\pgfplotsset{compat=1.9}
\newcommand{\exact}{{\star}}
\newcommand{\g}{\boldsymbol{g}}
\newcommand\mapsfrom{\mathrel{\reflectbox{\ensuremath{\mapsto}}}}
\def\hook{\hookrightarrow}
\def\CGNS{C_{\mathrm{GNS}}}
\def\kmax{{\underline{k}}}
\def\kmaxl{{\underline{k}(\ell)}}
\def\kMAXL{\underline{k}(\ell)}
\def\kk{{\underline{k}}}
\def\lmax{{\underline{\ell}}}
\def\Cest{C_{\mathrm{est}}}
\def\qest{q_{\mathrm{est}}}
\def\qN{q_{\mathrm{N}}}
\def\qE{q_{\mathcal{E}}}
\let\div\relax
\DeclareMathOperator{\div}{div}
\def\coarse{H}
\def\fine{h}
\def\k{{\underline{k}}}
\let\div\relax
\DeclareMathOperator{\div}{div}
\def\opt{{\rm opt}}
\def\lambdaopt{\lambda_{\rm opt}}
\def\A{\boldsymbol{A}}
\def\f{\boldsymbol{f}}
\def\n{\boldsymbol{n}}
\newcommand{\dist}{\text{{\usefont{U}{DSSerif}{m}{n}d}}}
\newcommand{\bbA}{\mathbb{A}}
\title[AILFEM for semilinear PDEs]{Cost-optimal adaptive iterative linearized FEM\\ for semilinear elliptic PDEs}
\author{Roland Becker}
\address{Université de Pau et des Pays de l’Adour, IPRA-LMAP, Avenue de l’Université BP 1155, 64013 PAU Cedex, France}
\email{roland.becker@univ-pau.fr}
\address{TU Wien, Institute of Analysis and Scientific Computing, Wiedner Hauptstr. 8--10/E101/4, 1040 Vienna, Austria}
\author{Maximilian Brunner}
\email{maximilian.brunner@asc.tuwien.ac.at \quad \rm (corresponding author)}
\author{Michael Innerberger}
\email{michael.innerberger@asc.tuwien.ac.at}%
\author{Jens Markus Melenk}
\email{melenk@asc.tuwien.ac.at}%
\author{Dirk Praetorius}
\email{dirk.praetorius@asc.tuwien.ac.at}
\keywords{adaptive iterative linearized finite element method, semilinear PDEs, iterative solver, a~posteriori error estimation, convergence, optimal convergence rates, cost-optimality}
\subjclass[2010]{65N30, 65N50, 65N15, 65Y20, 41A25}
\thanks{The authors thankfully acknowledge support by the Austrian Science Fund (FWF) through the doctoral school \emph{Dissipation and dispersion in nonlinear PDEs} (grant W1245) and the stand-alone projects \emph{Computational nonlinear PDEs} (grant P33216) and \emph{Analysis of $\HH$-matrices} (grant
		P28367). Michael Innerberger, Jens Markus Melenk, and Dirk Praetorius are supported by the SFB \emph{Taming complexity in partial differential systems} (grant SFB F65). Additionally, Maximilian Brunner and Michael Innerberger are supported by the \emph{Vienna School of Mathematics}. The authors thank the anonymous reviewers for their constructive feedback which helped to improve results and presentation.}
\def\@seccntformat#1{\hspace*{4mm}%
  \protect\text{}{\protect\@secnumfont
    \ifnum\pdfstrcmp{subsection}{#1}=0 \bfseries\fi% subsection # in \bfseries
    \csname the#1\endcsname
    \protect\@secnumpunct
  }%
}
\begin{document}

%%%%%%%%%%%%%%%%%%%%%%%%%%%%%%%%%%%%%%%%%%%%%%%%%%%%%%%%%%%%%%%%%%%%%%%%%%%%%%%%%%%
%%%%%%%%%%%%%%%%%%%%%%%%%%%%%%%%%%%%%%%%%%%%%%%%%%%%%%%%%%%%%%%%%%%%%%%%%%%%%%%%%%%
%%%%%%%%%%%%%%%%%%%%%%%%%%%%%%%%
%%%%%%%%%%%%%%%%%%%%%%%%%%%%%%%%%%%%%%%%%%%%%%%%%%%%%%%%%%%%%%%%%%%%%%%%%%%%%%%%%%%

\maketitle
\thispagestyle{fancy}

\begin{abstract}
	We consider scalar semilinear elliptic PDEs where the nonlinearity is strongly monotone, but only locally Lipschitz continuous. We formulate an adaptive iterative linearized finite element method (AILFEM) which steers the local mesh refinement as well as the iterative linearization of the arising nonlinear discrete equations. To this end, we employ a damped Zarantonello iteration so that, in each step of the algorithm, only a linear Poisson-type equation has to be solved. We prove that the proposed AILFEM strategy guarantees convergence with optimal rates, where rates are understood with respect to the overall computational complexity (i.e., the computational time). Moreover, we formulate and test an adaptive algorithm where also the damping parameter of the Zarantonello iteration is adaptively adjusted. Numerical experiments underline the theoretical findings.
\end{abstract}%%%%%%%%%%%%%%%%%%%%%%%%%%%%%%%%%%%%%%%%%%%%%%%%%%%

%%%%%%%%%%%%%%%%%%%%%%%%%%%%%%%%%%%%%%%%%%%%%%%%%%%%%%%%%%%%%%%%%%%%%%%%%%%%%%%%%%%
%%%%%%%%%%%%%%%%%%%%%%%%%%%%%%%%%%%%%%%%%%%%%%%%%%%%%%%%%%%%%%%%%%%%%%%%%%%%%%%%%%%
%%%%%%%%%%%%%%%%%%%%%%%%%%%%%%%%%%%%%%%%%%%%%%%%%%%%%%%%%%%%%%%%%%%%%%%%%%%%%%%%%%%

%!TEX root = ailfem_semilinear.tex

%%%%%%%%%%%%%%%%%%%%%%%%%%%%%%%%%%%%%%%%%%%%%%%%%%%%%%%%%%%%%%%%
%%%%%%%%%%%%%%%%%%%%%%%%%%%%%%%%%%%%%%%%%%%%%%%%%%%%%%%%%%%%%%%%
\section{Introduction}
%%%%%%%%%%%%%%%%%%%%%%%%%%%%%%%%%%%%%%%%%%%%%%%%%%%%%%%%%%%%%%%%
%%%%%%%%%%%%%%%%%%%%%%%%%%%%%%%%%%%%%%%%%%%%%%%%%%%%%%%%%%%%%%%%

%%%%%%%%%%%%%%%%%%%%%%%%%%%%%%%%%%%%%%%%%%%%%%%%%%%%%%%%%%%%%%%%
\subsection{State of the art}
%%%%%%%%%%%%%%%%%%%%%%%%%%%%%%%%%%%%%%%%%%%%%%%%%%%%%%%%%%%%%%%%

Cost-optimal computation of a discrete solution with an error below a given tolerance is the prime aim of any numerical method. 
Since convergence of numerical schemes is usually (but not necessarily) spoiled by singularities of the (given) data or the (unknown) solution, \textsl{a~posteriori} error estimation and adaptive mesh refinement schemes are pivotal to reliable and efficient numerical approximation.
This is the foundation of  adaptive finite element methods (AFEM), for which the mathematical understanding of convergence and optimality is fairly mature; we refer to~\cite{bv1984, doerfler1996, mns2000, bdd2004, stevenson2007, msv2008, ckns2008, ks2011, cn2012, ffp2014} for linear elliptic equations, to~\cite{veeser2002, dk2008, bdk2012, gmz2012, ghps2018} for certain quasi-linear PDEs, and to~\cite{axioms} for an overview of available results on rate-optimal AFEM. 

In particular, for nonlinear PDEs, the arising discrete equations must be solved iteratively. The interplay of adaptive mesh refinement and iterative solvers has been treated extensively in the literature; we refer, e.g., to~\cite{stevenson2007, bms2010, agl2013, MR3095916} for algebraic solvers for linear PDEs, to~\cite{aev2011, gmz2011,   aw2015, hw2018, ghps2018, hw2020:convergence, hw2020:ailfem} for the iterative linearization of nonlinear PDEs, and to~\cite{ev2013, hpsv2021} for fully adaptive schemes including linearization and algebraic solver. For the latter works, the consideration is usually restricted to the class of strongly monotone and globally Lipschitz continuous nonlinearities; see~\cite{gmz2011} for the first plain convergence result,~\cite{hw2020:convergence} for an abstract framework for plain convergence of adaptive iteratively linearized finite element methods (AILFEM), \cite{ghps2018,ghps2021} for rate-optimality of AILFEM based on the Zarantonello iteration (as proposed in~\cite{cw2017}), and \cite{hpw2021} for rate-optimality for other linearization strategies including the Ka\v{c}anov iteration as well as the damped Newton method. In particular, we note that~\cite{ghps2021, hpw2021, hpsv2021} prove optimal convergence rates with respect to the overall computational cost.
For more general nonlinear operators, optimal convergences rates are empirically observed (e.g.,\allowbreak~\cite{ev2013}), but the quest for a sound mathematical analysis is still ongoing.

%%%%%%%%%%%%%%%%%%%%%%%%%%%%%%%%%%%%%%%%%%%%%%%%%%%%%%%%%%%%%%%%
\subsection{Contributions of the present work}
%%%%%%%%%%%%%%%%%%%%%%%%%%%%%%%%%%%%%%%%%%%%%%%%%%%%%%%%%%%%%%%%

We prove optimal convergence of AILFEM for strongly monotone, but only locally Lipschitz continuous operators, where our interest stems from the treatment of semilinear elliptic PDEs. For $d \in \{1, 2, 3\}$ and a bounded Lipschitz domain $\Omega \subset \R^d$, our model problem reads: Find the (unique) solution $u^\exact \in H^1_0(\Omega)$ to the (scalar) semilinear elliptic PDE
\begin{align}\label{eq:strongform:primal}
	-\div(\A \nabla u^\exact) + b(u^\exact) = f - \div \f
	\text{ \ in } \Omega
	\quad \text{subject to} \quad 
	u^\exact = 0 \text{ \ on }  \partial \Omega,
\end{align}
where we refer to Section~\ref{section:modelproblem} for a discussion of the precise assumptions on the diffusion matrix $\A$, the semilinearity $b$, and the given data $f$ and $\f$. The presented AILFEM algorithm employs the Zarantonello linearization with a damping parameter $\delta>0$, requiring only to solve a \emph{linear} Poisson-type problem in each linearization step. The AILFEM algorithm takes the form
\vspace{0.5\baselineskip}
\begin{center}
\begin{tikzpicture}
	\tikzstyle{afemnode} = [draw, very thick, color=lightgray, text=black, minimum width=5em, rounded corners]
	\tikzstyle{afemarrow} = [very thick, color=lightgray, -stealth]
	\tikzstyle{dummynode} = [draw=none]
	
	\node[afemnode] (S) at (0,0) {\textsc{Iteratively Solve and Estimate}};
	\node[afemnode, right=3em of S] (M) {\textsc{Mark}};
	\node[afemnode, right=3em of M] (R) {\textsc{Refine}};
	\coordinate[above=1.5em of S] (D1);
	\coordinate[above=1.5em of R] (D2);
	
	\draw[afemarrow] (S) -- (M);
	\draw[afemarrow] (M) -- (R);
	
	\draw[afemarrow, rounded corners] (R) -- (D2) -- (D1) -- (S);
\end{tikzpicture}
\vspace{0.5\baselineskip} 
\end{center}
where the first step represents an inner loop of the Zarantonello iteration and error estimation by a residual \textsl{a posteriori} error estimator. This inner loop is stopped when the linearization error (measured in terms of the energy difference of discrete Zarantonello iterates) is small with respect to the discretization error (measured in terms of the error estimator). However, since the PDE operator is only locally Lipschitz continuous, the stopping criterion must be slightly extended when compared to that of~\cite{hw2020:convergence, ghps2021, hpw2021} for globally Lipschitz continuous operators. As usual in this context, we employ the D\"orfler marking to single out elements for refinement, and mesh refinement relies on newest vertex bisection. 

We prove that the solver iterates are uniformly bounded, provided that the Zarantonello parameter $\delta$ is chosen appropriately (Corollary~\ref{cor:crucial}). For arbitrary adaptivity parameters ($\theta$ for marking and $\lambda$ for stopping the Zarantonello iteration), we then prove \emph{full} linear convergence (Theorem~\ref{theorem:fulllinear}), i.e., linear convergence regardless of the algorithmic decision for yet another solver step or mesh refinement. For sufficiently small marking parameters, this even guarantees 
\emph{rate-optimality with respect to the number of degrees of freedom} (Theorem~\ref{theorem:rates}) and \emph{cost-optimality}, i.e., rate-optimality with respect to the overall computational cost (Corollary~\ref{cor:cost}).

%%%%%%%%%%%%%%%%%%%%%%%%%%%%%%%%%%%%%%%%%%%%%%%%%%%%%%%%%%
\subsection{Outline}
%%%%%%%%%%%%%%%%%%%%%%%%%%%%%%%%%%%%%%%%%%%%%%%%%%%%%%%%%%

This work is organized as follows: In Section~\ref{section:monlip}, we present our adaptive iterative linearized finite element method~(Algorithm~\ref{algorithm:idealized}) and the details of its individual steps. This includes the discussion of the abstract Hilbert space setting, the precise assumptions for the iterative solver, and a discussion of the extended stopping criterion. Finally, we prove full linear convergence of the proposed AILFEM algorithm (Theorem~\ref{theorem:fulllinear}) and optimal rates both with respect to the degrees of freedom (Theorem~\ref{theorem:rates}) as well as the overall computational cost (Corollary~\ref{cor:cost}).
	In Section~\ref{section:modelproblem}, we introduce and discuss semilinear elliptic PDEs, which fit into the abstract framework of Section~\ref{section:monlip}. 	
	Section~\ref{section:practical_algorithm} presents a practical extension of our AILFEM strategy (Algorithm~\ref{algorithm:practical}), which includes the adaptive choice of the Zarantonello damping parameter $\delta$. In Section~\ref{section:numerical}, we support our theoretical findings with numerical experiments. 
	Finally, Appendix~\ref{section:appendix} concludes the work by providing additional material, which allows us to apply the abstract setting to a wider range of problems like non-scalar semilinear PDEs.
	
%%%%%%%%%%%%%%%%%%%%%%%%%%%%%%%%%%%%%%%%%%%%%%%%%%%%%%%%%%%%%%%%
\subsection{General notation}
%%%%%%%%%%%%%%%%%%%%%%%%%%%%%%%%%%%%%%%%%%%%%%%%%%%%%%%%%%%%%%%%

%Moreover, 
Without ambiguity, we use $| \, \cdot \, |$ to denote the absolute value $|\lambda|$ of a scalar $\lambda \in \R$, the Euclidean norm $|x|$ of a vector $x \in \R^d$, and the Lebesgue measure $|\omega|$ of a set $\omega \subseteq \R^d$, depending on the respective context. Furthermore, $\# \UU$ denotes the cardinality of a finite set $\UU$.

%\clearpage
%!TEX root = ailfem_semilinear.tex

%%%%%%%%%%%%%%%%%%%%%%%%%%%%%%%%%%%%%%%%%%%%%%%%%%%%%%%%%%%%%%%%
\section{Strongly monotone operators} 
\label{section:monlip}

In this section, we present the mathematical heart of our analysis, which will later be applied to strongly monotone semilinear PDEs.
\subsection{Abstract model problem} \label{subsection:modelproblem}
%%%%%%%%%%%%%%%%%%%%%%%%%%%%%%%%%%%%%%%%%%%%%%%%%%%%%%%%%%%%%%%%

Let $\XX$ be a Hilbert space over $\R$ with scalar product $\sprod{\cdot}{\cdot}$ and induced norm $\enorm{\, \cdot \,}$. Let $\XX_\coarse \subseteq \XX$ be a closed subspace. Let $\XX'$ be the dual space with norm $\norm{\cdot }{\XX'}$ and denote by $\dual{\cdot}{\cdot}$ the duality bracket on $\XX' \times \XX$. Let $\AA \colon \XX \to \XX'$ be a nonlinear operator. We suppose that $\AA$ is {\bf strongly monotone}, i.e., there exists $\alpha > 0$ such that
\begin{align}\label{eq:strongly-monotone}
 \alpha \, \enorm{v - w}^2 \le \dual{\AA v - \AA w}{v - w}%
 \quad \text{for all } v, w \in \XX. \tag{SM}
\end{align}
Moreover, we suppose that $\AA$ is {\bf locally Lipschitz continuous}, i.e., for all $\vartheta > 0$, there exists $L[\vartheta] >0$ such that
\begin{align}\label{eq:locally-lipschitz}
\! \! \!\dual{\AA v \!-\! \AA w\!}{\!\varphi} \! \le \!  L[\vartheta] \enorm{v\! -\! w} \enorm{\varphi}
 \text{ for all } v, w, \varphi \in \XX 
\! \text{ with }\! \max\big\{ \enorm{v}, \enorm{v \!-\! w}\big\} \!\le \! \vartheta. \tag{LIP}
\end{align}
\begin{remark}
\cite[p.~565]{zeidler} defines local Lipschitz continuity as follows: For all $\Theta > 0$, there exists $L'[\Theta] >0$ such that
\begin{align}\label{eq:locally-lipschitz:variant}
	\! \! \!\dual{\AA v \!-\! \AA w\!}{\!\varphi} \! \le \! L'[\Theta] \enorm{v \! -\! w} \enorm{\varphi}
	\text{ for all } v, w, \varphi \in \XX 
	\text{ with } \max\big\{ \enorm{v}, \enorm{w}\big\} \! \le \! \Theta.
\end{align} 
Conditions~\eqref{eq:locally-lipschitz} and~\eqref{eq:locally-lipschitz:variant} are indeed equivalent in the sense that
\begin{align*}
	\max\big\{ \enorm{v}, \enorm{w}\big\} & \le \max\big\{ \enorm{v}, \enorm{v- w} + \enorm{v}\big\} \le 2 \,\vartheta, \\
	\max\big\{ \enorm{v}, \enorm{v- w} \big\}  &\le  \max \big\{\enorm{v}, \enorm{v} + \enorm{w}\big\} \le 2\, \Theta.
\end{align*}
However,~\eqref{eq:locally-lipschitz} is better suited for the inductive structure in the proof of~Corollary~\ref{cor1:zarantonello}. 
	\end{remark}
Without loss of generality, we may suppose that $\AA0 \neq F \in \XX'$. We consider the operator equation
\begin{align}\label{eq:weakform}
 \AA u^\exact = F.
\end{align}
For any closed subspace $\XX_\coarse \subseteq \XX$, we consider the corresponding Galerkin discretization
\begin{align}\label{eq:weakform:discrete}
 \dual{\AA u_\coarse^\exact}{v_\coarse} = \dual{F}{v_\coarse}
 \quad \text{ for all } v_\coarse \in \XX_\coarse.
\end{align}
We observe that the setting of strongly montone and locally Lipschitz operators yields existence and uniqueness of the solutions to~\eqref{eq:weakform}--\eqref{eq:weakform:discrete} as well as a C\'{e}a-type estimate.

\begin{proposition}\label{prop:existence}
	Suppose that $\AA$ satisfies~\eqref{eq:strongly-monotone} and~\eqref{eq:locally-lipschitz}. Then,~\eqref{eq:weakform}--\eqref{eq:weakform:discrete} admit unique solutions $u^\exact \in \XX$ and $u_\coarse^\exact \in \XX_\coarse$, respectively, and it holds that
\begin{align}\label{eq:exact:bounded}
 \max\big\{ \enorm{u^\exact}, \enorm{u^\exact_\coarse} \big\} \le M \coloneqq \frac{1}{\alpha} \, \norm{F - \AA 0}{\XX'} \neq 0
\end{align}
as well as
\begin{align}\label{eq:lemma:cea}
	\enorm{u^\exact - u_\coarse^\exact} \le \Ccea \min_{v_\coarse \in \XX_\coarse}  \enorm{u^\exact -v_\coarse} \quad \text{ with } \quad \Ccea = L[2M]/\alpha.
\end{align}% \vspace{-1em}
\end{proposition}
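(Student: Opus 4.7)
The plan is to split the proposition into four ingredients --- uniqueness, the a priori bound~\eqref{eq:exact:bounded}, existence, and the C\'{e}a estimate~\eqref{eq:lemma:cea} --- and to handle them in that order, since both the bound and C\'{e}a presuppose that the solutions already exist. Uniqueness is an immediate consequence of strong monotonicity: if $u_1, u_2$ are two solutions of~\eqref{eq:weakform} (or of~\eqref{eq:weakform:discrete}), then~\eqref{eq:strongly-monotone} with $v = u_1$, $w = u_2$ gives $\alpha \enorm{u_1 - u_2}^2 \le \dual{\AA u_1 - \AA u_2}{u_1 - u_2} = 0$. Likewise, testing~\eqref{eq:strongly-monotone} with $v = u^\exact$ and $w = 0$ and using $\AA u^\exact = F$ yields
\begin{align*}
\alpha \enorm{u^\exact}^2 \le \dual{F - \AA 0}{u^\exact} \le \norm{F - \AA 0}{\XX'} \enorm{u^\exact},
\end{align*}
which is~\eqref{eq:exact:bounded} for $u^\exact$; the same computation works verbatim on $\XX_\coarse$ because $0 \in \XX_\coarse$.

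The main obstacle is existence, since~\eqref{eq:locally-lipschitz} is too weak to allow a Banach fixed-point argument on all of $\XX$. I would invoke the Browder--Minty theorem for coercive, strongly monotone, hemicontinuous operators on a reflexive space: strong monotonicity~\eqref{eq:strongly-monotone} supplies both monotonicity and coercivity, while~\eqref{eq:locally-lipschitz} trivially implies hemicontinuity, so the theorem delivers $u^\exact \in \XX$. Applied to the closed subspace $\XX_\coarse$ --- which is itself a Hilbert space and on which the restricted operator inherits~\eqref{eq:strongly-monotone}--\eqref{eq:locally-lipschitz} with the same constants --- it also yields $u_\coarse^\exact$. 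A more self-contained alternative that foreshadows the Zarantonello analysis used later in the paper is a damped fixed-point iteration starting from $0$: thanks to the a priori bound just established, the iterates can be kept in the ball of radius $2M$ on which $\AA$ is genuinely Lipschitz with constant $L[2M]$, and a sufficiently small damping parameter turns the iteration into a contraction.

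For the C\'{e}a estimate, I would exploit the Galerkin orthogonality $\dual{\AA u^\exact - \AA u_\coarse^\exact}{v_\coarse} = 0$ for all $v_\coarse \in \XX_\coarse$ inherited from~\eqref{eq:weakform}--\eqref{eq:weakform:discrete}. Combining~\eqref{eq:strongly-monotone} (with $v = u^\exact$, $w = u_\coarse^\exact$) with this orthogonality gives, for arbitrary $v_\coarse \in \XX_\coarse$,
\begin{align*}
\alpha \enorm{u^\exact - u_\coarse^\exact}^2 \le \dual{\AA u^\exact - \AA u_\coarse^\exact}{u^\exact - u_\coarse^\exact} = \dual{\AA u^\exact - \AA u_\coarse^\exact}{u^\exact - v_\coarse}.
\end{align*}
The decisive observation is that~\eqref{eq:exact:bounded} and the triangle inequality yield $\enorm{u^\exact} \le M$ and $\enorm{u^\exact - u_\coarse^\exact} \le 2M$, so that~\eqref{eq:locally-lipschitz} with the choice $\vartheta = 2M$ bounds the right-hand side by $L[2M]\enorm{u^\exact - u_\coarse^\exact}\enorm{u^\exact - v_\coarse}$. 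This is precisely the situation for which the particular form~\eqref{eq:locally-lipschitz} of local Lipschitz continuity was tailored. Dividing through by $\enorm{u^\exact - u_\coarse^\exact}$ and minimising over $v_\coarse \in \XX_\coarse$ produces~\eqref{eq:lemma:cea} with the advertised constant $\Ccea = L[2M]/\alpha$.
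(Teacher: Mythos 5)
Your proposal is correct and follows essentially the same route as the paper: uniqueness and the a~priori bound from strong monotonicity with the zero test function, existence from Browder--Minty applied separately on $\XX$ and on the closed (hence Hilbert) subspace $\XX_\coarse$, and the C\'{e}a estimate from Galerkin orthogonality plus strong monotonicity plus~\eqref{eq:locally-lipschitz} with $\vartheta = 2M$, which is admissible because $\enorm{u^\exact} \le M$ and $\enorm{u^\exact - u^\exact_\coarse} \le 2M$. The only difference is cosmetic ordering (the paper does existence first, you state the a~priori bound as a conditional estimate before invoking Browder--Minty), and your aside about a damped-iteration alternative is a pleasant but unnecessary digression.
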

\begin{proof}
Since $\AA$ is (even locally Lipschitz) continuous, existence of $u_\coarse^\exact$ follows from the Browder--Minty theorem on monotone operators~\cite[Theorem~26.A]{zeidler}. Uniqueness of $u_\coarse^\exact$ follows from strong monotonicity, since any two solutions $u_\coarse^\exact, u_\coarse \in \XX_\coarse$ to~\eqref{eq:weakform:discrete} satisfy
\begin{align*}
 \alpha \, \enorm{u_\coarse^\exact - u_\coarse}^2 
 \eqreff{eq:strongly-monotone}\le 
 \dual{\AA u_\coarse^\exact - \AA u_\coarse}{u_\coarse^\exact - u_\coarse}\eqreff{eq:weakform:discrete}= 0
\end{align*}
and hence $u_\coarse^\exact = u_\coarse$. Boundedness~\eqref{eq:exact:bounded} follows from
\begin{align*}
 \alpha \, \enorm{u_\coarse^\exact}^2 
 \eqreff{eq:strongly-monotone}\le 
 \dual{\AA u_\coarse^\exact - \AA 0}{u_\coarse^\exact}
 = \dual{F - \AA 0}{u_\coarse^\exact}
 \le \norm{F - \AA 0}{\XX'} \enorm{u_\coarse^\exact}.
\end{align*}
Since~\eqref{eq:weakform} is equivalent to~\eqref{eq:weakform:discrete} with $\XX = \XX_\coarse$, the foregoing results also cover $u^\exact \in \XX$. This concludes the proof of~\eqref{eq:exact:bounded}. To see the C\'{e}a-type estimate~\eqref{eq:lemma:cea}, recall the Galerkin orthogonality
\begin{align}\label{eq:aux:gal-ortho}
	\dual{\AA u^\exact -\AA u^\exact_\coarse}{v_\coarse} = 0 \quad \text{ for all } v_\coarse \in \XX_\coarse.
\end{align}
For $v_\coarse \in \XX_\coarse$, standard reasoning leads us to
\begin{align*}
	\alpha \, \enorm{u^\exact- u^\exact_\coarse}^2 \, \,\,  \stackrel{\mathclap{\eqref{eq:strongly-monotone}}}{\le}  \, \,\prod{\AA u^\exact -\AA u^\exact_\coarse}{u^\exact-u^\exact_\coarse} &\stackrel{\mathclap{\eqref{eq:aux:gal-ortho}}}{=} \prod{\AA u^\exact -\AA u^\exact_\coarse}{u^\exact-v_\coarse} \\
	& \stackrel{\mathclap{\eqref{eq:locally-lipschitz}}}{\le}  \, \, L[2M] \,\enorm{u^\exact-u^\exact_\coarse} \enorm{u^\exact-v_\coarse}.
\end{align*}
Rearranging the last estimate, we prove~\eqref{eq:lemma:cea}, where the minimum is attained since $\XX_\coarse$ is closed. This concludes the proof. 
\end{proof}

Finally, we suppose that the operator $\AA$ possesses a potential $\PP$: there exists a G\^{a}teaux differentiable function $\PP\colon \XX \to \R$ such that its derivative $\d{\PP}\colon \XX \to \XX'$ coincides with $\AA$, i.e., it holds that
	\begin{align}\label{eq:potential}
		\prod{\AA w}{v} = \prod{\d{\PP} (w)}{v} = \lim_{\substack{t \to 0 \\ t \in \R}}\, \frac{\PP(w+tv) - \PP(w)}{t} \quad \text{ for all } v, w\in \XX. \tag{POT}
	\end{align}
We define the energy $\EE (v) \coloneqq (\PP-F) v$, where $F$ is the right-hand side from~\eqref{eq:weakform}. 

Note that the energy $\EE$ trivially satisfies that
\begin{align}\label{eq:qo:abstract}
	\EE(v_{\coarse}) - \EE(u^\exact) = 	\big[\EE(v_{\coarse}) - \EE(u^\exact_{\coarse})\big] +\big[\EE(u^\exact_{\coarse}) - \EE(u^\exact)\big] \quad \text{ for all } v_\coarse \in \XX_\coarse
\end{align}
and all these energy differences are non-negative; see~\eqref{eq:emin}. 

Moreover, assumption~\eqref{eq:potential} admits the following classical equivalence:
\begin{lemma}[\phantom{}{see, e.g.,~\cite[Lemma~5.1]{ghps2018}}]\label{lemma:equivalence}
	Suppose that $\AA$ satisfies~\eqref{eq:strongly-monotone}, \eqref{eq:locally-lipschitz}, and~\eqref{eq:potential}. Let $\vartheta \ge M$. Let $v_{\coarse} \in \XX_{\coarse}$ with $\enorm{ v_\coarse - u^\exact_{\coarse}} \le \vartheta$. Then, it holds that
	\begin{align}\label{eq:equivalence}
		\frac{\alpha}{2} \, \enorm{v_{\coarse} - u^\exact_{\coarse}}^2 \le  \EE(v_{\coarse}) - \EE(u^\exact_{\coarse}) \le  \frac{L[\vartheta]}{2}\, \enorm{v_{\coarse} - u^\exact_{\coarse}}^2.
	\end{align}
	In particular, the solution $u^\exact_\coarse$ of~\eqref{eq:weakform:discrete} is indeed the unique minimizer of $\EE$ in $\XX_\coarse$, i.e.,
	\begin{align}\label{eq:emin}
		\EE(u^\exact_\coarse) \le \EE(v_\coarse) \quad \text{ for all } v_\coarse \in \XX_\coarse,
	\end{align}
	and, therefore,~\eqref{eq:weakform:discrete} can equivalently be reformulated as an energy minimization problem:
	\begin{align} \tag*{\qed}
		\text{ Find } \quad u^\exact_{\coarse} \in \XX_{\coarse} \quad  \text{ such that } \quad \EE(u^\exact_{\coarse}) =\min_{v_{\coarse} \in \XX_{\coarse}} \EE(v_{\coarse}). 
	\end{align}
\end{lemma}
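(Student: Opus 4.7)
My plan is to follow the classical argument for convex potential operators: express the energy difference $\EE(v_\coarse) - \EE(u^\exact_\coarse)$ as a one-dimensional integral along the segment joining $u^\exact_\coarse$ to $v_\coarse$, and then sandwich the integrand via~\eqref{eq:strongly-monotone} and~\eqref{eq:locally-lipschitz}. Concretely, I would set $w_t := u^\exact_\coarse + t(v_\coarse - u^\exact_\coarse)$ for $t \in [0,1]$. Assumption~\eqref{eq:potential} makes $\PP$ differentiable along this segment, so the fundamental theorem of calculus gives $\PP(v_\coarse) - \PP(u^\exact_\coarse) = \int_0^1 \dual{\AA w_t}{v_\coarse - u^\exact_\coarse}\,dt$. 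Since $v_\coarse - u^\exact_\coarse \in \XX_\coarse$, the Galerkin identity~\eqref{eq:weakform:discrete} reduces this to
$$\EE(v_\coarse) - \EE(u^\exact_\coarse) = \int_0^1 \dual{\AA w_t - \AA u^\exact_\coarse}{v_\coarse - u^\exact_\coarse}\,dt.$$

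Next, I would sandwich the integrand. Using $w_t - u^\exact_\coarse = t(v_\coarse - u^\exact_\coarse)$, the integrand equals $t^{-1}\dual{\AA w_t - \AA u^\exact_\coarse}{w_t - u^\exact_\coarse}$, so~\eqref{eq:strongly-monotone} immediately yields the pointwise lower bound $\alpha\, t\,\enorm{v_\coarse - u^\exact_\coarse}^2$, integrating to $\tfrac{\alpha}{2}\enorm{v_\coarse - u^\exact_\coarse}^2$. For the upper bound, the delicate point is to orient~\eqref{eq:locally-lipschitz} correctly: I would take $v := u^\exact_\coarse$ and $w := w_t$, so that $\enorm{v} = \enorm{u^\exact_\coarse} \le M \le \vartheta$ by~\eqref{eq:exact:bounded} and $\enorm{v - w} = t\,\enorm{v_\coarse - u^\exact_\coarse} \le \vartheta$, which means the admissibility condition of~\eqref{eq:locally-lipschitz} is met with the given $\vartheta$ rather than with $2\vartheta$. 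Applying~\eqref{eq:locally-lipschitz} with test direction $-(v_\coarse - u^\exact_\coarse)$ to flip the sign then produces $\dual{\AA w_t - \AA u^\exact_\coarse}{v_\coarse - u^\exact_\coarse} \le L[\vartheta]\, t\, \enorm{v_\coarse - u^\exact_\coarse}^2$, integrating to the desired $\tfrac{L[\vartheta]}{2}\enorm{v_\coarse - u^\exact_\coarse}^2$.

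Finally, the minimization statement follows by applying the just-established lower bound to an \emph{arbitrary} $v_\coarse \in \XX_\coarse$ with the choice $\vartheta := \max\{M,\enorm{v_\coarse - u^\exact_\coarse}\}$, which immediately yields $\EE(v_\coarse) \ge \EE(u^\exact_\coarse) + \tfrac{\alpha}{2}\enorm{v_\coarse - u^\exact_\coarse}^2 \ge \EE(u^\exact_\coarse)$ with strict inequality unless $v_\coarse = u^\exact_\coarse$; uniqueness of the minimizer, and hence the equivalence with~\eqref{eq:weakform:discrete} (whose unique solvability is already guaranteed by Proposition~\ref{prop:existence}), is then immediate. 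I anticipate the only real obstacle to be the bookkeeping in the upper bound above: a naive orientation of~\eqref{eq:locally-lipschitz} (i.e., $v := w_t$, $w := u^\exact_\coarse$) would pay a factor $L[2\vartheta]$ instead of $L[\vartheta]$, which is precisely the reason the authors prefer the asymmetric form~\eqref{eq:locally-lipschitz} to the symmetric variant~\eqref{eq:locally-lipschitz:variant}, as highlighted in the remark preceding the lemma.
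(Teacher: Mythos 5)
Your proof is correct and proceeds by the standard line-integral argument for potential operators --- the same technique the paper itself uses in the proof of Lemma~\ref{lemma:f4} --- with the key bookkeeping being that orienting~\eqref{eq:locally-lipschitz} with $v := u^\exact_\coarse$ (so that $\enorm{v} \le M \le \vartheta$ by~\eqref{eq:exact:bounded}) and $w := w_t$ (so that $\enorm{v-w} = t\,\enorm{v_\coarse - u^\exact_\coarse} \le \vartheta$) yields the sharp constant $L[\vartheta]$ rather than $L[2\vartheta]$. The paper gives no explicit proof here and only cites~\cite[Lemma~5.1]{ghps2018}, which treats the globally Lipschitz case where this orientation issue does not arise, so there is nothing substantive to compare against; your adaptation to the locally Lipschitz setting is the natural and correct one.
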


%%%%%%%%%%%%%%%%%%%%%%%%%%%%%%%%%%%%%%%%%%%%%%%%%%%%%%%%%%%%%%%%
\subsection{Zarantonello iteration}
\label{subsection:zarantonello}
%%%%%%%%%%%%%%%%%%%%%%%%%%%%%%%%%%%%%%%%%%%%%%%%%%%%%%%%%%%%%%%%
Let $\XX_\coarse \subseteq \XX$ be a closed subspace. For given damping parameter $\delta > 0$, we define the Zarantonello mapping $\Phi_H(\delta; \cdot)\colon \XX_\coarse \to \XX_\coarse$ by
\begin{align}\label{eq:zarantonello}
	\sprod{\Phi_\coarse(\delta; w_\coarse)}{v_\coarse}
	= \sprod{w_\coarse}{v_\coarse} + \delta \, \dual{F - \AA w_\coarse}{v_\coarse}
	\quad \text{for all } v_\coarse \in \XX_\coarse.
\end{align}
Clearly, existence and uniqueness of $\Phi_\coarse(\delta; w_\coarse) \in \XX_\coarse$ and hence well-posedness of $\Phi_\coarse(\delta; \cdot)$ follows from the Riesz theorem.
The following two estimates are obvious: first,
\begin{align}\label{eq:zarantonello:iterate}
	\enorm{\Phi_\coarse(\delta; w_\coarse) - w_\coarse}
	\le \delta \, \norm{F - \AA w_\coarse}{\XX'} = \delta \mkern-9mu\sup_{v \in \XX \setminus \{0\}} \frac{\prod{F-\AA w_\coarse}{v}}{\enorm{v}}
	\text{ for all } w_\coarse \in \XX_\coarse;
\end{align}
second,
\begin{align}\label{eq:zarantonello:lipschitz}
	\enorm{\Phi_\coarse(\delta; v_\coarse) \!-\! \Phi_\coarse(\delta; w_\coarse)} 
	\le \enorm{v_\coarse \!-\! w_\coarse} + \delta \, \norm{\AA v_\coarse \!-\! \AA w_\coarse}{\XX'}
	\, \text{ for all }  v_\coarse, w_\coarse \in \XX_\coarse.
\end{align}
Due to the local Lipschitz continuity~\eqref{eq:locally-lipschitz} of $\AA$, this proves that also $\Phi_\coarse(\delta; \cdot)$ is locally Lipschitz continuous.
By definition, $u_\coarse^\exact \in \XX_\coarse$ solves~\eqref{eq:weakform:discrete} if and only it is a fixed point of $\Phi_\coarse(\delta; \cdot)$, i.e., $u_\coarse^\exact = \Phi_\coarse(\delta; u_\coarse^\exact)$. 

\subsection{Zarantonello iteration and norm contraction} Let $\XX_\coarse \subseteq \XX$ be a closed subspace. 
The next proposition~\cite[Section 25.4]{zeidler} proves local contraction of $\Phi_\coarse(\delta;\cdot)$ with respect to the energy norm. For the convenience of the reader, we include the proof to highlight that local Lipschitz continuity suffices.
\begin{proposition}[norm contraction]\label{prop:zarantonello}
Suppose that $\AA$ satisfies~\eqref{eq:strongly-monotone} and~\eqref{eq:locally-lipschitz}. Let $\vartheta > 0$ and $v_\coarse, w_\coarse \in \XX_\coarse$ with $\max \big\{ \enorm{v_\coarse}, \enorm{v_\coarse-w_\coarse} \big\} \le \vartheta$. Then, for all $0 < \delta < 2\alpha / L[\vartheta]^2$ and $0 < \qN[\delta]^2 \coloneqq 1 - \delta(2\alpha - \delta L[\vartheta]^2) < 1$, it holds that
	\begin{align}\label{eq:prop:zarantonello}
		\enorm{\Phi_\coarse(\delta; v_\coarse) - \Phi_\coarse(\delta; w_\coarse)} 
		\le \qN[\delta] \, \enorm{v_\coarse - w_\coarse}.
	\end{align}
	We note that $\qN[\delta] \to 1$ as $\delta \to 0$. Moreover, for known $\alpha$ and $L[\vartheta]$, the contraction constant $\qN[\delta]^2 = 1-\alpha^2/L[\vartheta]^2 = 1- \alpha \, \delta $ is minimal and only attained for $\delta = \alpha/L[\vartheta]^2$.
\end{proposition}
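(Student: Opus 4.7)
The plan is to compute $\enorm{\Phi_\coarse(\delta; v_\coarse) - \Phi_\coarse(\delta; w_\coarse)}^2$ directly from the defining relation~\eqref{eq:zarantonello}. Subtracting the Zarantonello identities for $v_\coarse$ and $w_\coarse$ gives, for every $\varphi_\coarse \in \XX_\coarse$,
\begin{equation*}
\sprod{\Phi_\coarse(\delta; v_\coarse) - \Phi_\coarse(\delta; w_\coarse)}{\varphi_\coarse} = \sprod{v_\coarse - w_\coarse}{\varphi_\coarse} - \delta\,\dual{\AA v_\coarse - \AA w_\coarse}{\varphi_\coarse}.
\end{equation*}
Introducing the Riesz representative $z_\coarse \in \XX_\coarse$ of the bounded linear functional $\varphi_\coarse \mapsto \dual{\AA v_\coarse - \AA w_\coarse}{\varphi_\coarse}$ on $\XX_\coarse$, this is equivalent to the identity $\Phi_\coarse(\delta; v_\coarse) - \Phi_\coarse(\delta; w_\coarse) = (v_\coarse - w_\coarse) - \delta z_\coarse$ in $\XX_\coarse$.

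The next step is to expand the squared norm of the right-hand side:
\begin{equation*}
\enorm{\Phi_\coarse(\delta; v_\coarse) - \Phi_\coarse(\delta; w_\coarse)}^2 = \enorm{v_\coarse - w_\coarse}^2 - 2\delta\,\sprod{v_\coarse - w_\coarse}{z_\coarse} + \delta^2\,\enorm{z_\coarse}^2.
\end{equation*}
For the cross term, I observe $\sprod{v_\coarse - w_\coarse}{z_\coarse} = \dual{\AA v_\coarse - \AA w_\coarse}{v_\coarse - w_\coarse}$ and apply strong monotonicity~\eqref{eq:strongly-monotone} to get the lower bound $\alpha\,\enorm{v_\coarse - w_\coarse}^2$. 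For the last term, I use the Riesz identity $\enorm{z_\coarse} = \sup_{\varphi_\coarse \in \XX_\coarse\setminus\{0\}} \dual{\AA v_\coarse - \AA w_\coarse}{\varphi_\coarse}/\enorm{\varphi_\coarse}$ together with the local Lipschitz hypothesis~\eqref{eq:locally-lipschitz}; this is precisely where the assumption $\max\{\enorm{v_\coarse}, \enorm{v_\coarse - w_\coarse}\} \le \vartheta$ enters, since it matches exactly the constraint under which~\eqref{eq:locally-lipschitz} applies (with test function $\varphi_\coarse \in \XX_\coarse \subseteq \XX$). This yields $\enorm{z_\coarse} \le L[\vartheta]\,\enorm{v_\coarse - w_\coarse}$. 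Combining these bounds gives
\begin{equation*}
\enorm{\Phi_\coarse(\delta; v_\coarse) - \Phi_\coarse(\delta; w_\coarse)}^2 \le \bigl[1 - \delta(2\alpha - \delta L[\vartheta]^2)\bigr]\,\enorm{v_\coarse - w_\coarse}^2,
\end{equation*}
which is~\eqref{eq:prop:zarantonello}. The admissible range $0 < \delta < 2\alpha/L[\vartheta]^2$ is exactly the set on which $2\alpha - \delta L[\vartheta]^2 > 0$, so that $\qN[\delta]^2 < 1$. To see $\qN[\delta]^2 > 0$, I would note $\delta(2\alpha - \delta L[\vartheta]^2) \le \alpha^2/L[\vartheta]^2 \le 1$, where the final inequality follows from the elementary fact that strong monotonicity forces $\alpha \le L[\vartheta]$ (test~\eqref{eq:strongly-monotone} and~\eqref{eq:locally-lipschitz} against $v = 0$ and $w = \lambda\phi$ for sufficiently small $\lambda$).

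The concluding remarks are routine. The limit $\qN[\delta] \to 1$ as $\delta \to 0$ is immediate from the formula. For optimality, the quadratic $\delta \mapsto 1 - 2\alpha\delta + L[\vartheta]^2\delta^2$ attains its minimum at $\delta^* = \alpha/L[\vartheta]^2$, and the minimum value is $1 - \alpha^2/L[\vartheta]^2 = 1 - \alpha\delta^*$. I do not expect any serious obstacle: the argument is the classical Zarantonello contraction proof, and the only delicate point is the bookkeeping needed to ensure that all intermediate vectors lie within the radius $\vartheta$ dictated by local Lipschitz continuity, which is already built into the hypotheses of the proposition.
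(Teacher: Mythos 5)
Your argument is correct and coincides with the paper's proof: both subtract the Zarantonello identities, pass to the Riesz representative of $\AA v_\coarse - \AA w_\coarse$ in $\XX_\coarse$ (the paper writes this as $I_\coarse^{-1}(\AA v_\coarse - \AA w_\coarse)$, you call it $z_\coarse$), expand the squared energy norm, bound the cross term below by $\alpha$ via~\eqref{eq:strongly-monotone} and the quadratic term above by $L[\vartheta]^2$ via~\eqref{eq:locally-lipschitz}, and then optimize the resulting quadratic in $\delta$. The only small addition on your side is the explicit justification that $\qN[\delta]^2 > 0$ using $\alpha \le L[\vartheta]$, which the paper leaves implicit.
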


\begin{proof}
	Recall that the Riesz mapping 
	\begin{align} \label{eq:isometry}
		I_\coarse \colon \XX_\coarse \to \XX_\coarse^\prime, \quad v_\coarse \mapsto I_\coarse(v_\coarse) \coloneqq \sprod{\cdot \,}{v_\coarse} \quad \text{ for all } v_\coarse \in \XX_\coarse
	\end{align}
	is an isometric isomorphism; cf., e.g.,~\cite[Chapter~III.6]{yosida}. Therefore, a reformulation of the Zarantonello iteration reads
	\begin{align*}
		\sprod{\Phi_\coarse(\delta; w_\coarse)}{\varphi_\coarse} = \sprod{w_\coarse}{\varphi_\coarse} + \delta \, \sprod{\varphi_\coarse}{I_\coarse^{-1} (F - \AA w_\coarse)} \quad \text{ for all } \varphi_\coarse, w_\coarse \in \XX_\coarse.
	\end{align*}
	Given $v_\coarse, w_\coarse \in \XX_\coarse$ with $\max\big\{ \enorm{v_\coarse}, \enorm{v_\coarse - w_\coarse} \big\} \le \vartheta$, we exploit the last equality for $\Phi_\coarse(\delta; v_\coarse)$ by subtraction of $\Phi_\coarse(\delta; w_\coarse)$ and use $\varphi_\coarse = \Phi_\coarse(\delta; v_\coarse) - \Phi_\coarse(\delta; w_\coarse)$ to arrive at
	\begin{align*}
		\enorm{\Phi_\coarse(\delta; v_\coarse) - \Phi_\coarse(\delta; w_\coarse)}^2 & =  \enorm{v_\coarse - w_\coarse}^2 - 2 \delta\, \sprod{v_\coarse - w_\coarse}{I^{-1}_\coarse(\AA v_\coarse - \AA w_\coarse)} \\
		& \quad + \delta^2\, \enorm{I^{-1}_\coarse (\AA v_\coarse - \AA w_\coarse)}^2.
	\end{align*}
	The isometry property of $I_\coarse$ implies that
	\begin{align*}
		\enorm{I^{-1}_\coarse(\AA v_\coarse - \AA w_\coarse)}^2 \eqreff{eq:isometry}{=} \norm{\AA v_\coarse - \AA w_\coarse}{\XX^\prime}^2 \eqreff{eq:locally-lipschitz}{\le} L[\vartheta]^2 \,\enorm{v_\coarse - w_\coarse}^2. 
	\end{align*}
	Moreover, it holds that
	\begin{align*}
		\sprod{v_\coarse - w_\coarse}{I^{-1}_\coarse(\AA v_\coarse - \AA w_\coarse)} \eqreff{eq:isometry}{=} \dual{\AA v_\coarse - \AA w_\coarse}{v_\coarse - w_\coarse} \eqreff{eq:strongly-monotone}{\geq} \alpha  \,\enorm{v_\coarse - w_\coarse}^2. 
	\end{align*}
	Combining these observations, we see that
	\begin{align*}
		0 \le \enorm{\Phi_\coarse(\delta; v_\coarse)                                                                                                                                                                                                                                                                                                                                                                                                                                                                                                                                                                                                                                                                                                                                                                                                                                                                                                                                                                                                                                                                                                                                                                                                                                                                                                                                                                                                                                                                                                                                                                                                                                                                                                                                                                                                                                                                                                                                                                                                                                                                                                                                                                                                                                                                                                                                                                                                                                                                                                                                                                                                                                                                                                                                                                                                                                                                                                                                                                                                                                                                                                                                                                                                                                                                                                                                                                                                                                                                                                                                                                                                                                                                                                                                                                                                                                                                                                                                                                                                                                                                                                                                                                                                                                                                                                                                                                                                                                                                                                                                                                                                                                                                                                                                                                                                                                                                                                                                                                                                                                                                                                                                                                   - \Phi_\coarse (\delta; w_\coarse)}^2 \le [1- 2 \delta \alpha + \delta^2 L[\vartheta]^2] \, \enorm{v_\coarse - w_\coarse}^2.
	\end{align*}
	Rearranging $\qN[\delta]^{2} \coloneqq 1- 2 \delta \alpha + \delta^2 L[\vartheta]^2  = 1- \delta ( 2\alpha - \delta L[\vartheta]^2)$, we conclude the first claim. Finally, it follows from elementary calculus that $\delta = \alpha/L[\vartheta]^2$ is the unique minimizer of the quadratic polynomial $\qN[\delta]$ if $\alpha$ and $L[\vartheta]^2$ are fixed. This concludes the proof.
\end{proof}
\begin{corollary}\label{cor1:zarantonello}
	Suppose that $\AA$ satisfies~\eqref{eq:strongly-monotone} and~\eqref{eq:locally-lipschitz}. Let $u_\coarse^0 \in \XX_\coarse$ with $\enorm{u_\coarse^0} \le 2M$. Let $0 < \delta < 2 \alpha / L[3M]^2$ and let $0 < \qN[\delta]< 1$ be chosen according to Proposition~\ref{prop:zarantonello}, where $\vartheta = 3M$. Define
	\begin{align}\label{eq0:cor:zarantonello}
		u_\coarse^{k+1} \coloneqq \Phi_\coarse(\delta; u_\coarse^k)
		\quad \text{for all } k \in \N_0.
	\end{align}
	Then, it holds that
	\begin{align}\label{eq:contr}
		(1- \qN[\delta]) \,\enorm{u^\exact_\coarse - u_\coarse^{k}}  \le \enorm{ u_\coarse^{k+1} - u_\coarse^{k}}\le (1 +  \qN[\delta] )\, \enorm{u^\exact_\coarse - u_\coarse^{k}}
	\end{align}
	and
	\begin{align}\label{eq1:cor:zarantonello}
		\enorm{u_\coarse^\exact \!-\! u_\coarse^{k+1}}
		\le \qN[\delta] \, \enorm{u_\coarse^\exact \!-\! u_\coarse^k}
		\le \qN[\delta]^{k+1} \, \enorm{u_\coarse^\exact \!-\! u_\coarse^0}
		\le 3M
		\quad \text{ for all } k \in \N_0.
	\end{align}
	In particular, it follows that
	\begin{align}\label{eq2:cor:zarantonello}
		\enorm{u_\coarse^k}
		\le 4M \quad
		\text{ for all } k \in \N_0.
	\end{align}
\end{corollary}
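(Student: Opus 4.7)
The plan is to run a straightforward induction on $k$, where the inductive hypothesis asserts the bound $\enorm{u_\coarse^\exact - u_\coarse^k} \le 3M$. The estimates~\eqref{eq:contr} and~\eqref{eq2:cor:zarantonello} then follow from~\eqref{eq1:cor:zarantonello} via two applications of the triangle inequality.

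For the base case $k=0$, I would combine $\enorm{u_\coarse^\exact} \le M$ from~\eqref{eq:exact:bounded} with the assumption $\enorm{u_\coarse^0} \le 2M$ to obtain $\enorm{u_\coarse^\exact - u_\coarse^0} \le 3M$. The key point for the induction step is the \emph{choice of arguments} in Proposition~\ref{prop:zarantonello}: I would take $v_\coarse = u_\coarse^\exact$ and $w_\coarse = u_\coarse^k$, so that the hypothesis $\max\{\enorm{v_\coarse}, \enorm{v_\coarse - w_\coarse}\} \le \vartheta = 3M$ reduces to the two bounds $\enorm{u_\coarse^\exact} \le M \le 3M$ (unconditional, from~\eqref{eq:exact:bounded}) and $\enorm{u_\coarse^\exact - u_\coarse^k} \le 3M$ (the inductive hypothesis). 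Using that $u_\coarse^\exact$ is a fixed point of $\Phi_\coarse(\delta; \cdot)$ and invoking~\eqref{eq:prop:zarantonello}, I would then obtain
\begin{align*}
\enorm{u_\coarse^\exact - u_\coarse^{k+1}} = \enorm{\Phi_\coarse(\delta; u_\coarse^\exact) - \Phi_\coarse(\delta; u_\coarse^k)} \le \qN[\delta]\, \enorm{u_\coarse^\exact - u_\coarse^k},
\end{align*}
which iterated back to $k=0$ yields $\enorm{u_\coarse^\exact - u_\coarse^{k+1}} \le \qN[\delta]^{k+1}\, \enorm{u_\coarse^\exact - u_\coarse^0} \le 3M$, closing the induction and simultaneously establishing~\eqref{eq1:cor:zarantonello}.

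Having~\eqref{eq1:cor:zarantonello} in hand, I would derive~\eqref{eq2:cor:zarantonello} directly by writing $\enorm{u_\coarse^k} \le \enorm{u_\coarse^\exact} + \enorm{u_\coarse^\exact - u_\coarse^k} \le M + 3M = 4M$. For~\eqref{eq:contr}, I would insert $u_\coarse^\exact$ and use the triangle inequality in both directions: the upper bound follows from $\enorm{u_\coarse^{k+1}-u_\coarse^k} \le \enorm{u_\coarse^{k+1}-u_\coarse^\exact} + \enorm{u_\coarse^\exact - u_\coarse^k} \le (1+\qN[\delta])\enorm{u_\coarse^\exact - u_\coarse^k}$, and the lower bound from the reverse triangle inequality together with the contraction, giving $\enorm{u_\coarse^{k+1}-u_\coarse^k} \ge \enorm{u_\coarse^\exact - u_\coarse^k} - \enorm{u_\coarse^{k+1}-u_\coarse^\exact} \ge (1-\qN[\delta])\enorm{u_\coarse^\exact - u_\coarse^k}$.

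The only subtle point—and the sole reason an induction is needed at all—is that Proposition~\ref{prop:zarantonello} is a \emph{local} contraction statement whose Lipschitz constant $L[\vartheta]$ (and hence the permissible range of $\delta$) depends on $\vartheta$. This is why the remark following~\eqref{eq:locally-lipschitz} matters here: the formulation of local Lipschitz continuity based on $\enorm{v}$ and $\enorm{v-w}$ (rather than $\enorm{v}$ and $\enorm{w}$) is exactly what permits the choice $v_\coarse = u_\coarse^\exact$, so that only one of the two size conditions involves the iterate $u_\coarse^k$ and can therefore be controlled by the inductive hypothesis alone. Once this bookkeeping is set up correctly, no further work is required.
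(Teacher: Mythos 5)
Your proposal is correct and takes essentially the same route as the paper: induction on $k$ with hypothesis $\enorm{u_\coarse^\exact - u_\coarse^k}\le 3M$, applying Proposition~\ref{prop:zarantonello} with $v_\coarse = u_\coarse^\exact$, $w_\coarse = u_\coarse^k$ and the fixed-point property of $u_\coarse^\exact$, then deriving~\eqref{eq:contr} and~\eqref{eq2:cor:zarantonello} by the triangle inequality. Your closing observation about why the asymmetric form~\eqref{eq:locally-lipschitz} (conditions on $\enorm{v}$ and $\enorm{v-w}$ rather than $\enorm{v}$ and $\enorm{w}$) is what makes the inductive step close with $\vartheta = 3M$ is exactly the point of the remark after~\eqref{eq:locally-lipschitz} and is stated more explicitly here than in the paper's own proof.
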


\begin{proof}
	The claim~\eqref{eq1:cor:zarantonello} is proved by induction on $k$. By recalling~\eqref{eq:exact:bounded}, it holds that 
	$\enorm{u_\coarse^\exact} \le M$ as well as 
	$\enorm{u_\coarse^\exact - u_\coarse^0} \le \enorm{u_\coarse^\exact} + \enorm{u_\coarse^0} 
	\le 3M$. Therefore, Proposition~\ref{prop:zarantonello} proves that
	\begin{align*}
		\enorm{u_\coarse^\exact - u_\coarse^1}
		= \enorm{\Phi_\coarse(\delta; u_\coarse^\exact) - \Phi_\coarse(\delta; u_\coarse^{0})}
		\eqreff{eq:prop:zarantonello}\le 
		\qN[\delta] \, \enorm{u_\coarse^\exact - u_\coarse^0}
		\le 3M.
	\end{align*}
	This proves~\eqref{eq1:cor:zarantonello} for $k = 0$.
	In the induction step, we know that $\enorm{u_\coarse^\exact - u_\coarse^k} \le 3M$.
	As before,~\eqref{eq:prop:zarantonello} from Proposition~\ref{prop:zarantonello} and the induction hypothesis prove that
	\begin{align*}
		\enorm{u_\coarse^\exact - u_\coarse^{k+1}}
		= \enorm{\Phi_\coarse(\delta; u_\coarse^\exact) - \Phi_\coarse(\delta; u_\coarse^k)} \,
		& \eqreff*{eq:prop:zarantonello}\le \, 
		\qN[\delta] \, \enorm{u_\coarse^\exact - u_\coarse^k} 
		\\ &
		\le 
			\qN[\delta]^{k+1} \, \enorm{u_\coarse^\exact - u_\coarse^0} 
		\le 3M. 
	\end{align*}
	This proves~\eqref{eq1:cor:zarantonello} for general $k \in \N_0$, and the inequalities~\eqref{eq:contr} follow from~\eqref{eq:prop:zarantonello} and the triangle inequality. Moreover, the triangle inequality yields that
	\begin{align*}
		\enorm{u_\coarse^k}
		\le \enorm{u_\coarse^\exact} + \enorm{u_\coarse^\exact - u_\coarse^k}
		\le 4M.
	\end{align*}
	This concludes the proof.
\end{proof}

\begin{corollary}\label{cor2:zarantonello}
Suppose that $\AA$ satisfies~\eqref{eq:strongly-monotone} and~\eqref{eq:locally-lipschitz}. Let $u_\coarse^0 \in \XX_\coarse$ with $\enorm{u_\coarse^0} \le 2M$. Let $0 < \delta < 2 \alpha / L[6M]^2$ and let $0 < \qN[\delta]< 1$ be chosen according to Proposition~\ref{prop:zarantonello}, where $\vartheta = 6M$. 
	Then, the Zarantonello iterates from~\eqref{eq0:cor:zarantonello} satisfy~\eqref{eq:contr}--\eqref{eq2:cor:zarantonello} as well as
	\begin{align}\label{eq3:cor:zarantonello}
		\enorm{u_\coarse^{k+1}\!-\! u_\coarse^k}
		\le \qN[\delta] \, \enorm{u_\coarse^k \!-\! u_\coarse^{k-1}}
		\le \qN[\delta]^{k} \, \enorm{u_\coarse^1 \!-\! u_\coarse^0}
		\le 6M
		\text{ for all } k \in \N.
	\end{align}  
\end{corollary}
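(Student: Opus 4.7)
The plan is to reduce both parts of the claim to Proposition~\ref{prop:zarantonello} applied with $\vartheta = 6M$, which is admissible precisely because of the hypothesis $\delta < 2\alpha/L[6M]^2$; throughout, $\qN[\delta]$ denotes the corresponding contraction factor $\qN[\delta]^2 = 1 - \delta(2\alpha - \delta L[6M]^2) \in (0,1)$. For~\eqref{eq:contr}--\eqref{eq2:cor:zarantonello}, I would simply rerun the proof of Corollary~\ref{cor1:zarantonello} with the radius enlarged from $3M$ to $6M$: the pair $(u_\coarse^\exact, u_\coarse^k)$ still satisfies $\max\{\enorm{u_\coarse^\exact}, \enorm{u_\coarse^\exact - u_\coarse^k}\} \le 3M \le 6M$ by the same inductive argument, so Proposition~\ref{prop:zarantonello} applies and yields, inductively, $\enorm{u_\coarse^\exact - u_\coarse^{k+1}} \le \qN[\delta] \enorm{u_\coarse^\exact - u_\coarse^k} \le \qN[\delta]^{k+1} \cdot 3M \le 3M$. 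From this,~\eqref{eq:contr} follows by the triangle inequality combined with contraction, and~\eqref{eq2:cor:zarantonello} from $\enorm{u_\coarse^k} \le \enorm{u_\coarse^\exact} + \enorm{u_\coarse^\exact - u_\coarse^k} \le M + 3M = 4M$.

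For the new bound~\eqref{eq3:cor:zarantonello}, I would instead apply Proposition~\ref{prop:zarantonello} to consecutive iterates $(u_\coarse^k, u_\coarse^{k-1})$, which requires
$$\max\big\{\enorm{u_\coarse^k}, \enorm{u_\coarse^k - u_\coarse^{k-1}}\big\} \le 6M \quad \text{for all } k \in \N.$$
The first term is handled by~\eqref{eq2:cor:zarantonello} since $4M \le 6M$; the second I would establish by induction on $k$. For the base case $k=1$, the triangle inequality together with~\eqref{eq2:cor:zarantonello} and the assumption $\enorm{u_\coarse^0} \le 2M$ gives $\enorm{u_\coarse^1 - u_\coarse^0} \le \enorm{u_\coarse^1} + \enorm{u_\coarse^0} \le 4M + 2M = 6M$. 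For the inductive step, assuming $\enorm{u_\coarse^k - u_\coarse^{k-1}} \le 6M$, Proposition~\ref{prop:zarantonello} delivers
$$\enorm{u_\coarse^{k+1} - u_\coarse^k} = \enorm{\Phi_\coarse(\delta; u_\coarse^k) - \Phi_\coarse(\delta; u_\coarse^{k-1})} \le \qN[\delta] \, \enorm{u_\coarse^k - u_\coarse^{k-1}} \le 6M,$$
which closes the induction and simultaneously establishes the first inequality in~\eqref{eq3:cor:zarantonello}. Iterating this contraction back to $k=1$ produces the middle bound $\enorm{u_\coarse^{k+1} - u_\coarse^k} \le \qN[\delta]^k \enorm{u_\coarse^1 - u_\coarse^0}$, and the final $\le 6M$ follows from $\qN[\delta] < 1$ and the base case.

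The only real subtlety — hardly a genuine obstacle — is the need to inflate the radius from $\vartheta = 3M$ to $\vartheta = 6M$: the crude bound $\enorm{u_\coarse^k - u_\coarse^{k-1}} \le \enorm{u_\coarse^k} + \enorm{u_\coarse^{k-1}} \le 8M$ available from~\eqref{eq2:cor:zarantonello} alone is too weak to feed back into Proposition~\ref{prop:zarantonello}, so the sharper uniform bound $6M$ has to be bootstrapped inductively. This bound is exactly self-consistent with the chosen radius $\vartheta = 6M$, which is why the smaller radius in Corollary~\ref{cor1:zarantonello} does not suffice for~\eqref{eq3:cor:zarantonello}.
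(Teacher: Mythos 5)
Your argument is correct and follows essentially the same route as the paper: both reduce the first part to Corollary~\ref{cor1:zarantonello} (the paper via the monotonicity $L[3M]\le L[6M]$, you by re-running its induction at the larger radius), and both establish~\eqref{eq3:cor:zarantonello} by the identical induction that bootstraps the bound $\enorm{u_\coarse^k - u_\coarse^{k-1}}\le 6M$ through Proposition~\ref{prop:zarantonello} with $\vartheta = 6M$. Your closing remark that the naive $8M$ bound from~\eqref{eq2:cor:zarantonello} alone is too weak, making the self-consistent $6M$ radius necessary, correctly identifies the one subtlety the paper handles silently.
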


\begin{proof}
	Since $L[3M] \le L[6M]$, it only remains to prove~\eqref{eq3:cor:zarantonello}. We argue by induction and note that
	\begin{align*}
		\enorm{u_{\coarse}^1 - u_{\coarse}^0}
		\le  \enorm{ u_{\coarse}^1} + \enorm{ u_{\coarse}^0}  \eqreff{eq2:cor:zarantonello} \le 6M.
	\end{align*}
	Therefore, Proposition~\ref{prop:zarantonello} proves that
	\begin{align*}
		\enorm{u_\coarse^2 - u_\coarse^1}
		= \enorm{\Phi_\coarse(\delta; u_\coarse^1) - \Phi_\coarse(\delta; u_\coarse^0)}
		\eqreff{eq:prop:zarantonello}\le 
		\qN[\delta] \, \enorm{u_\coarse^1 - u_\coarse^0}
		\le 6M. 
	\end{align*}
	This proves~\eqref{eq3:cor:zarantonello} for $k = 1$. 
	In the induction step, we know that $\enorm{u_\coarse^{k+1} - u_\coarse^k} \le 6M$.
	Therefore, Proposition~\ref{prop:zarantonello} and the induction hypothesis prove that
	\begin{align*}
		\enorm{u_\coarse^{k+2}\!-\! u_\coarse^{k+1}}
		= \enorm{\Phi_\coarse(\delta; u_\coarse^{k+1}) - \Phi_\coarse(\delta; u_\coarse^k)}
		\eqreff{eq:prop:zarantonello}\le 
		\qN[\delta] \, \enorm{u_\coarse^{k+1} - u_\coarse^k}
		\le 6M.
	\end{align*}
	This proves~\eqref{eq3:cor:zarantonello} for general $k \in \N$ and concludes the proof.
\end{proof}

\subsection{Zarantonello iteration and energy contraction}
Let $\XX_\coarse \subseteq \XX$ be a closed subspace. The next result extends the abstract lower bound from~\cite[Proposition~1]{hw2020:convergence} to the Zarantonello iteration in the locally Lipschitz continuous setting.
\begin{lemma}
	\label{lemma:f4}
Suppose that $\AA$ satisfies~\eqref{eq:strongly-monotone},~\eqref{eq:locally-lipschitz}, and~\eqref{eq:potential}. Let $u_\coarse^0 \in \XX_\coarse$ with $\enorm{u_\coarse^0} \le 2M$. Then, for $0 < \delta < 2 \alpha  /L[6M]^2$, the Zarantonello iteration~\eqref{eq:zarantonello} yields that
	\begin{align}\label{eq:f4}
 0 \le \kappa[\delta]\, \enorm{u^{k+1}_{\coarse} - u^{k}_{\coarse}}^2 \le \EE(u^{k}_{\coarse}) - \EE(u^{k+1}_{\coarse}) \le K[\delta] \, \enorm{u^{k+1}_{\coarse} -u^{k}_{\coarse}}^2,
	\end{align} 
where $\kappa[\delta] = (\delta^{-1} - L[6M]/2 ) > 0$ and $K[\delta]=\big( \delta^{-1} - \alpha /2 \big)$.
\end{lemma}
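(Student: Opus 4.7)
\textbf{Proof plan for Lemma~\ref{lemma:f4}.}
The plan is to start from the fundamental theorem of calculus applied to the potential $\PP$ along the line segment joining $u_\coarse^{k+1}$ and $u_\coarse^k$, and then use the Zarantonello equation~\eqref{eq:zarantonello} to identify the leading $\delta^{-1}$-term. Writing $d \coloneqq u_\coarse^k - u_\coarse^{k+1}$ and $w_t \coloneqq u_\coarse^{k+1} + t d$ for $t \in [0,1]$, assumption~\eqref{eq:potential} together with $\EE = \PP - F$ yields
\begin{align*}
 \EE(u_\coarse^k) - \EE(u_\coarse^{k+1}) = \int_0^1 \dual{\AA w_t - F}{d}\, \d t.
\end{align*}
Testing~\eqref{eq:zarantonello} with $v_\coarse = d \in \XX_\coarse$ gives the crucial identity $\dual{\AA u_\coarse^k - F}{d} = \delta^{-1} \enorm{d}^2$, which I would then insert via the split
\begin{align*}
 \dual{\AA w_t - F}{d} = \delta^{-1}\enorm{d}^2 - \dual{\AA u_\coarse^k - \AA w_t}{d},
\end{align*}
noting that $u_\coarse^k - w_t = (1-t)\, d$.

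For the \emph{upper} bound on $\EE(u_\coarse^k) - \EE(u_\coarse^{k+1})$, I would apply strong monotonicity~\eqref{eq:strongly-monotone} to estimate
\begin{align*}
 (1-t)\,\dual{\AA u_\coarse^k - \AA w_t}{d} = \dual{\AA u_\coarse^k - \AA w_t}{u_\coarse^k - w_t} \ge \alpha (1-t)^2 \enorm{d}^2,
\end{align*}
so after dividing by $(1-t)$ (for $t < 1$) and integrating, the $\alpha (1-t)$ contribution produces precisely the $-\alpha/2$ in $K[\delta] = \delta^{-1} - \alpha/2$.

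For the \emph{lower} bound, I would use local Lipschitz continuity~\eqref{eq:locally-lipschitz} with $\vartheta = 6M$. Here the key point, and the main obstacle to address carefully, is the verification of the admissible range for the Lipschitz estimate: by Corollary~\ref{cor2:zarantonello} we have $\enorm{u_\coarse^k} \le 4M \le 6M$ and, with $u_\coarse^k - w_t = (1-t) d$, also $\enorm{u_\coarse^k - w_t} \le \enorm{d} \le 6M$, so that~\eqref{eq:locally-lipschitz} is applicable with $v = u_\coarse^k$ and $w = w_t$ and yields
\begin{align*}
 \big|\dual{\AA u_\coarse^k - \AA w_t}{d}\big| \le L[6M](1-t)\, \enorm{d}^2.
\end{align*}
Integrating over $t \in [0,1]$ contributes $-L[6M]/2$ and gives the lower bound $\kappa[\delta] = \delta^{-1} - L[6M]/2$.

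Finally, non-negativity $\kappa[\delta] > 0$ follows from the hypothesis $\delta < 2\alpha/L[6M]^2$: strong monotonicity combined with~\eqref{eq:locally-lipschitz} (choosing $\varphi = v - w$) gives $\alpha \le L[6M]$, hence $\delta < 2\alpha/L[6M]^2 \le 2/L[6M]$, i.e., $\kappa[\delta] > 0$, and in particular the whole chain $0 \le \kappa[\delta] \enorm{d}^2 \le \EE(u_\coarse^k) - \EE(u_\coarse^{k+1}) \le K[\delta] \enorm{d}^2$ is established.
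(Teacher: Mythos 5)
Your proof is correct and follows essentially the same route as the paper's: the fundamental theorem of calculus applied to the energy along the line segment between consecutive iterates, insertion of the Zarantonello identity $\dual{\AA u_\coarse^k - F}{d} = \delta^{-1}\enorm{d}^2$, strong monotonicity for the upper bound, local Lipschitz continuity with $\vartheta = 6M$ (justified by Corollary~\ref{cor2:zarantonello}) for the lower bound, and the elementary estimate $\alpha \le L[6M]$ to conclude $\kappa[\delta] > 0$. The only cosmetic difference is that you parametrize from $u_\coarse^{k+1}$ to $u_\coarse^k$ while the paper goes from $u_\coarse^k$ to $u_\coarse^{k+1}$, which merely swaps the factor $t$ for $1-t$ in the integrands and produces the same constants.
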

\begin{proof}
	Define $e^{k+1}_{\coarse} \coloneqq u^{k+1}_{\coarse} - u^{k}_{\coarse}$ for all $k \in \N_0$. Then,~\eqref{eq:potential} guarantees that $\EE = \PP - F$ is G\^{a}teaux differentiable. Define $\varphi(t) \coloneqq \EE(u^{k}_{\coarse} + t\, e^{k+1}_{\coarse})$ for $t \in [0,1]$ and observe that
	\begin{align*}
		\varphi^\prime(t)  = \prod{\d{\EE}(u^{k}_{\coarse} + t\, e^{k+1}_{\coarse})}{e^{k+1}_{\coarse}} = \prod{\AA(u^{k}_{\coarse} + t\, e^{k+1}_{\coarse}) - F}{e^{k+1}_{\coarse}}.
	\end{align*}
	For $0 < \delta <2 \alpha / L[6M]^2$, Corollary~\ref{cor2:zarantonello} together with the boundedness $\enorm{u^k_\coarse} \le 4M$ from~\eqref{eq2:cor:zarantonello} and the convexity of the norm show that
	\begin{align}\label{eq:critical}
	 \max \big\{ \enorm{e^{k+1}_{\coarse}}, \enorm{u^k_{\coarse} - t\, e^{k+1}_{\coarse}}\big\} \le 6M \quad \text{ for all } k \in \N_0.
	 \end{align}
 With the fundamental theorem of calculus and the Zarantonello iteration~\eqref{eq:zarantonello}, we see that
	\begin{align*}
		\EE(u^{k}_{\coarse}) - \EE(u^{k+1}_{\coarse}) &= - \int_0^1 \prod{\AA(u^{k}_{\coarse} + t\, e^{k+1}_{\coarse})-F}{e^{k+1}_{\coarse}} \, \d{t} \\
		& = - \int_0^1 \prod{\AA(u^{k}_{\coarse} + t\, e^{k+1}_{\coarse})- \AA u^{k}_{\coarse}}{e^{k+1}_{\coarse}} \, \d{t} - \prod{ \AA u^{k}_{\coarse}-F}{e^{k+1}_{\coarse}}  \\
		& \stackrel{\mathclap{\eqref{eq:zarantonello}}}{=}  - \int_0^1 \prod{\AA(u^{k}_{\coarse} + t \,e^{k+1}_{\coarse})- \AA u^{k}_{\coarse}}{e^{k+1}_{\coarse}} \, \d{t} + \frac{1}{\delta} \sprod{ e^{k+1}_{\coarse}}{e^{k+1}_{\coarse}} \\
		& \stackrel{\mathclap{\eqref{eq:locally-lipschitz}}}{\ge} \, \ \Big( \frac{1}{\delta} - \int_0^1 t L[6M] \, \d{t} \Big) \enorm{u^{k+1}_{\coarse} -u^{k}_{\coarse}}^2 = \Big( \frac{1}{\delta} - \frac{L[6M]}{2}\Big)\enorm{u^{k+1}_{\coarse} -u^{k}_{\coarse}}^2. 
	\end{align*}
	Since $\delta < 2 \alpha / L[6M]^2 \le 2 /L[6M]$, it follows that $\kappa[\delta] = (1/\delta - L[6M]/2 )>0$. This proves the lower bound in~\eqref{eq:f4}. Moreover, the same argument also yields that
		\begin{align*}
			\EE(u^{k}_{\coarse}) - \EE(u^{k+1}_{\coarse}) & \stackrel{\mathclap{\eqref{eq:zarantonello}}}{=}  - \int_0^1 \prod{\AA(u^{k}_{\coarse} + t \,e^{k+1}_{\coarse})- \AA u^{k}_{\coarse}}{e^{k+1}_{\coarse}} \, \d{t} + \frac{1}{\delta}\, \sprod{ e^{k+1}_{\coarse}}{e^{k+1}_{\coarse}} \\
			& \stackrel{\mathclap{\eqref{eq:strongly-monotone}}}{\le} \quad \Big( \frac{1}{\delta} - \int_0^1 \alpha \,t \, \d{t} \Big) \enorm{u^{k+1}_{\coarse} -u^{k}_{\coarse}}^2 = \Big( \frac{1}{\delta} - \frac{\alpha}{2} \Big) \, \enorm{u^{k+1}_{\coarse} -u^{k}_{\coarse}}^2.
		\end{align*}
	This concludes the proof.
\end{proof}

The Zarantonello iterates are also contractive with respect to the energy difference.
\begin{proposition}[energy contraction]\label{proposition:econtraction}
	Suppose that $\AA$ satisfies~\eqref{eq:strongly-monotone}, \eqref{eq:locally-lipschitz}, and \eqref{eq:potential}. Then, for $0 < \delta < 2 \alpha /L[6M]^2$, it holds that
	\begin{subequations}\label{eq:energycontr}
		\begin{equation}\label{eq:energycontr:ineq}
			0 \le \EE(u^{k+1}_{\coarse})-\EE(u^\exact_{\coarse}) \le \qE[\delta]^2 \, [\EE(u^{k}_{\coarse})-\EE(u^\exact_{\coarse})] \quad \text{ for all } k \in \N_0
		\end{equation}
		with contraction constant 
		\begin{equation}\label{eq:energycontr:const} 
			0 \le \qE[\delta]^2 \coloneqq 1 - \Big(1 -  \frac{\delta L[6M]}{2} \Big) \, \frac{2\delta \alpha^2}{L[3M]} < 1.
		\end{equation}
	\end{subequations}
We note that $\qE[\delta] \to 1$ as $\delta \to 0$. Furthermore, for all $k \in \N_0$, it holds that
\begin{align}\label{eq:energycontr:triangle}
	(1- \qE[\delta]^{2})\, \big[\EE(u^{k}_\coarse) - \EE(u_\coarse^\exact)\big] \le \EE(u_\coarse^{k}) - \EE(u_\coarse^{k+1}) \le (1 +  \qE[\delta]^{2} ) \; \big[\EE(u^{k}_\coarse) - \EE(u_\coarse^\exact)\big]. 
\end{align}% 
\end{proposition}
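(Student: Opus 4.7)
My plan is to telescope the suboptimality as $\EE(u^{k+1}_\coarse) - \EE(u^\exact_\coarse) = [\EE(u^k_\coarse) - \EE(u^\exact_\coarse)] - [\EE(u^k_\coarse) - \EE(u^{k+1}_\coarse)]$ and to bound the one-step energy decrease from below by a positive multiple of the total suboptimality, which will directly read off the contraction factor $\qE[\delta]^2$. Non-negativity $\EE(u^{k+1}_\coarse) - \EE(u^\exact_\coarse) \ge 0$ is nothing but the minimizer property~\eqref{eq:emin}, so only the upper estimate in~\eqref{eq:energycontr:ineq} requires actual work.

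The quantitative core is a chain of three links. First, I would invoke Lemma~\ref{lemma:f4} to obtain $\EE(u^k_\coarse) - \EE(u^{k+1}_\coarse) \ge \kappa[\delta] \, \enorm{u^{k+1}_\coarse - u^k_\coarse}^2$ with $\kappa[\delta] = (1 - \delta L[6M]/2)/\delta > 0$; this is where the $L[6M]$ in the final formula enters. Second, I would bound the Zarantonello step length from below: subtracting the Galerkin identity~\eqref{eq:weakform:discrete} for $u^\exact_\coarse$ from the defining identity~\eqref{eq:zarantonello} gives $\sprod{u^{k+1}_\coarse - u^k_\coarse}{v_\coarse} = \delta \, \dual{\AA u^\exact_\coarse - \AA u^k_\coarse}{v_\coarse}$ for all $v_\coarse \in \XX_\coarse$; testing with the admissible choice $v_\coarse = u^\exact_\coarse - u^k_\coarse$, then applying strong monotonicity~\eqref{eq:strongly-monotone} on the right and Cauchy--Schwarz on the left, yields the clean inequality $\enorm{u^{k+1}_\coarse - u^k_\coarse} \ge \delta \alpha \, \enorm{u^\exact_\coarse - u^k_\coarse}$. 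Third, since Corollary~\ref{cor1:zarantonello} guarantees $\enorm{u^k_\coarse - u^\exact_\coarse} \le 3M$, Lemma~\ref{lemma:equivalence} is applicable with $\vartheta = 3M$ and delivers $\enorm{u^k_\coarse - u^\exact_\coarse}^2 \ge (2/L[3M])\,[\EE(u^k_\coarse) - \EE(u^\exact_\coarse)]$; this is where $L[3M]$ enters. Concatenating the three bounds produces $\EE(u^k_\coarse) - \EE(u^{k+1}_\coarse) \ge (1 - \delta L[6M]/2)\,(2\delta\alpha^2/L[3M])\,[\EE(u^k_\coarse) - \EE(u^\exact_\coarse)]$, and substitution into the telescoping identity gives exactly the claimed contraction with constant $\qE[\delta]^2$.

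The remaining items are then routine. The strict bound $\qE[\delta]^2 < 1$ is immediate from the positivity of $\kappa[\delta]$ and of $2\delta\alpha^2/L[3M]$; the complementary bound $\qE[\delta]^2 \ge 0$ is forced a posteriori by combining $\EE(u^{k+1}_\coarse) - \EE(u^\exact_\coarse) \ge 0$ with the just-proved contraction. The limit $\qE[\delta] \to 1$ as $\delta \to 0$ is read off the formula. For the triangle-like bound~\eqref{eq:energycontr:triangle}, I would rewrite $\EE(u^k_\coarse) - \EE(u^{k+1}_\coarse) = [\EE(u^k_\coarse) - \EE(u^\exact_\coarse)] - [\EE(u^{k+1}_\coarse) - \EE(u^\exact_\coarse)]$ and plug in~\eqref{eq:energycontr:ineq} for the lower estimate; the upper estimate follows by dropping the non-negative second bracket and using $1 \le 1 + \qE[\delta]^2$. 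The only delicate aspect will be the bookkeeping of $L[\cdot]$-arguments: $L[6M]$ is dictated by the admissibility condition~\eqref{eq:critical} inside Lemma~\ref{lemma:f4}, whereas $L[3M]$ is dictated by the a priori radius $\enorm{u^k_\coarse - u^\exact_\coarse} \le 3M$ from Corollary~\ref{cor1:zarantonello}, which fixes the radius at which the energy equivalence~\eqref{eq:equivalence} is admissible. I would also deliberately avoid the temptation to bound the step length via the reverse triangle inequality~\eqref{eq:contr}, since that would produce the strictly weaker factor $(1 - \qN[\delta])^2$ instead of the sharper $\delta^2\alpha^2$ obtained above via the monotonicity trick.
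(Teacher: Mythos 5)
Your proposal is correct and reproduces the paper's proof essentially step for step: the telescoping decomposition, the lower bound from Lemma~\ref{lemma:f4}, the lower bound on the step length $\enorm{u^{k+1}_\coarse - u^k_\coarse} \ge \delta\alpha\,\enorm{u^\exact_\coarse - u^k_\coarse}$ obtained by subtracting the Galerkin identity from the Zarantonello identity and invoking strong monotonicity plus Cauchy--Schwarz (this is precisely the paper's auxiliary estimate~\eqref{eq:helper:equiv}), and finally the upper half of the energy--norm equivalence~\eqref{eq:equivalence} with $\vartheta = 3M$ justified by Corollary~\ref{cor1:zarantonello}. Your closing remark explaining why the monotonicity trick is used instead of the reverse triangle inequality~\eqref{eq:contr}, and your derivation of~\eqref{eq:energycontr:triangle} by splitting $\EE(u^k_\coarse) - \EE(u^{k+1}_\coarse)$ against $\EE(u^\exact_\coarse)$, are also consistent with what the paper does (the paper summarizes the latter as ``follows from the triangle inequality'').
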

\begin{proof}
	First, we observe that
	\begin{align}\label{eq:helper:equiv}
		\begin{split}
		\alpha \, \enorm{u^\exact_\coarse - u^k_\coarse}^2 &\le \prod{\AA u^\exact_\coarse - \AA u^k_\coarse}{u^\exact_\coarse - u^k_\coarse} \stackrel{\eqref{eq:weakform:discrete}}{=}  \prod{F - \AA u^k_\coarse}{u^\exact_\coarse - u^k_\coarse} \\
		& \stackrel{\mathclap{\eqref{eq:zarantonello}}}{=} \,  \frac{1}{\delta} \,\sprod{u^{k+1}_\coarse - u^k_\coarse}{u^\exact_\coarse - u^k_\coarse} \le \frac{1}{\delta} \, \enorm{u^{k+1}_\coarse - u^k_\coarse} \enorm{u^\exact_\coarse - u^k_\coarse}.
\end{split}
\end{align}
	Since $0 < \delta < 2\alpha/L[6M]^2$, it follows that
	\begin{align*}
		0 \,&\stackrel{\mathclap{\eqref{eq:equivalence}}}{\le} \,\EE(u^{k+1}_{\coarse}) - \EE(u^{\exact}_{\coarse}) = \EE(u^{k}_{\coarse}) - \EE(u^{\exact}_{\coarse}) - \big[\EE(u^{k}_{\coarse}) - \EE(u^{k+1}_{\coarse}) \big] \\
		&\stackrel{\mathclap{\eqref{eq:f4}}}{\le} \, \EE(u^{k}_{\coarse}) - \EE(u^{\exact}_{\coarse}) - \Big( \frac{1}{\delta} - \frac{L[6M]}{2} \Big)\,\enorm{u^{k+1}_{\coarse} - u^{k}_{\coarse}}^2\, \\
		&  \stackrel{\mathclap{\eqref{eq:helper:equiv}}}{\le} \, \EE(u^{k}_{\coarse}) - \EE(u^{\exact}_{\coarse}) - \Big( \frac{1}{\delta} - \frac{L[6M]}{2} \Big) \, \delta^2  \alpha^2\,\enorm{u^\exact_{\coarse} - u^{k}_{\coarse}}^2 \\
		&   \stackrel{\mathclap{\eqref{eq:equivalence}}}{\le} \, \Big[ 1 - \Big( 1  -\frac{\delta L[6M]}{2}\Big)\, \frac{2  \delta  \alpha^2}{L[3M]}\Big]\,\big[\EE(u^{k}_{\coarse}) -  \EE(u^{\exact}_{\coarse})\big],
	\end{align*}
where~\eqref{eq:equivalence} holds due to~\eqref{eq1:cor:zarantonello} from Corollary~\ref{cor1:zarantonello}.
This proves~\eqref{eq:energycontr}. The inequalities~\eqref{eq:energycontr:triangle} follow from the triangle inequality. This concludes the proof.
\end{proof}

	\begin{remark}\label{remark:delta}
		For a globally Lipschitz continuous $\AA$ with Lipschitz constant $L$, we observe that the energy contraction factor is minimal for $\delta = 1/L$, where $\qE[\delta]^2 = 1- \tfrac{\alpha^2}{L^2}$. In contrast, the optimal norm contraction factor $\qN[\delta]^2 = 1 - \tfrac{\alpha^2}{L^2}$ is obtained for $\delta = \tfrac{\alpha}{L^{2}}$; cf. Proposition~\ref{prop:zarantonello}. To allow a larger damping parameter $\delta > 0$, energy contraction is prefered.
	\end{remark}

%%%%%%%%%%%%%%%%%%%%%%%%%%%%%%%%%%%%%%%%%%%%%%%%%%%%%%%%%%%%%%%%%%%%%%%%%%%%%%%%%%%
\subsection{Mesh refinement}
\label{subsection:mesh-refinement}
%%%%%%%%%%%%%%%%%%%%%%%%%%%%%%%%%%%%%%%%%%%%%%%%%%%%%%%%%%%%%%%%%%%%%%%%%%%%%%%%%%%
From now on, let $\TT_0$ be a given conforming triangulation of the polyhedral Lipschitz domain $\Omega \subset \R^d$ with $d \ge 1$. For mesh refinement, we employ newest vertex bisection (NVB) for $d \ge 2$ (see, e.g.,~\cite{stevenson2008}), or the 1D bisection from~\cite{affkp2013} for $d=1$. 
For each triangulation $\TT_\coarse$ and a set of marked elements $\MM_\coarse \subseteq \TT_\coarse$, let $\TT_\fine \coloneqq \refine(\TT_\coarse,\MM_\coarse)$ be the coarsest triangulation such that all $T \in \MM_\coarse$ have been refined, i.e., $\MM_\coarse \subseteq \TT_\coarse \backslash \TT_\fine$. 
We write $\TT_\fine \in \T(\TT_\coarse)$, if $\TT_\fine$ results from $\TT_\coarse$ by finitely many steps of refinement.
To abbreviate notation, let $\T\coloneqq\T(\TT_0)$.

Throughout, each triangulation $\TT_\coarse \in \T$ is associated with a conforming finite-di\-men\-sion\-al space $\XX_\coarse \subset \XX$, and we suppose that mesh refinement $\TT_\fine \in \T(\TT_\coarse)$ implies nestedness $\XX_\coarse \subseteq \XX_\fine \subset \XX$.
%%%%%%%%%%%%%%%%%%%%%%%%%%%%%%%%%%%%%%%%%%%%%%%%%%%%%%%%%%%%%%%%
\subsection{Axioms of adaptivity and \textsl{a~posteriori} error estimator}\label{subsection:axioms}
%%%%%%%%%%%%%%%%%%%%%%%%%%%%%%%%%%%%%%%%%%%%%%%%%%%%%%%%%%%%%%%%
For $\TT_\coarse \in \T$ and $v_\coarse \in \XX_\coarse$, let
\begin{align}
	\begin{split}\label{eq:estimator:generic}
		\eta_\coarse(T, \cdot)\colon \XX_\coarse \to \R_{\ge 0}\quad \text{ for all } T \in \TT_\coarse 
	\end{split}
\end{align}
be the local contributions of an \textsl{a~posteriori} error estimator 
\begin{equation*}
	\eta_\coarse(v_\coarse) \coloneqq \eta_\coarse(\TT_\coarse, v_\coarse), \text{ where } 
	\eta_\coarse(\UU_\coarse, v_\coarse)
	\coloneqq
	\Big( \sum_{T \in \,\UU_\coarse} \eta_\coarse(T, v_\coarse)^2 \Big)^{1/2}  \text{ for all } \UU_\coarse \subseteq \TT_\coarse.
\end{equation*}
We suppose that the error estimator $\eta_\coarse$ satisfies the following axioms of adaptivity from~\cite{axioms} with a slightly relaxed variant of stability~\eqref{assumption:stab} from~\cite{bbimp2022}.

\renewcommand{\theenumi}{{A\arabic{enumi}}}
\begin{enumerate}
	\bf
	\item\label{assumption:stab} stability: \rm 
	For all $\vartheta > 0$ and all\footnote{While~\cite[Proposition~15]{bbimp2022} states stability only for $\TT_\fine \cap \TT_\coarse$, the inspection of the proof reveals that indeed arbitrary subsets $\UU_\coarse \subseteq \TT_\fine \cap \TT_\coarse$ are admissible.} $\UU_\coarse \subseteq \TT_\fine \cap \TT_\coarse$, there exists $\Cstab[\vartheta] >0$ such that for all $v_\fine \in \XX_\fine$ and $v_\coarse \in \XX_\coarse$ with $\max\big\{\enorm{v_\fine}, \enorm{v_\fine-v_\coarse}\big\}  \le \vartheta$, it holds that 
	\begin{align*}
		\big| \eta_\fine(\UU_\coarse, v_\fine) - \eta_\coarse(\UU_\coarse, v_\coarse) \big|
		&\le \Cstab[\vartheta]\, \enorm{v_\fine - v_\coarse}.
	\end{align*}
	\bf
	\item\label{assumption:red} reduction: \rm
	With $0 < \qred <1$, it holds that 
	\begin{align*}
		\eta_\fine(\TT_\fine \backslash \TT_\coarse, v_\coarse) 
		&\le \qred \, \eta_\coarse(\TT_\coarse \backslash \TT_\fine, v_\coarse) \quad \text{ for all } v_\coarse \in \XX_\coarse.
	\end{align*}
	\bf
	\item\label{assumption:rel} reliability: \rm
	There exists $\Crel >0$ such that
	\begin{align*}
		\enorm{u^\exact - u_\coarse^\exact} 
		&\le \Crel \, \eta_\coarse(u_\coarse^\exact).
	\end{align*} 
	\bf
	\item\label{assumption:drel} discrete reliability: \rm
	There exists $\Cdrel >0$ such that
	\begin{align*}
		\enorm{u_\fine^{\exact} - u_\coarse^{\exact}} 
		&\le \Cdrel \, \eta_\coarse(\TT_\coarse \backslash \TT_\fine, u_\coarse^{\exact})	. 
	\end{align*} 
\end{enumerate}

%%%%%%%%%%%%%%%%%%%%%%%%%%%%%%%%%%%%%%%%%%%%%%%%%%%%%%%%%%%%%%%%
%%%%%%%%%%%%%%%%%%%%%%%%%%%%%%%%%%%%%%%%%%%%%%%%%%%%%%%%%%%%%%%%
\subsection{Idealized adaptive algorithm}
\label{subsection:idealized}
%%%%%%%%%%%%%%%%%%%%%%%%%%%%%%%%%%%%%%%%%%%%%%%%%%%%%%%%%%%%%%%%
%%%%%%%%%%%%%%%%%%%%%%%%%%%%%%%%%%%%%%%%%%%%%%%%%%%%%%%%%%%%%%%%

In the following, we formulate and analyze an AILFEM algorithm in the spirit of~\cite{ghps2021}, but with an extended stopping criterion in Algorithm~\ref{algorithm:idealized}{\eqref{eq:stoppingcrit}}, i.e.,
	\begin{align}\label{eq:stoppingcrit}
		|\EE(u^{k-1}_\ell) - \EE(u^{k}_\ell)| \le  \lambda^2 \, \eta_\ell(u^k_\ell)^{2} \quad \land \quad \enorm{u^k_\ell} \le 2M. \tag{\rm i.b}
	\end{align}
Clearly, if the stopping criterion from Algorithm~\ref{algorithm:idealized}{\rm \eqref{eq:stoppingcrit}} holds, then also the simpler stopping criterion $|\EE(u^{k-1}_\ell) - \EE(u^{k}_\ell)| \le  \lambda^2\, \eta_\ell(u^k_\ell) $ from~\cite[Algorithm~2]{ghps2021} holds.

The proposed algorithm is idealized in the sense that an appropriate parameter $\delta >0$ is chosen \textsl{a priori}; see Theorem~\ref{theorem:rates} below.
\begin{algorithm}[idealized AILFEM with energy contraction]\label{algorithm:idealized}
	
	\textbf{Input:} initial triangulation $\TT_0$, initial guess $u_0^0 \coloneqq 0$ with $M =\frac{1}{\alpha} \norm{F - \AA 0}{\XX^\prime} < \infty$ according to~\eqref{eq:exact:bounded}, marking parameters $0 < \theta \le 1$ and $1 \le \Cmark< \infty$, damping parameter $\delta>0$, solver parameter $\lambda > 0$.
	
	\textbf{Loop:} For $\ell = 0, 1, 2, \dots$, repeat the following steps {\rm(i)--(iv)}:
	\begin{itemize}
		\item[\rm(i)] For all $k = 1, 2, 3, \dots$, repeat the following steps {\rm(a)--(b)}:
		\begin{itemize}
			\item[\rm(a)] Compute $u_\ell^k \coloneqq \Phi_\ell(\delta; u_\ell^{k-1})$ and $\eta_\ell(T, u_\ell^k)$ for all $T \in \TT_\ell$.
			\item[\rm(b)] Terminate $k$-loop if \quad $\big( |\EE(u^{k-1}_\ell) - \EE(u^{k}_\ell)| \le  \lambda^2\, \eta_\ell(u^k_\ell)^{2} \quad \land \quad \enorm{u^k_\ell} \le 2M \big)$.
		\end{itemize}
		\item[\rm(ii)] Upon termination of the $k$-loop, define $\kMAXL \coloneqq k$.
		\item[\rm(iii)] Determine a set $\MM_\ell \subseteq \TT_\ell$ with up to the multiplicative factor $\Cmark$ minimal cardinality such that $\theta \, \eta_\ell(u_\ell^{\kMAXL})^2 \le \sum_{T \in \MM_\ell} \eta_\ell(T, u_\ell^{\kMAXL})^2$.
		\item[\rm(iv)] Generate $\TT_{\ell+1} \coloneqq \refine(\TT_\ell, \MM_\ell)$ and define $u_{\ell+1}^0 \coloneqq u_\ell^{\kMAXL}$.
	\end{itemize}
\end{algorithm}
Following~\cite{ghps2021}, the analysis of Algorithm~\ref{algorithm:idealized} requires the ordered index set 
\begin{align}
\QQ \coloneqq \set{(\ell, k) \in \N_0^2}{\text{ index pair }(\ell, k) \text{ occurs in Algorithm~\ref{algorithm:idealized} and } k < \kmax(\ell)},
\end{align}
where $\kmax(\ell) \ge 1$ counts the number of solver steps for each $\ell$. The pair $(\ell, \kmax(\ell))$ is excluded from $\QQ$, since either $(\ell + 1, 0) \in \QQ$ and $u^0_{\ell + 1} = u^{\kmax(\ell)}_\ell$ or even $\kmax(\ell) \coloneqq \infty$ if the $k$-loop does not terminate after finitely many steps. Since Algorithm~\ref{algorithm:idealized} is sequential, the index set $\QQ$ is lexicographically ordered: For $(\ell, k)$ and $(\ell^\prime, k^\prime) \in \QQ$, we write $(\ell^\prime, k^\prime) < (\ell, k)$ if and only if $(\ell^\prime, k^\prime)$ appears earlier in Algorithm~\ref{algorithm:idealized} than $(\ell, k)$. Given this ordering, we define the \emph{total step counter}
\begin{align*}
|(\ell, k)| \coloneqq \#\set{(\ell^\prime, k^\prime) \in \QQ}{(\ell^\prime, k^\prime) < (\ell, k)} = k + \sum_{\ell' = 0}^{\ell - 1} \kmax(\ell^{\prime}),
\end{align*}
which provides the total number of solver steps up to the computation of $u^k_\ell$. 

Moreover, we define $\overline{\QQ} \coloneqq \QQ\,  \cup\, \set{(\ell, \kmaxl)}{ \ell \in \N_0 \text{ with } (\ell+1, 0) \in \QQ}$. Note that $\overline{\QQ} \subset \N_0 \times \N_0$ is a countably infinite index set such that, for all $(\ell,k) \in \N_0 \times \N_0$,
\begin{align*}
 (\ell+1,0) \in \overline{\QQ }
 &\quad \Longrightarrow \quad 
 (\ell, \kmax(\ell)) \in \overline{\QQ} \text{ and }\kmax(\ell) = \max \set{ k \in \N_0}{(\ell,k) \in \overline{\QQ}},
 \\
 (\ell, k+1) \in \overline{\QQ}
 &\quad \Longrightarrow \quad (\ell, k) \in \QQ.
\end{align*}
With $\lmax \coloneqq \sup \set{ \ell \in \N_0}{(\ell,0) \in \QQ}$, it then follows that either $\lmax = \infty$ or $\kmax(\lmax) = \infty$. From now on and throughout the paper, we employ the abbreviations $(\ell, \kmax) \coloneqq (\ell, \kmaxl)$ and $u^\kmax_{\ell} \coloneqq u^\kmaxl_{\ell}$.

\begin{corollary}\label{cor:crucial}
	Suppose that $\AA$ satisfies~\eqref{eq:strongly-monotone},~\eqref{eq:locally-lipschitz}, and~\eqref{eq:potential}.  Suppose the axioms of adaptivity~\eqref{assumption:stab}--\eqref{assumption:rel}. Let $\lambda > 0$ and $0 < \theta \le 1$ be arbitrary. Then, there exists a choice of the parameter $\delta  >0$ in Algorithm~\ref{algorithm:idealized} such that there exist $0 < \qN < 1$ and $0 < \qE < 1$ such that the following properties hold: 
	\begingroup\abovedisplayskip=0pt
	\begin{itemize}[leftmargin=0.75cm]
	\rm {\item[$\triangleright$]{\makebox[4cm]{\bf nested iteration:\hfill}}}	\vspace{-\baselineskip}
		\begin{flalign}\label{crucial:nestediteration} \vspace*{-\abovedisplayskip}
		&& \mathmakebox[6.5cm]{\enorm{u^0_\ell}  \le 2M}  \quad \text{ for all } (\ell, 0) \in \QQ;
		\end{flalign}
	\end{itemize}
	\begin{itemize}[leftmargin=0.75cm]
		\rm {\item[$\triangleright$]{\makebox[4cm]{\bf boundedness:\hfill}}} \vspace{-\baselineskip}
		\begin{flalign}\label{crucial:mbounded} 
		&&   \mathmakebox[6.5cm]{\enorm{u_\ell^k} \le 4M} \quad\text{ for all } (\ell, k) \in \QQ;
		\end{flalign}
		\vspace{-\baselineskip}
	\end{itemize}
\begin{itemize}[leftmargin=0.75cm]
	\rm {\item[$\triangleright$]{\makebox[4cm]{\bf norm contraction:\hfill}}}	\vspace{-\baselineskip}
	\begin{flalign}\label{crucial:contractivity} 
		&&\mathmakebox[6.5cm]{\mkern15mu \enorm{u^\exact_\ell - u_\ell^{k+1}} \le \qN \, \enorm{u^\exact_\ell - u_\ell^{k}}} \quad \text{ for all } (\ell, k) \in \QQ;
		\end{flalign}
\end{itemize}
\begin{itemize}[leftmargin=0.75cm]
	\rm {\item[$\triangleright$]{\makebox[4cm]{\bf energy contraction:\hfill}}}	\vspace{-\baselineskip}
\begin{flalign}\label{crucial:energycontractivity} 
	&&\mathmakebox[6.5cm]{\mkern27mu \EE(u^{k+1}_\ell) - \EE(u_\ell^{\exact}) \le \qE^{2} \big[ \EE(u^{k}_\ell) - \EE(u_\ell^{\exact})\big] }\quad \text{ for all } (\ell, k) \in \QQ.
	\end{flalign}
\end{itemize}
\endgroup
\noindent	Moreover, this guarantees~\eqref{eq:contr}--\eqref{eq1:cor:zarantonello} for all $(\ell, k) \in \QQ$ with $\qN[\delta]$ replaced by $\qN$. Furthermore, there exists $k_0 \in \N_0$ such that $\enorm{u^k_\ell} \le 2M$ for all $(\ell, k) \in \QQ$ with $k \ge k_0$.
\end{corollary}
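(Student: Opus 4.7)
The plan is to pick the Zarantonello damping $\delta$ once and for all so that both Proposition~\ref{prop:zarantonello} and Proposition~\ref{proposition:econtraction} apply with threshold $\vartheta = 6M$, and then to establish the four properties simultaneously by induction on the outer index $\ell$. Concretely, I would fix any $0 < \delta < 2\alpha / L[6M]^2$, which is the tightest of all bounds required in Corollaries~\ref{cor1:zarantonello} and~\ref{cor2:zarantonello}, Lemma~\ref{lemma:f4}, and Proposition~\ref{proposition:econtraction}, and set $\qN \coloneqq \qN[\delta]$ and $\qE \coloneqq \qE[\delta]$, both in $(0,1)$.

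The induction is driven by the nested-iteration bound~\eqref{crucial:nestediteration}. The base case $\ell = 0$ holds trivially since $u_0^0 = 0$. Under the inductive hypothesis $\enorm{u_\ell^0} \le 2M$, Corollary~\ref{cor1:zarantonello} applied on the closed subspace $\XX_\ell$ immediately delivers the boundedness~\eqref{crucial:mbounded}, the norm contraction~\eqref{crucial:contractivity}, and all of~\eqref{eq:contr}--\eqref{eq1:cor:zarantonello} with $\qN$ in place of $\qN[\delta]$, while Proposition~\ref{proposition:econtraction} yields the energy contraction~\eqref{crucial:energycontractivity}. To propagate the hypothesis to level $\ell+1$, I would use that step~(i)(b) of Algorithm~\ref{algorithm:idealized} terminates only when the second conjunct of~\eqref{eq:stoppingcrit} is satisfied, which forces $\enorm{u_\ell^\kmax} \le 2M$; since $u_{\ell+1}^0 \coloneqq u_\ell^\kmax$ by step~(iv), this yields $\enorm{u_{\ell+1}^0} \le 2M$ whenever $(\ell+1, 0) \in \QQ$. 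If $\kmax(\ell) = \infty$, the index $(\ell+1, 0)$ never enters $\QQ$ and no further verification is required at the next level.

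For the concluding uniform statement, I would choose $k_0 \in \N_0$ with $3\, \qN^{k_0} \le 1$; this is possible and $\ell$-independent precisely because $\qN$ does not depend on the subspace. From Proposition~\ref{prop:existence} and the nested-iteration bound one has $\enorm{u_\ell^\exact - u_\ell^0} \le \enorm{u_\ell^\exact} + \enorm{u_\ell^0} \le 3M$, and the already-established norm contraction then gives
\begin{align*}
\enorm{u_\ell^\exact - u_\ell^k} \le \qN^k \, \enorm{u_\ell^\exact - u_\ell^0} \le 3M\, \qN^k \le M \quad \text{for all } k \ge k_0.
\end{align*}
Combining with $\enorm{u_\ell^\exact} \le M$ via the triangle inequality delivers the required $\enorm{u_\ell^k} \le 2M$, uniformly in $\ell$.

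The conceptual heart of the argument — and essentially the only place where care is needed — is the uniformity of the contraction factors $\qN$ and $\qE$ across all subspaces $\XX_\ell$. This uniformity is built into Proposition~\ref{prop:zarantonello} and Proposition~\ref{proposition:econtraction} via the universal threshold $\vartheta = 6M$, which is in turn legitimized by the boundedness~\eqref{eq:exact:bounded} of all exact Galerkin solutions by the same constant $M$; beyond that, the proof is essentially a careful chaining of the already-established ingredients.
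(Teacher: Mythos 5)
Your proof is correct and takes essentially the same approach as the paper: choose any $\delta$ with $0 < \delta < 2\alpha/L[6M]^2$ (the tightest of the thresholds), apply Corollary~\ref{cor1:zarantonello} and Proposition~\ref{proposition:econtraction} on each $\XX_\ell$, and note that the stopping criterion in Algorithm~\ref{algorithm:idealized}(i.b) propagates $\enorm{u_\ell^0}\le 2M$ to level $\ell+1$. You merely make the induction on $\ell$ explicit where the paper compresses it into the remark that the algorithm and $u_0^0=0$ already guarantee $\enorm{u_\ell^0}\le 2M$.
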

\begin{proof}
	Let $0 < \delta < 2\alpha/L[6M]^2$ be arbitrary but fixed. From Algorithm~\ref{algorithm:idealized} and $u^0_0 \coloneqq 0$, we have that $\enorm{u^0_{\ell}} \le 2M$. Then, $\enorm{u^\exact_{\ell} - u^0_{\ell}} \le 3M$. Choose $0 < \qN \coloneqq \qN[\delta] < 1$ according to Proposition~\ref{prop:zarantonello}, where $\vartheta = 3M$ as well as $0 < \qE \coloneqq \qE[\delta] <1$ according to Proposition~\ref{proposition:econtraction}. This proves norm contraction~\eqref{crucial:contractivity} as well as energy contraction~\eqref{crucial:energycontractivity} for all $({\ell}, k) \in \QQ$. Furthermore, for all $({\ell}, k) \in \QQ$, it follows that 
		\begin{align}\label{eq:normcrit}
			\enorm{u^k_\ell}  &\le  \enorm{u^\exact_\ell} + \enorm{u^\exact_\ell - u^k_\ell} \stackrel{\mathclap{\eqref{eq1:cor:zarantonello}}}{\le} M + \qN^{k} \,\enorm{u^\exact_\ell - u^0_\ell}\le  M + \qN^{k} \, 3M \le 4M,
		\end{align}
	which proves boundedness~\eqref{crucial:mbounded}. Moreover,~\eqref{eq:normcrit} together with $0 < \qN < 1$ from~\eqref{crucial:contractivity} proves that there exists $k_0 \in \N_0$, which is independent of $\ell$, such that, for all $k \ge k_0$, it holds that
	\begin{align*}
		\enorm{u^k_{\ell}}  & \stackrel{\mathclap{\eqref{eq:normcrit}}}{\le}  M + \qN^{k} \, 3M \stackrel{!}{\le} 2M.
	\end{align*}
This shows for $(\ell,0) \in \QQ$ that the stopping criterion~$\enorm{u^k_\ell} \le 2M$ is met for all $(\ell,k) \in \QQ$ with $k \ge k_0$. This concludes the proof.
\end{proof}

\subsection{\texorpdfstring{AILFEM under the assumption of energy contraction~\eqref{crucial:energycontractivity}}{AILFEM under the assumption of energy contraction~(EC)}}\label{subsection:zarantonello:energy}
Norm contraction~\eqref{crucial:contractivity} is the critical ingredient in the proof of Corollary~\ref{cor:crucial} --- leading to boundedness (Corollary~\ref{cor1:zarantonello}), which is key to the proof of energy contraction~\eqref{crucial:energycontractivity} (cf.~\eqref{eq:critical}). Thus, norm contraction~\eqref{crucial:contractivity} is sufficient for obtaining nested iteration~\eqref{crucial:nestediteration}, boundedness~\eqref{crucial:mbounded}, and energy contraction~\eqref{crucial:energycontractivity}. However, supposing~\eqref{crucial:energycontractivity} already suffices to obtain uniform constants in the energy norm as the next result shows. Thus, throughout the rest of this paper, we suppose that energy contraction~\eqref{crucial:energycontractivity} holds for all $(\ell, k) \in \QQ$. 
\begin{lemma}
		\label{cor:zarantonello:termination}
	Suppose that $\AA$ satisfies~\eqref{eq:strongly-monotone},~\eqref{eq:locally-lipschitz}, and~\eqref{eq:potential}. Suppose that the choice of $\delta  >0$ guarantees that Algorithm~\ref{algorithm:idealized} satisfies~energy contraction~\eqref{crucial:energycontractivity}. Then, it holds that
	\begin{subequations}\label{eq:tau}
	\begin{equation}\label{eq:tau:def}
		\enorm{u^k_{\ell}} \le M + 3M \, \frac{L[3M]}{\alpha} \eqqcolon \frac{\tau}{2} \quad \text{ for all } (\ell, k) \in \QQ.
	\end{equation}
Moreover, it holds that
\begin{equation}\label{eq:uniform:const}
	\enorm{u^{k}_{\ell} - u^{k'}_{\ell}}  \le \tau \quad \text{ for all } ({\ell},k), ({\ell}, k') \in \QQ.
\end{equation}
\end{subequations}
Furthermore, there exists $k_0 \in \N_0$, which is independent of $\ell$, such that
\begin{align}
	\enorm{u^{k}_{\ell}} \le 2M \quad \text{ for all } (\ell, k) \in \QQ \text{ with } k \ge k_0.
\end{align} %
\end{lemma}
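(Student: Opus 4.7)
The plan is to combine the assumed energy contraction~\eqref{crucial:energycontractivity} with the energy--norm equivalence of Lemma~\ref{lemma:equivalence} and the nested-iteration structure built into the stopping criterion~\eqref{eq:stoppingcrit} of Algorithm~\ref{algorithm:idealized}. The key technical point I want to exploit is that the lower bound $\frac{\alpha}{2}\enorm{v_\coarse - u_\coarse^\exact}^2 \le \EE(v_\coarse) - \EE(u_\coarse^\exact)$ in~\eqref{eq:equivalence} actually holds for \emph{every} $v_\coarse \in \XX_\coarse$ without the hypothesis $\enorm{v_\coarse - u_\coarse^\exact} \le \vartheta$, because its proof (integrating $\AA$ along the segment from $u_\coarse^\exact$ to $v_\coarse$ and using $\prod{\AA u_\coarse^\exact - F}{\cdot} = 0$ on $\XX_\coarse$) rests solely on strong monotonicity~\eqref{eq:strongly-monotone}; only the complementary upper bound uses local Lipschitz continuity and hence needs the a priori bound on $\enorm{v_\coarse - u_\coarse^\exact}$.

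First, I would establish the nested-iteration bound $\enorm{u_\ell^0} \le 2M$ for every $(\ell,0) \in \QQ$ by induction on $\ell$. The base case $u_0^0 = 0$ is immediate, and in the induction step $u_\ell^0 = u^\kmax_{\ell-1}$ together with the explicit conjunct $\enorm{u_\ell^k} \le 2M$ of the stopping criterion~\eqref{eq:stoppingcrit} forces $\enorm{u^\kmax_{\ell-1}} \le 2M$. Combining with~\eqref{eq:exact:bounded} yields $\enorm{u_\ell^0 - u_\ell^\exact} \le 2M + M = 3M$, so Lemma~\ref{lemma:equivalence} applied with $\vartheta = 3M$ produces the initial energy bound
\begin{equation*}
\EE(u_\ell^0) - \EE(u_\ell^\exact) \le \frac{L[3M]}{2} (3M)^2 = \frac{9 M^2 L[3M]}{2}.
\end{equation*}

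Iterating the assumed energy contraction~\eqref{crucial:energycontractivity} then gives $\EE(u_\ell^k) - \EE(u_\ell^\exact) \le \qE^{2k} \cdot \frac{9 M^2 L[3M]}{2}$ for every $(\ell,k) \in \QQ$. Plugging this into the global lower bound and using the elementary inequality $\alpha \le L[3M]$ (a direct consequence of combining~\eqref{eq:strongly-monotone} and~\eqref{eq:locally-lipschitz} on any nontrivial small pair), I obtain
\begin{equation*}
\enorm{u_\ell^k - u_\ell^\exact} \le 3M \qE^k \sqrt{L[3M]/\alpha} \le 3M \qE^k L[3M]/\alpha.
\end{equation*}
With $\qE^k \le 1$ and the triangle inequality against $\enorm{u_\ell^\exact} \le M$, this gives $\enorm{u_\ell^k} \le M + 3M L[3M]/\alpha = \tau/2$, which is~\eqref{eq:tau:def}; one further triangle inequality yields~\eqref{eq:uniform:const}.

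Finally, the same decay estimate $\enorm{u_\ell^k - u_\ell^\exact} \le 3M \qE^k L[3M]/\alpha$ lets me take $k_0$ as the smallest non-negative integer with $3 \qE^{k_0} L[3M]/\alpha \le 1$; since $0 < \qE < 1$, this integer is finite and depends only on $\alpha$, $L[3M]$, and $\qE$, hence is independent of $\ell$. For every $k \ge k_0$ one then has $\enorm{u_\ell^k - u_\ell^\exact} \le M$, whence $\enorm{u_\ell^k} \le 2M$, which is the final assertion. The only non-routine step in the argument is the identification of the lower half of~\eqref{eq:equivalence} as genuinely global; once that is in hand, the rest is a direct bookkeeping combination of the two-sided energy estimates with the stopping criterion.
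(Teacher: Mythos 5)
Your proof is correct and follows essentially the same route as the paper: establish nested iteration $\enorm{u^0_\ell}\le 2M$ from the stopping criterion, convert to the initial energy bound via the upper half of~\eqref{eq:equivalence} with $\vartheta=3M$, iterate energy contraction~\eqref{crucial:energycontractivity}, and convert back via the lower half of~\eqref{eq:equivalence}. You make explicit something the paper uses only tacitly, namely that the lower bound $\tfrac{\alpha}{2}\enorm{v_\coarse-u^\exact_\coarse}^2\le\EE(v_\coarse)-\EE(u^\exact_\coarse)$ is global, relying on strong monotonicity alone; this is indeed needed, since at that point of the argument no a~priori bound on $\enorm{u^k_\ell-u^\exact_\ell}$ is available under the assumption of energy contraction alone. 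The one place you diverge is the definition of $\tau$: the paper's proof actually derives the sharper constant $\tau/2 = M + 3M\bigl(L[3M]/\alpha\bigr)^{1/2}$, which is smaller than the $M + 3M\,L[3M]/\alpha$ appearing in the lemma statement (a small inconsistency within the paper). You match the statement's weaker form by invoking $\alpha\le L[3M]$, which is fine, but you could have kept the square root throughout and obtained the sharper bound with no extra work.
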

\begin{proof}
From Algorithm~\ref{algorithm:idealized} and $u^0_0 \coloneqq 0$, we have that $\enorm{u^0_{\ell}} \le 2M$. With $\enorm{u^\exact_\ell} \le M$ from~\eqref{eq:exact:bounded}, it holds that $\enorm{u^\exact_{\ell} - u^0_{\ell}} \le 3M$. For all $({\ell}, k) \in \QQ$, it follows that 
\begin{align}
	\enorm{u^k_{\ell}} & 
	\le \enorm{u^\exact_{\ell}} + \enorm{u^\exact_{\ell} - u^{k}_{\ell}} \stackrel{\eqref{eq:equivalence}}{\le} M + \Big( \frac{2}{\alpha}\Big)^{1/2} \, \big( \EE(u^k_{\ell})- \EE(u^\exact_{\ell})\big)^{1/2}  \notag
	\\& \stackrel{\mathclap{\eqref{crucial:energycontractivity}}}{\le}\, \, M + \qE^{k}\, \Big( \frac{2}{\alpha}\Big)^{1/2}  \, \big( \EE(u^0_{\ell})- \EE(u^\exact_{\ell})\big)^{1/2}  \stackrel{\eqref{eq:equivalence}}{\le}  M + \qE^{k}\, 3M \, \Big( \frac{L[3M]}{\alpha}\Big)^{1/2}  
	\label{eq:inductivearg:nested}\\ & 
	\stackrel{\mathclap{\eqref{crucial:energycontractivity}}}{\le}\, \, M + 3M \Big(\frac{L[3M]}{\alpha} \Big)^{1/2} \eqqcolon \frac{\tau}{2}.
	\label{eq:inductivearg:uniform} 
\end{align}
This and the triangle inequality prove~\eqref{eq:uniform:const}. Moreover, inequality~\eqref{eq:inductivearg:nested} together with $0 < \qE < 1$ from energy contraction~\eqref{crucial:energycontractivity} proves that there exists $k_0 \in \N_0$, which is independent of $\ell$, such that
\begin{align}\label{eq:normcrit:ec}
	\enorm{u^{k}_{\ell}} \stackrel{\eqref{eq:inductivearg:nested}}{\le} M +  \qE^{k} \,3M \, \Big( \frac{L[3M]}{\alpha}\Big)^{1/2} \stackrel{!}{\le} 2M \quad \text{ for all } (\ell, k) \in \QQ \text{ with } k \ge k_0.
\end{align}
This concludes the proof.
\end{proof}
	\begin{remark}\label{remark:stopping}
{\rm{(i)}}~From Lemma~\ref{cor:zarantonello:termination}, we infer that the stopping criterion can fail only finitely many times due to the energy norm criterion $\enorm{u^k_\ell} \le 2M$. 

{\rm{(ii)}}~Under the assumption of energy contraction~\eqref{crucial:energycontractivity}, we note that~\eqref{eq:uniform:const} shows that $\tau$ provides a uniform upper bound for the involved stability and Lipschitz constants $\Cstab[\tau]$ and $L[\tau]$, respectively. Indeed, it will become apparent later that stability and local Lipschitz continuity will only be exploited for the differences $\enorm{u^k_\coarse- u^{k-1}_\coarse}$, $\enorm{u^\exact_\coarse-u^{k+1}_\coarse}$, or $\enorm{u^\exact- u^\exact_\coarse}$ in~\eqref{assumption:stab},~\eqref{eq:equivalence}, and~\eqref{eq:f4}.
\end{remark}

%%%%%%%%%%%%%%%%%%%%%%%%%%%%%%%%%%%%%%%%%%%%%%%%%%%%%%%%%%%%%%%%
\subsection{Main results}\label{subsection:mainres}
%%%%%%%%%%%%%%%%%%%%%%%%%%%%%%%%%%%%%%%%%%%%%%%%%%%%%%%%%%%%%%%%
Given the Pythagoras identity~\eqref{eq:qo:abstract} and energy contraction~\eqref{crucial:energycontractivity}, the first main theorem states full linear convergence of the quasi-error
\begin{align}\label{eq:quasi-error}
	\Delta_\ell^k	\coloneqq \enorm{u^\exact - u_\ell^{k}} + \eta_\ell(u_\ell^{k}).
\end{align}

\begin{theorem}[full linear convergence]\label{theorem:fulllinear}
	Suppose that $\AA$ satisfies~\eqref{eq:strongly-monotone},~\eqref{eq:locally-lipschitz}, and \eqref{eq:potential}. Suppose the axioms of adaptivity~\eqref{assumption:stab}--\eqref{assumption:rel} and orthogonality~\eqref{eq:qo:abstract}, where $\XX_\coarse$ is understood as $\XX_\ell$ for $(\ell,k) \in \QQ$. Let $0 < \theta \le 1$, $1 \le \Cmark \le \infty$, and $\lambda > 0$. Suppose that the choice of $\delta  >0$ guarantees that Algorithm~\ref{algorithm:idealized} satisfies energy contraction~\eqref{crucial:energycontractivity}.
	Then, there exist $\Clin > 0$ and $0 < \qlin < 1$ such that Algorithm~\ref{algorithm:idealized} leads to
	\begin{align}\label{eq:linear}
		\Delta_\ell^k \le \Clin \qlin^{|(\ell, k)|- |(\ell^\prime, k^\prime)|} \, \Delta^{k^\prime}_{\ell^\prime} \quad\text{ for all } (\ell, k), (\ell^\prime, k^\prime)  \in \QQ \text{ with } (\ell^\prime, k^\prime) < (\ell, k).
	\end{align}
	The constants $\Clin$ and $\qlin$ depend only on $M$, $L[\tau/2]$, $\alpha$, $\Cstab[\tau]$, $\qred$, $\Crel$, and $\qE$ as well as on the adaptivity parameters $0 < \theta \le 1$ and $\lambda > 0$. \hfill \qed
\end{theorem}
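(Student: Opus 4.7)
The plan is to adapt the by-now classical strategy from~\cite{ghps2021,hpw2021}: combine a contraction estimate for the inner Zarantonello loop with an outer estimator-reduction argument, and glue the two into a single geometric decay along the lexicographic counter $|(\ell,k)|$.

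First, I would translate the assumed energy contraction~\eqref{crucial:energycontractivity} into a norm contraction of the inner iteration. Since~\eqref{eq:uniform:const} provides the uniform bound $\vartheta = \tau$ for which the equivalence~\eqref{eq:equivalence} applies, one obtains, for all $(\ell,k),(\ell,k')\in\QQ$ with $k \ge k'$, the estimate $\enorm{u_\ell^\exact - u_\ell^k}^2 \le (2/\alpha)\, [\EE(u_\ell^k) - \EE(u_\ell^\exact)] \le (L[\tau]/\alpha)\,\qE^{2(k-k')}\,\enorm{u_\ell^\exact - u_\ell^{k'}}^2$. Combined with stability~\eqref{assumption:stab} (which, thanks to Remark~\ref{remark:stopping}(ii), is available with the uniform constant $\Cstab[\tau]$) and the triangle inequality, this yields geometric decay of the quasi-error $\Delta_\ell^k$ within each $\ell$-batch.

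Second, I would establish contraction of the quasi-error across accepted iterates. The stopping criterion~\eqref{eq:stoppingcrit} together with~\eqref{eq:f4} implies $\enorm{u_\ell^{\kMAX} - u_\ell^{\kMAX-1}} \lesssim \lambda\,\eta_\ell(u_\ell^{\kMAX})$; combined with inner norm contraction, this controls $\enorm{u_\ell^\exact - u_\ell^{\kMAX}}$ by a constant multiple of $\lambda\,\eta_\ell(u_\ell^{\kMAX})$. Stability~\eqref{assumption:stab} then allows $u_\ell^{\kMAX}$ to be replaced by $u_\ell^\exact$ inside the estimator up to an absorbable remainder. The classical estimator-reduction argument, combining~\eqref{assumption:stab}--\eqref{assumption:rel} with the D\"orfler property guaranteed by Algorithm~\ref{algorithm:idealized}(iii), then yields, for a suitable weight $\mu > 0$ and some $0 < q < 1$, the outer contraction $\enorm{u^\exact - u_{\ell+1}^\exact}^2 + \mu\,\eta_{\ell+1}(u_{\ell+1}^\exact)^2 \le q^2\,\bigl[\enorm{u^\exact - u_\ell^\exact}^2 + \mu\,\eta_\ell(u_\ell^\exact)^2\bigr]$.

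The last step is to glue the two contractions into full linear convergence~\eqref{eq:linear}. The cleanest route is to prove the tail-summability $\sum_{(\ell',k') > (\ell,k)} \Delta_{\ell'}^{k'} \le C\,\Delta_\ell^k$ and then invoke the equivalence \emph{tail-summability $\Longleftrightarrow$ full linear convergence} established in~\cite{ghps2021}. The tail splits into a geometric sum inside each $\ell$-batch, controlled by the inner norm contraction, plus a geometric series over $\ell$-batches, controlled by the outer contraction; the transition from batch $\ell$ to batch $\ell+1$ is absorbed via $u_{\ell+1}^0 = u_\ell^{\kMAX}$ and the stopping criterion. The main obstacle is carrying this out without any further smallness assumption on $\theta$ or $\lambda$, i.e., ensuring that the absorption constant appearing in the outer contraction and the geometric factor from the inner contraction combine into a single uniform rate $\qlin < 1$ for \emph{arbitrary} $0 < \theta \le 1$ and $\lambda > 0$, as claimed in the theorem.
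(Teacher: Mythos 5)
There is a genuine gap in your Step~1, and it propagates through the rest of the argument. You claim that energy contraction together with stability ``yields geometric decay of the quasi-error $\Delta_\ell^k$ within each $\ell$-batch''. This is false: within a fixed batch the quantity $\Delta_\ell^k = \enorm{u^\exact - u_\ell^k} + \eta_\ell(u_\ell^k)$ converges, as $k$ increases, to the positive limit $\enorm{u^\exact - u_\ell^\exact} + \eta_\ell(u_\ell^\exact)$, because the discretization error is a constant floor that the linearization cannot remove. What does decay geometrically is $\enorm{u_\ell^\exact - u_\ell^k}$, and that is not the quasi-error. Consequently the ``geometric sum inside each $\ell$-batch'' you invoke in your gluing step does not exist, and your tail-summability estimate would blow up whenever $\kmax(\ell)$ is large --- which you have no control over yet.

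The paper resolves precisely this difficulty by working with the weighted quantity $\Lambda_\ell^k = \dist(u^\star, u_\ell^k)^{2} + \mu\,\eta_\ell(u_\ell^k)^2$ (with $\dist$ the square-root of energy differences) and by exploiting that for any $(\ell,k+1)\in\QQ$ the stopping criterion~\eqref{eq:stoppingcrit} must have \emph{failed} at iterate $k+1$; this failure provides the estimator bound that makes contraction possible. Crucially, the extended stopping criterion has two conjuncts, so failure splits into cases, and the case that is new for locally Lipschitz operators --- energy criterion satisfied but $\enorm{u_\ell^{k+1}} > 2M$ --- is nowhere addressed in your proposal. There the estimator is not controlled by the linearization increment, and the paper instead uses that $\enorm{u_\ell^{k+1}} > 2M$ forces $M < \enorm{u_\ell^\exact - u_\ell^{k+1}}$, hence $1 < C_6\,\dist(u_\ell^\star,u_\ell^k)^2$, which upgrades an additive constant in the estimator bound into a term proportional to $\dist(u_\ell^\star,u_\ell^k)^2$ that can be absorbed. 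Your plan follows the architecture of~\cite{ghps2021} but only handles the parts that already appear there; without the case analysis of~\eqref{eq:stoppingcrit} --- in particular the $\mathtt{true}\land\mathtt{false}$ case and the auxiliary estimate~\eqref{eq:full-linear:aux3} --- the argument does not close, and no amount of juggling with $\mu$, $\eps$, $\omega$ at the end will fix the missing estimate.
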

The proof of Theorem~\ref{theorem:fulllinear} extends that of~\cite[Theorem~4]{ghps2021}, since the stopping criterion from Algorithm~\ref{algorithm:idealized}{\eqref{eq:stoppingcrit}} requires further analysis to cover all cases. 
	To ease notation, we introduce the shorthand
	\begin{align*}
		\dist(v, w)^2 = | \EE(v) - \EE(w)| \quad \text{ for all} v, w \in \XX.
	\end{align*}	
	The following lemma provides the essential step in the proof of Theorem~\ref{theorem:fulllinear}. 
\begin{lemma}\label{prop:lambda}
	Under the assumptions of Theorem~\ref{theorem:fulllinear}, there exist constants $\mu > 0$ and $0 < \qlin < 1$ such that 
	\begin{align}\label{eq:lambda}
		\Lambda_\ell^k \coloneqq \dist(u^\star, u_\ell^k)^{2} + \mu \, \eta_\ell(u_\ell^k)^2
		\quad \text{for all } (\ell, k) \in \QQ
	\end{align}
	satisfies the following statements~{\rm(i)}--{\rm(ii)}:
	\begin{itemize}
		\item[\rm(i)] $\Lambda_\ell^{k+1} \le \qlin^2 \, \Lambda_\ell^k$ \quad  \kern10pt for all $(\ell,k+1) \in \QQ$.
		\item[\rm(ii)] $\Lambda_{\ell+1}^0 \mkern1mu \le \qlin^2 \, \Lambda_\ell^{\k-1}$ \quad  for all $(\ell+1,0) \in \QQ$.
	\end{itemize}
	The constants $\mu$ and $\qlin$ depend only on $M$, $L[2M]$, $\alpha$, $\Cstab[\tau]$, $\qred$, $\Crel$, and $\qE$ as well as on the adaptivity parameters $0 < \theta \le 1$ and $\lambda > 0$.
\end{lemma}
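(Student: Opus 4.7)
The plan is to prove statements~(i) and~(ii) separately, each time choosing $\mu$ small enough so that a common contraction rate $\qlin < 1$ can be extracted. Throughout I would systematically exploit the Pythagoras identity~\eqref{eq:qo:abstract}, rewritten as $\dist(u^\star, v_\ell)^2 = \dist(u_\ell^\exact, v_\ell)^2 + \dist(u^\star, u_\ell^\exact)^2$, the energy-norm equivalence~\eqref{eq:equivalence}, reliability~\eqref{assumption:rel}, and the lower bound~\eqref{eq:f4}, in order to pass freely between $\dist(\cdot,\cdot)^2$, squared energy norms, and the estimator.

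For~(i), let $(\ell,k+1)\in\QQ$, so the stopping criterion~\eqref{eq:stoppingcrit} has failed at step~$k+1$. By Lemma~\ref{cor:zarantonello:termination}, the norm condition $\enorm{u_\ell^{k+1}}\le 2M$ is met for all $k+1\ge k_0$; hence for such indices the failure must read $\lambda^2\,\eta_\ell(u_\ell^{k+1})^2 < \EE(u_\ell^k)-\EE(u_\ell^{k+1}) = \dist(u^\star,u_\ell^k)^2-\dist(u^\star,u_\ell^{k+1})^2$. Combining this with the consequence $\dist(u^\star,u_\ell^{k+1})^2 \le \qE^2\,\dist(u^\star,u_\ell^k)^2 + (1-\qE^2)\,\dist(u^\star,u_\ell^\exact)^2$ of energy contraction~\eqref{crucial:energycontractivity}, and then controlling the residual $\dist(u^\star,u_\ell^\exact)^2$ via reliability and stability~\eqref{assumption:stab} by a linear combination of $\eta_\ell(u_\ell^k)^2$ and $\dist(u^\star,u_\ell^k)^2$, I would obtain $\Lambda_\ell^{k+1}\le \qlin^2\,\Lambda_\ell^k$ for $\mu$ small enough. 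The finitely many initial indices $k<k_0$ do not enter this estimate and can be absorbed in $\Clin$ when proving Theorem~\ref{theorem:fulllinear}.

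For~(ii), let $(\ell+1,0)\in\QQ$, so $u_{\ell+1}^0 = u_\ell^{\kmax}$ and the stopping criterion~\eqref{eq:stoppingcrit} does hold at step~$\kmax$. Since $u_\ell^{\kmax}\in\XX_\ell\subseteq\XX_{\ell+1}$, stability~\eqref{assumption:stab} with $v_\fine = v_\coarse = u_\ell^{\kmax}$ collapses to the equality $\eta_{\ell+1}(\TT_\ell\cap\TT_{\ell+1},u_\ell^{\kmax}) = \eta_\ell(\TT_\ell\cap\TT_{\ell+1},u_\ell^{\kmax})$. Together with reduction~\eqref{assumption:red} on $\TT_{\ell+1}\setminus\TT_\ell$ and the D\"orfler property $\theta\,\eta_\ell(u_\ell^{\kmax})^2 \le \eta_\ell(\MM_\ell,u_\ell^{\kmax})^2 \le \eta_\ell(\TT_\ell\setminus\TT_{\ell+1},u_\ell^{\kmax})^2$, this yields the familiar mesh-refinement contraction $\eta_{\ell+1}(u_{\ell+1}^0)^2 \le \bigl[1-(1-\qred^2)\theta\bigr]\,\eta_\ell(u_\ell^{\kmax})^2$. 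I would then use stability together with~\eqref{eq:f4} and the valid stopping criterion to control $\eta_\ell(u_\ell^{\kmax})^2$ by $\eta_\ell(u_\ell^{\kmax-1})^2$ plus a multiple of $\dist(u^\star,u_\ell^{\kmax-1})^2-\dist(u^\star,u_\ell^{\kmax})^2$. Combined with the energy-monotonicity bound $\dist(u^\star,u_\ell^{\kmax})^2\le\dist(u^\star,u_\ell^{\kmax-1})^2$ and a sufficiently small $\mu$, these estimates assemble into $\Lambda_{\ell+1}^0\le\qlin^2\,\Lambda_\ell^{\kmax-1}$.

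The main obstacle is the simultaneous choice of $\mu$ and $\qlin$: part~(i) essentially requires $\mu\lesssim\lambda^2$ so that the estimator in $\Lambda_\ell^{k+1}$ is absorbed by the energy drop delivered by the failed stopping criterion, while part~(ii) imposes a further upper bound on $\mu$ in terms of $\qred^2$, $\theta$, $\Cstab[\tau]^2$ and $\lambda^2$, and both inequalities must share a single rate $\qlin<1$. This forces a delicate book-keeping of the constants $M$, $L[2M]$, $\alpha$, $\Cstab[\tau]$, $\qred$, $\Crel$ and $\qE$. A secondary technical point is that the extended stopping criterion~\eqref{eq:stoppingcrit} could in principle postpone termination even when the energy-drop condition already holds; Lemma~\ref{cor:zarantonello:termination} shows that this can occur only for the first $k_0$ indices, independently of $\ell$, so the derivation of~(i) for $k+1\ge k_0$ is unaffected and the remaining cases are absorbed into $\Clin$.
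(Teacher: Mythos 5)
Your proposal handles the cases where the \emph{energy} part of the stopping criterion fails (your quoted argument with $\lambda^2\eta_\ell(u_\ell^{k+1})^2<\EE(u_\ell^k)-\EE(u_\ell^{k+1})$) and the mesh-refinement step~(ii) roughly in line with the paper, but it leaves a genuine hole in case~(i) that cannot be waved away. The stopping criterion~\eqref{eq:stoppingcrit} is a conjunction, so the inner loop also continues whenever the energy criterion \emph{does} hold but the norm criterion $\enorm{u_\ell^{k+1}}\le 2M$ fails (the paper's case ``$\mathtt{true\land false}$''). Your argument gives no contraction estimate in that case; you only note that such indices satisfy $k+1<k_0$ and then assert that they ``can be absorbed in $\Clin$''. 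That absorption is not valid for the statement of Lemma~\ref{prop:lambda}: the lemma demands $\Lambda_\ell^{k+1}\le\qlin^2\Lambda_\ell^k$ for \emph{all} $(\ell,k+1)\in\QQ$, and even if you weaken to the conclusion of Theorem~\ref{theorem:fulllinear}, up to $k_0-1$ uncontrolled steps can occur at \emph{every} level $\ell$, so the accumulated multiplicative excess would grow like $C^{(k_0-1)\ell}$ and destroy full linear convergence.

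The paper closes exactly this gap with a short constructive argument that you are missing: if $\enorm{u_\ell^{k+1}}>2M$, then by $\enorm{u_\ell^\exact}\le M$ one has $M<\enorm{u_\ell^\exact-u_\ell^{k+1}}$, and via the energy equivalence~\eqref{eq:equivalence} and energy contraction~\eqref{crucial:energycontractivity} this yields the lower bound
\begin{align*}
1 < C_6\,\dist(u_\ell^\star,u_\ell^k)^2
\end{align*}
for a constant $C_6$ depending only on $\alpha$, $M$, $\qE$. That lower bound is precisely what allows the additive constant arising from estimator quasi-monotonicity (the paper's $C_3$, from~\eqref{eq:estimator:mon:aux}) to be absorbed into a multiple of $\dist(u_\ell^\star,u_\ell^k)^2$, so the contraction $\Lambda_\ell^{k+1}\le(1-\eps)\Lambda_\ell^k$ also holds in the ``$\mathtt{true\land false}$'' case, with the \emph{same} $\mu$ and $\qlin$. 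Without this observation the case split is incomplete, so as written the proposal does not prove the lemma.

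One further minor remark: in part~(ii), the identity you use, that stability collapses to equality on $\TT_\ell\cap\TT_{\ell+1}$ because $u_\ell^{\kmax}\in\XX_{\ell+1}$, is not something \eqref{assumption:stab} gives you (stability only supplies an inequality and does not assert that $\eta_{\ell+1}(T,\cdot)$ and $\eta_\ell(T,\cdot)$ coincide on unrefined $T$). The standard estimator-reduction argument you want (the paper's reference to \cite[Lemma~10(ii)]{ghps2021} and~\eqref{eq:generalized_estimator_reduction}) does not need that equality; it combines stability, reduction, and D\"orfler marking into
\begin{align*}
\eta_{\ell+1}(u_{\ell+1}^{\kmax(\ell+1)})^2\le \qest\,\eta_\ell(u_\ell^{\kmax})^2+\Cest\,\enorm{u_{\ell+1}^{\kmax(\ell+1)}-u_\ell^{\kmax}}^2,
\end{align*}
so you should phrase that step as an inequality rather than as an equality of estimator contributions.
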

\begin{proof}
For $k\in \N$ such that $1 \le k \le \kmaxl$, the stopping criterion of Algorithm~\ref{algorithm:idealized}{\eqref{eq:stoppingcrit}}, i.e.,
		\begin{align}\tag{i.b} \label{eq:fulllinear:stoppingcrit}
		\dist(u^{k-1}_\ell, u_\ell^k)^{2} = |\EE(u^{k-1}_\ell) - \EE(u^{k}_\ell)| \le  \lambda^2 \, \eta_\ell(u^k_\ell)^{2} \quad \land \quad \enorm{u^k_\ell} \le 2M,
		\end{align}
		comprises four cases. Statement~{\rm(i)} contains the cases $\mathtt{true \land false}$, $\mathtt{false \land false}$, and $\mathtt{false \land true}$. Statement~{\rm(ii)} consists of the remaining case $\mathtt{true \land true}$.
		
 {\bfseries Case 1: Evaluation of~(\ref{eq:fulllinear:stoppingcrit}) returns $\mathtt{true \land false}$.}  This case investigates~\eqref{eq:fulllinear:stoppingcrit} for $k +1 < \kmaxl$. First, we note that
\begin{align*}
\enorm{u^\star - u_\ell^\star}^2
\eqreff{assumption:rel}\le \Crel^2 \, \eta_\ell(u_\ell^\star)^2 \, \quad
\stackrel{\mathclap{\eqref{assumption:stab}, \eqref{eq:tau:def}}}\le \,  \quad 2 \, \Crel^2 \, \eta_\ell(u_\ell^{k+1})^2
+ 2 \, \Crel^2 \, \Cstab^2[\tau] \, \enorm{u_\ell^\star - u_\ell^{k+1}}^2.
\end{align*}
Together with~\eqref{eq:equivalence}, this leads us to
\begin{align*}
\nonumber
\dist(u^\star, u_\ell^\star)^{2}
\eqreff{eq:equivalence}\le \frac{L[2M]}{2} 
\enorm{u^\star - u_\ell^\star}^2
&\eqreff{eq:equivalence}\le C_1\,\eta_\ell(u_\ell^{k+1})^2 
+ C_2 \,  \dist(u_\ell^\star, u_\ell^{k+1})^{2},
\end{align*}
where we define $C_1 \coloneqq L[2M] \Crel^2$ and $C_2 \coloneqq 2\,  \alpha^{-1} L[2M]\Crel^2 \Cstab^2[\tau]$. For $0 < \varepsilon < 1$, we obtain that
\begin{align*}
\dist(u^\star, u_\ell^{k+1})^{2}
&\stackrel{\mathclap{\eqref{eq:qo:abstract}}}= 
(1-\eps) \, \dist(u^\star, u_\ell^\star)^{2} + \eps \, \dist(u^\star, u_\ell^\star)^{2}
+ \dist(u_\ell^\star, u_\ell^{k+1})^{2}
\\& 
\le\, 
(1-\eps) \, \dist(u^\star, u_\ell^\star)^{2}
+ \eps \, C_1 \, \eta_\ell(u_\ell^{k+1})^2 + (1 + \eps \, C_2) \, \dist(u_\ell^\star, u_\ell^{k+1})^{2}
\\&
\stackrel{\mathclap{\eqref{crucial:energycontractivity}}}\le 
(1-\eps) \, \dist(u^\star, u_\ell^\star)^{2}
+ \eps \, C_1 \, \eta_\ell(u_\ell^{k+1})^2 + (1 + \eps \, C_2) \, \qE^2 \, \dist(u_\ell^\star, u_\ell^k)^{2}.
\end{align*}
We use the last inequality for the quasi-error $\Lambda_{\ell}^{k+1}$ to obtain that
\begin{align}\label{eq:full-linear:main}
\Lambda_\ell^{k+1} 
&= \dist(u^\star, u_\ell^{k+1})^{2} + \mu \, \eta_\ell(u_\ell^{k+1})^2 
\notag \\& 
\le (1-\eps) \, \dist(u^\star, u_\ell^\star)^{2}
+ (\mu + \eps \, C_1) \, \eta_\ell(u_\ell^{k+1})^2 + (1 + \eps \, C_2) \, \qE^2 \, \dist(u_\ell^\star, u_\ell^k)^{2}.
\end{align}
We need four auxiliary estimates: 

First, 
	since $\enorm{u^\exact_\ell} \le M$ and $\enorm{u^\exact_\ell - u^\exact_0} \le 2M$ hold independently of $\ell$, the axioms~\eqref{assumption:stab}--\eqref{assumption:rel} and Proposition~\ref{prop:existence} imply quasi-monotonicity of the estimators, i.e., 
\begin{align}\label{eq:estimator:mon}
	\eta_\ell(u^\exact_\ell) \le \Cmon \eta_0(u^\exact_0) \quad \text{ with } \quad \Cmon = \big(2 + 8 \Cstab[2M]^{2}(1+\Ccea^2) \Crel^2\big)^{1/2};
\end{align}
cf.~\cite[Lemma~3.6]{axioms}. With ${C_0} \coloneqq \Cmon \max\{1, \Cstab[M] M\}$, we infer that
\begin{align}\label{eq:estimator:mon:aux}
	\eta_\ell(u^\exact_\ell) \stackrel{\eqref{eq:estimator:mon}}{\le} \Cmon \eta_0(u^\exact_0) \stackrel{\eqref{assumption:stab}}{\le} \Cmon \eta_0(0) + \Cmon \Cstab[M] \enorm{u^\exact_0} \le C_0(\eta_0(0) + 1). 
\end{align}

Second, with $C_3 \coloneqq2 \,C_0 (\eta_0(0) + 1)$ and $C_4 \coloneqq 4 \, \alpha^{-1} \,  \Cstab[\tau]^2 \, \qE^2$, it holds that
\begin{align}\label{eq:full-linear:aux1}
\eta_\ell(u_\ell^{k+1})^2
& \,	\stackrel{\mathclap{\eqref{assumption:stab}}}\le 2 \,  \eta_\ell(u_\ell^{\exact})^2
+ 2\,  \Cstab[\tau]^2 \, \enorm{u_\ell^{\exact}-u_\ell^{k+1}}^2 \, \stackrel{\mathclap{\eqref{eq:equivalence}}}\le 2 \,  \eta_\ell(u_\ell^{\exact})^2
+ \frac{4}{\alpha} \,  \Cstab[\tau]^2 \,  \dist(u_\ell^\star, u_\ell^{k+1})^{2} \notag \\
&\, \stackrel{\mathclap{\eqref{crucial:energycontractivity}}}\le \,2\,  \eta_\ell(u_\ell^{\exact})^2 + \frac{4}{\alpha} \,  \Cstab[\tau]^2 \, \qE^2\, \dist(u_\ell^\star, u_\ell^{k})^{2} \, \stackrel{\mathclap{\eqref{eq:estimator:mon:aux}}}\le C_3 + C_4 \,\dist(u_\ell^\star, u_\ell^{k})^{2}.
\end{align}

Third, the error estimator allows for the following estimate with an arbitrary but fixed Young parameter $0 < \gamma < 1$:
\begin{align}\label{eq:full-linear:aux2}
\eta_\ell(u_\ell^{k+1})^2
\,& 	\stackrel{\mathclap{\eqref{assumption:stab}}}\le \, (1+ \gamma) \,  \eta_\ell(u_\ell^{k})^2
+ (1+ \gamma^{-1}) \,  \Cstab[\tau]^2 \, \enorm{u_\ell^{k+1} - u_\ell^{k}}^2 \notag \\
&\le (1+ \gamma) \,  \eta_\ell(u_\ell^{k})^2
+ 2 \,(1+ \gamma^{-1}) \,  \Cstab[\tau]^2 \, \big[ \enorm{ u_\ell^{\exact}-u_\ell^{k+1}}^2 + \enorm{u_\ell^{\exact}-u_\ell^{k}}^2\big] \notag  \\
\,& 	\stackrel{\mathclap{\eqref{eq:equivalence}}}\le \, (1+ \gamma) \,  \eta_\ell(u_\ell^{k})^2
+ \frac{4}{\alpha} \,(1+ \gamma^{-1}) \,  \Cstab[\tau]^2 \, \big[ \dist(u_\ell^\star, u_\ell^{k+1})^{2} + \dist(u_\ell^\star, u_\ell^{k})^{2} \big] \notag  \\
\,& 	\stackrel{\mathclap{\eqref{crucial:energycontractivity}}}\le \, (1+ \gamma) \,  \eta_\ell(u_\ell^{k})^2
+C_5\,\dist(u_\ell^\star, u_\ell^{k})^{2},
\end{align}
where $C_5 \coloneqq 4\, \alpha^{-1} (1+ \gamma^{-1}) \,  \Cstab[\tau]^2 \, ( 1 + \qE^2 )$.

Fourth, we observe that the case $\mathtt{true \land false}$ yields that
\begin{align*}
2M < \enorm{u^{k+1}_\ell} \le  \enorm{u^\exact_\ell} +\enorm{u^\exact_\ell-u^{k+1}_\ell} \le M + \enorm{u^\exact_\ell- u^{k+1}_\ell}
\end{align*}
and hence $M < \enorm{u^\exact_\ell-u^{k+1}_\ell}$. With $C_6 \coloneqq 2 \, \alpha^{-1} M^{-2} \, \qE^2$, this observation leads us to
\begin{align}\label{eq:full-linear:aux3}
1 < \frac{\enorm{u^\exact_\ell-u^{k+1}_\ell}^{2}}{M^2} \, \stackrel{\eqref{eq:equivalence}}{\le} \,2 \, \alpha^{-1} M^{-2}\, \dist(u_\ell^\star, u_\ell^{k+1})^{2} \eqreff{crucial:energycontractivity}{\le} C_6 \, \dist(u_\ell^\star, u_\ell^{k})^{2}. 
\end{align}

Recall that $0< \varepsilon < 1$ and define $0 <\sigma \coloneqq \frac{\varepsilon+\gamma}{1+\gamma}<1$. This choice of $\sigma$ ensures that
\begin{align}\label{eq:full-linear:aux4}
(1-\sigma) \,(1+ \gamma) = 1-\varepsilon .
\end{align}
We apply these observations to the term $(\mu + \eps \, C_1) \, \eta_\ell(u_\ell^{k+1})^2$ of~\eqref{eq:full-linear:main} to arrive at
\begin{align}\label{eq:full-linear:mainhelp}
&(\mu + \varepsilon \, C_1) \, \eta_\ell(u_\ell^{k+1})^2 =  (1-\sigma) \,\mu \, \eta_\ell(u_\ell^{k+1})^2 + (\sigma\, \mu + \varepsilon \, C_1)\, \eta_\ell(u_\ell^{k+1})^2 \notag\\
& \mkern9mu \stackrel{\mathclap{\eqref{eq:full-linear:aux2}}}{\le} \, (1-\sigma) (1+ \gamma) \,\mu \,  \eta_\ell(u_\ell^{k})^2 
+ (1-\sigma) \,\mu \,  C_5 \, \dist(u_\ell^{\exact},u_\ell^{k})^2
+ (\sigma \, \mu+ \varepsilon \, C_1)\, \eta_\ell(u_\ell^{k+1})^2  \notag \\
&\mkern9mu  \stackrel{\mathclap{\eqref{eq:full-linear:aux1}}}{\le} \, (1-\sigma) (1+ \gamma) \,\mu \,  \eta_\ell(u_\ell^{k})^2 
+ (1-\sigma)\, \mu \,  C_5 \, \dist(u_\ell^{\exact},u_\ell^{k})^2
+ (\sigma \, \mu+ \varepsilon \, C_1) \big[C_3 + C_4 \,\dist(u_\ell^{\exact},u_\ell^{k})^2 \big] \notag \\
&\mkern9mu \stackrel{\mathclap{\eqref{eq:full-linear:aux3}}}{<} \, (1-\sigma) (1+ \gamma) \,\mu \,  \eta_\ell(u_\ell^{k})^2 
\! +\!(1-\sigma) \,\mu\, C_5 \, \dist(u_\ell^{\exact},u_\ell^{k})^2
\! +\! (\sigma \, \mu+ \varepsilon \, C_1) (C_3 C_6 + C_4) \,\dist(u_\ell^{\exact},u_\ell^{k})^2 \notag\\
&\mkern9mu \stackrel{\mathclap{\eqref{eq:full-linear:aux4}}}{=} \, (1-\varepsilon)\,\mu \,  \eta_\ell(u_\ell^{k})^2 
+ \big[ (1-\sigma) \, C_5 
+ \sigma \,C_7 \big] \mu \,\dist(u_\ell^{\exact},u_\ell^{k})^2 + \varepsilon\, C_1\,  C_7 \,\dist(u_\ell^{\exact},u_\ell^{k})^2,
\end{align}
where $C_7 \coloneqq C_3 C_6 + C_4$. 
Together with~\eqref{eq:full-linear:main}, we obtain that
\begin{align*}
\Lambda_\ell^{k+1} 
&\,\stackrel{\mathclap{\eqref{eq:full-linear:main}}}{\le} \, (1-\eps) \, \dist(u^\star, u_\ell^\star)^{2}
+ (\mu + \eps \, C_1) \, \eta_\ell(u_\ell^{k+1})^2 + (1 + \eps \, C_2) \, \qE^2 \, \dist(u_\ell^\star, u_\ell^k)^{2} \\
&\,\stackrel{\mathclap{\eqref{eq:full-linear:mainhelp}}}{\le} \, (1-\eps) \, \dist(u^\star, u_\ell^\star)^{2}
+ \, (1-\varepsilon)\,\mu \,  \eta_\ell(u_\ell^{k})^2 \\
& \quad \quad
+  \big\{ \big[(1-\sigma) \, C_5 
+ \sigma \,C_7 \big] \mu \,+ \varepsilon\, C_1\,  C_7 + (1 + \eps \, C_2) \, \qE^2  \big\} \, \dist(u_\ell^\star, u_\ell^k)^{2} \\
&\,\le \, (1-\eps) \, \dist(u^\star, u_\ell^\star)^{2}
+ \, (1-\varepsilon)\,\mu \,  \eta_\ell(u_\ell^{k})^2 \\
& \quad \quad
+  \big\{\mu \, \max{\{C_5, C_7\}} + \varepsilon\, C_1\,  C_7 + (1 + \eps \, C_2) \, \qE^2  \big\} \, \dist(u_\ell^\star, u_\ell^k)^{2}.
\end{align*}
Note that $C_1, \ldots, C_7$ depend only on the problem setting. Provided that
\begin{align}\label{eq:full-linear:condition:tf}
\mu \, \max{\{C_5, C_7\}} + \varepsilon\, C_1\,  C_7 + (1 + \eps \, C_2) \, \qE^2 \le 1 - \varepsilon,
\end{align}
we conclude that
\begin{align*}
\Lambda_\ell^{k+1} &\le (1-\eps) \, \big[ \dist(u^\star, u_\ell^\star)^{2} + \dist(u_\ell^\star, u_\ell^k)^{2}  + \mu \,  \eta_\ell(u_\ell^{k})^2\big] \\
&\stackrel{\mathclap{\eqref{eq:qo:abstract}}}{=} (1-\eps) \, \big[ \dist(u^\star, u_\ell^k)^{2} + \mu \,  \eta_\ell(u_\ell^{k})^2 \big]
= (1-\eps) \, \Lambda_\ell^k.
\end{align*}

{\bf Case 2: Evaluation of~{(\ref{eq:fulllinear:stoppingcrit})} returns $\mathtt{false \land false}$ or $\mathtt{false \land true}$.} These cases follow from the arguments found in~\cite[Lemma~10{\rm(i)}]{ghps2021}. There, the proof is based in essence on the estimate  
\begin{align*}
\eta_\ell(u_\ell^{k+1})^2 
< \lambda^{-2} \, \dist(u_\ell^{k+1}, u_\ell^{k})^2
\stackrel{\eqref{crucial:energycontractivity}}\le \lambda^{-2} \, (1+\qE^2) \, \dist(u_\ell^\star, u_\ell^{k})^2,
\end{align*}
to obtain an upper bound of the quasi-error $\Lambda_\ell^{k+1}$ in terms of the linearization error $\dist(u^\exact_\ell, u_\ell^k)^2$. With $C_8 \coloneqq \lambda^{-2}\, (1+\qE^2)$ and provided that
\begin{align}\label{eq:full-linear:condition:ff}
(\mu + \eps \, C_1) \, C_8 + (1 + \eps \, C_2) \, \qE^2 \le 1-\eps,
\end{align}
\cite[Lemma~10{\rm(i)}]{ghps2021} then proves that
\begin{align*}
	\Lambda_\ell^{k+1} \le (1-\eps) \Lambda_\ell^k.
\end{align*}
Up to the final choice of $\mu, \varepsilon > 0$, this concludes the proof of~these cases and~statement~{\rm (i)}.

{\bf Case 3: Evaluation of~(\ref{eq:fulllinear:stoppingcrit}) returns $\mathtt{true \land true}$.}  
The case $\mathtt{true \land true}$ is analyzed in~\cite[Lemma~10{\rm(ii)}]{ghps2021} and is based on the contractivity of the error estimator given that the Dörfler marking is employed.

Define $q_\theta \coloneqq \big( 1 - (1-\qred^2) \,\theta)$ and $C_9 \coloneqq 4 \,\alpha^{-1}(1 + \qE^2)\, \Cstab[\tau]^2$. Let $0< \omega<1$ be arbitrary. Note that $C_1, C_2, C_{9}>0$ and $0 < q_\theta < 1$ depend only on the problem setting. Provided that 
\begin{align}\label{eq:full-linear:condition:tt}
\eps \, C_1 \, \mu^{-1} + q_\theta \, (1 + \delta) \le 1-\eps
\quad \text{and} \quad
\eps \, C_2 + \qE^2 + \mu \, q_\theta \, (1 + \omega^{-1}) \,C_9 \le 1-\eps,
\end{align}
we obtain from~\cite[Lemma~10{\rm(ii)}]{ghps2021} that 
\begin{align*}
&\Lambda_{\ell+1}^0 
\le  
(1-\eps) \, \Lambda_\ell^{\k-1}.
\end{align*}
Up to the final choice of $\omega, \mu, \eps > 0$, this concludes the proof of Lemma~\ref{prop:lambda}(ii).

{\bf Choice of parameters.}
We proceed as follows:
\begin{enumerate}[label={\arabic*.}]
\item Choose $\omega > 0$ such that \ $(1+\omega) \, q_\theta < 1$.
\item Choose $\mu > 0$ such that $\qE^2 +  \mu\, \max\{C_5, C_7\} < 1$, $\qE^2+ \mu C_8 < 1$, \linebreak\ and \ $\qE^2 + \mu \, q_\theta(1+\omega)^{-1}  C_9 < 1$.
\item Finally, choose $\eps > 0$ sufficiently small such that~\eqref{eq:full-linear:condition:tf}--\eqref{eq:full-linear:condition:tt} are satisfied.
\end{enumerate}
This concludes the proof of Lemma~\ref{prop:lambda} with $\qlin^2 \coloneqq (1-\eps)$.
\end{proof}
\begin{proof}[Proof of Theorem~\ref{theorem:fulllinear}]
According to~\eqref{eq:equivalence}, it holds that 
$\Delta_\ell^k \simeq (\Lambda_\ell^k)^{1/2}$,
where the hidden constants depend only on $\mu$, $\alpha$, and $L[\tau/2]$. We use~\eqref{eq:equivalence} for the term $\enorm{u^\exact_\ell - u^k_\ell}$, and hence the dependency $L[\tau/2]$ is justified by~\eqref{eq:inductivearg:uniform}. Then, linear convergence~\eqref{eq:linear} follows from Lemma~\ref{prop:lambda} and induction, since the set $\QQ$ is linearly ordered with respect to the total step counter $|(\cdot,\cdot)|$.
\end{proof}
\begin{remark}
	
	{\rm{(i)}}~Provided that energy contraction~\eqref{crucial:energycontractivity} holds and that the adaptivity parameter $\lambda >0$ is sufficiently small, the stopping criterion 
	\begin{align}\label{eq:stopping:alternative}
		|\EE(u^{k-1}_\ell) - \EE(u^{k}_\ell)| \le  \lambda^2\, \eta_\ell(u^k_\ell)^{2} \tag{\rm i.b$'$}
	\end{align} 
	from~\cite{ghps2021} is a viable alternative to the stopping criterion of Algorithm~\ref{algorithm:idealized}{\eqref{eq:stoppingcrit}}. The main difficulty is to ensure nested iteration~\eqref{crucial:nestediteration}. This relies, in essence, on the estimate
	\begin{align*}	
		\frac{\alpha}{2}\,\enorm{u_\ell^\exact - u_\ell^\kk}^{2}
		\,\, & \stackrel{\mathclap{\eqref{eq:equivalence}}}{\le} \, \, \EE(u^\kk_\ell) - \EE(u^\exact_\ell)\quad \stackrel{\mathclap{\eqref{crucial:energycontractivity}, \,\eqref{eq:energycontr:triangle}}}{\le} \quad  \,   \frac{\qE[\delta]^2}{1-\qE[\delta]^2} \, \big[ \EE(u^{\kk-1}_\ell) - \EE(u^{\kk}_\ell) \big] \\
		&\mkern-54mu \stackrel{\eqref{eq:stopping:alternative}}{\le}  
		\frac{\qE[\delta]^2}{1-\qE[\delta]^2} \, \lambda^2 \, \eta_\ell(u_\ell^\kk)^2 \, \, \stackrel{\mathclap{\eqref{assumption:stab}}}{\le} 
		\, 2 \,   \frac{\qE[\delta]^2}{1-\qE[\delta]^2} \,  \lambda^2 \, \big[ \eta_\ell(u_\ell^\exact)^2\!+\! \Cstab[\tau/2]^2  \enorm{u_\ell^\exact - u_\ell^\kk}^2  \big],
	\end{align*}
where $\enorm{u^\exact_\ell - u^k_\ell} \le \tau/2$ stems from~\eqref{eq:inductivearg:uniform}.
Using a uniform estimate for the error estimator as in~\eqref{eq:estimator:mon:aux}, the last estimate, and the observation that $\enorm{u^\kmax_\ell} \le M + \enorm{u^\exact_\ell - u^\kmax_\ell}$ lead us to
\begin{align*}
	\enorm{u^0_{\ell+1}} = \enorm{u^\kmax_{\ell}} \le M + \lambda \, \frac{ r[\delta] \, C_0 \, (\eta_0(0) + 1)}{[1 -  \lambda^{2} \, r[\delta]^2\, \Cstab[\tau/2]^{2}]^{1/2}} \stackrel{!}{\le} 2M \quad \text{ with } r[\delta]^2 \coloneqq \frac{4}{\alpha} \,\frac{\qE[\delta]^2}{1-\qE[\delta]^2},
\end{align*}
where a sufficiently small $\lambda$ such that $\lambda^2 \,r[\delta]^2\, \Cstab[\tau/2]^{2} < 1$ is required and where $C_0 \coloneqq \Cmon \,\max\{ 1, \Cstab[M]\,M\}$. We see that a sufficently small $\lambda >0$ ensures nested iteration~\eqref{crucial:nestediteration}. In contrast,~\eqref{eq:stoppingcrit} leads to full linear convergence for arbitrary $\lambda > 0$.

	{\rm (ii)}
	Theorem~\ref{theorem:fulllinear} proves linear convergence, and hence in particular plain convergence $\Delta_\ell^k \to 0$ as $|(\ell, k)| \to \infty$. In Appendix~\ref{section:appendix}, it is shown that plain convergence also holds for Algorithm~\ref{algorithm:idealized} with the modified stopping criterion
		\begin{align}\tag{\rm i.b$''$}
	\enorm{u^{k}_\ell - u^{k+1}_\ell} \le \lambda \, \eta_\ell(u^k_\ell) \quad \land \quad \enorm{u^k_\ell} < 2M
		\end{align} 
	(instead of Algorithm~\ref{algorithm:idealized}{\eqref{eq:stoppingcrit}}) in the strongly monotone and locally Lipschitz continuous setting without~\eqref{eq:potential}. Due to the lack of an energy $\EE$, the result relies on norm contraction~\eqref{crucial:contractivity} instead of energy contraction~\eqref{crucial:energycontractivity}.
\end{remark}

To formulate our main result on optimal convergence rates, we need some additional notation.
For $N \in \N_0$, let $\T_N\coloneqq\set{\TT\in\T}{\#\TT-\#\TT_0\le N}$ denote the (finite) set of all refinements of $\TT_0$ which have at most $N$ elements more
than $\TT_0$. 
For $s>0$, we define
\begin{align}\label{eq:approxclass}
\norm{u^\exact}{\bbA_s} 
&\coloneqq \sup_{N\in\N_0} \Big((N+1)^s \min_{\TT_\opt\in\T_N} \big[ \enorm{u^\exact - u^\exact_\opt} + \eta_{\opt}(u^\exact_\opt) \big] \Big) \in \R_{\geq 0} \cup \{ \infty \}.
\end{align}
Here, $u^\exact_\opt \in \XX_\opt$ denotes the exact Galerkin solution~\eqref{eq:weakform:discrete} with respect to the optimal mesh $\TT_\opt$, where optimality is understood with respect to the quasi-error $\Delta_\opt^\exact$ from~\eqref{eq:quasi-error} (consisting of the energy norm error plus error estimator).
In explicit terms, $\norm{u^\exact}{\bbA_s} < \infty$ means that an algebraic convergence rate $\OO(N^{-s})$ for the quasi-error $\Delta_\opt^\exact$ is possible, if the optimal triangulations are chosen.

The second main theorem states optimal convergence rates of the quasi-error~\eqref{eq:quasi-error} with respect to the number of degrees of freedom. As usual in this context (see, e.g.,~\cite{axioms}), the result requires that the adaptivity parameters $0 < \theta \le 1$ and $\lambda > 0$ are sufficiently small. 
The proof is found in, e.g.,~\cite[Theorem~8]{ghps2021}. A careful inspection of the proof reveals that it requires only estimates of the form
\begin{align*}
\dist(u^{\kmax}_\ell, u^{\kmax-1}_\ell) \le \lambda \, \eta_\ell(u^\kmax_\ell),
\end{align*}
as well as linear convergence~\eqref{eq:linear}, which are satisfied for Algorithm~\ref{algorithm:idealized}. The results from~\cite{ghps2021} are proven for a uniform Lipschitz and stability constant; in the present setting, this follows from~Remark~\ref{remark:stopping}{\rm (ii)}.
\begin{theorem}[rate-optimality w.r.t.~degrees of freedom]\label{theorem:rates}
Suppose that $\AA$ satisfies~\eqref{eq:strongly-monotone},~\eqref{eq:locally-lipschitz}, and~\eqref{eq:potential} as well as the axioms of adaptivity~\eqref{assumption:stab}--\eqref{assumption:drel}. Suppose that the choice of $\delta  >0$ guarantees that Algorithm~\ref{algorithm:idealized} satisfies energy contraction~\eqref{crucial:energycontractivity}. Define
\begin{align}\label{def:lambda_opt}
\lambdaopt \coloneqq 
\frac{1 - \qE}{\qE \, \Cstab[\tau]}  \Big( \frac{\alpha}{2} \Big)^{1/2},
\end{align}
with $\tau$ from~\eqref{eq:tau}. Let $0 < \theta \leq 1$ and $0 < \lambda < \lambdaopt \theta$ such that
\begin{align}\label{eq:opt:theta'}
0 < \theta' \coloneqq \frac{\theta + \lambda/\lambdaopt}{1 - \lambda/\lambdaopt} < (1 + \Cstab[\tau]^2 \Crel^2)^{-1/2}.
\end{align}
Let $s > 0$.
Then, there exist $\copt, \Copt > 0$ such that
\begin{align}\label{eq:optimal:rates}
\begin{split}
\copt^{-1} \, \norm{u^\exact}{\bbA_s} 
&\le  \sup_{(\ell,k) \in \QQ} (\#\TT_{\ell} - \#\TT_0 + 1)^s \, \Delta_{\ell}^{k}
\le \Copt \, \max\{\norm{u^\exact}{\bbA_s},\Delta_0^0\},
\end{split}
\end{align}
where $\norm{u^\exact}{\mathbb{A}_s}$ is defined in~\eqref{eq:approxclass}.
The constant $\copt > 0$ depends only on $\Ccea=L[2M]/\alpha$, fine properties of NVB refinement, $\Cstab[\tau]$, $\Crel$, $\#\TT_0$, and $s$, and additionally on $\lmax$ or $\ell_0$, if $\lmax<\infty$ or $\eta_{\ell_0}(u_{\ell_0}^{\kmax})=0$ for some $(\ell_0,0)\in \QQ$, respectively.
The constant $\Copt > 0$ depends only on fine properties of NVB refinement, $\alpha$, $\Cstab[\tau]$, $\qred$, $\Crel$, $\Cdrel$, $1 - \lambda/\lambdaopt$ (and hence on energy contraction $\qE$), $\Cmark$, $\Clin$, $\qlin$, and~$s$. 
\hfill \qed
\end{theorem}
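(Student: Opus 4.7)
The plan is to mirror the rate-optimality proof of~\cite[Theorem~8]{ghps2021}, which was designed for Zarantonello AILFEM under \emph{global} Lipschitz continuity, and to verify that nothing essential breaks in the present locally Lipschitz setting. The overall structure rests on three pillars: a \emph{D\"orfler-comparison lemma} ensuring that any refinement $\TT_{\fine}$ of $\TT_\ell$ whose estimator is sufficiently smaller than $\eta_\ell(u_\ell^\exact)$ must already resolve at least a D\"orfler-$\theta'$ subset of $\TT_\ell$; the \emph{optimality of NVB mesh-closure}, controlling $\#\TT_\ell - \#\TT_0$ by the total cardinality of marked sets; and the \emph{full linear convergence}~\eqref{eq:linear} of the quasi-error along the ordered index set $\QQ$.

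The lower bound in~\eqref{eq:optimal:rates} is purely definitional once the generated triangulations $\TT_\ell$ are recognised as admissible competitors in~\eqref{eq:approxclass}, modulo the degenerate cases $\lmax<\infty$ or $\eta_{\ell_0}(u_{\ell_0}^\kmax)=0$ in which the supremum runs over finitely many indices; this accounts for the stated dependence of $\copt$ on $\lmax$ or $\ell_0$.

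For the upper bound I would first quantify the solver error in terms of the estimator: the stopping criterion~\eqref{eq:stoppingcrit} together with energy contraction~\eqref{crucial:energycontractivity}, a geometric series, and the norm-energy equivalence~\eqref{eq:equivalence} yield an estimate of the form $\enorm{u_\ell^\exact - u_\ell^\kmax} \le (\lambda/\lambdaopt)\,\Cstab[\tau]^{-1}\,\eta_\ell(u_\ell^\kmax)$ with exactly the constant $\lambdaopt$ of~\eqref{def:lambda_opt}. This is precisely the gap that the inflated marking parameter $\theta'$ of~\eqref{eq:opt:theta'} is designed to absorb, so that, via~\eqref{assumption:stab}, the D\"orfler property with parameter $\theta$ for $u_\ell^\kmax$ transfers to one with parameter $\theta'$ for $u_\ell^\exact$ and vice versa. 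The comparison lemma from~\cite[Lemma~4.14]{axioms} (invoked with the uniform radius $\tau$) then shows that the overlay $\TT_\fine$ of $\TT_\ell$ with an optimal $\TT_\opt \in \T_N$ at level $N \simeq \bigl(\eta_\ell(u_\ell^\kmax)\bigr)^{-1/s}\norm{u^\exact}{\bbA_s}^{1/s}$ triggers D\"orfler marking, and $\Cmark$-quasi-minimality of $\MM_\ell$ together with NVB closure gives $\#\TT_\ell - \#\TT_0 \lesssim \sum_{\ell'<\ell} \bigl(\eta_{\ell'}(u_{\ell'}^\kmax)\bigr)^{-1/s}\norm{u^\exact}{\bbA_s}^{1/s}$. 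Full linear convergence~\eqref{eq:linear} turns the right-hand side into a geometric sum bounded by $(\Delta_\ell^\kmax)^{-1/s}\norm{u^\exact}{\bbA_s}^{1/s}$, and a final application of~\eqref{eq:linear} extends the bound from $(\ell,\kmax)$ to intermediate iterates $u_\ell^k$ with $k<\kmax$.

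The main technical obstacle I anticipate is not any single estimate but rather the bookkeeping of constants: in the locally Lipschitz setting, every invocation of~\eqref{assumption:stab}, \eqref{eq:equivalence}, or the C\'{e}a estimate~\eqref{eq:lemma:cea} carries a radius $\vartheta$ that depends on the iterates involved, and rate-optimality requires these radii to be \emph{uniformly} bounded along $\QQ$. This is exactly what the uniform bound $\tau$ from Lemma~\ref{cor:zarantonello:termination}, together with Remark~\ref{remark:stopping}(ii), supplies, so that every $L[\vartheta]$ and $\Cstab[\vartheta]$ appearing in the argument can be replaced by the fixed $L[\tau]$ and $\Cstab[\tau]$. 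Once this uniformisation is in place, the remaining steps follow by direct transcription of~\cite[Theorem~8]{ghps2021}.
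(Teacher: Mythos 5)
Your proposal follows exactly the route the paper itself takes: invoke \cite[Theorem~8]{ghps2021}, observe that the stopping criterion delivers $\dist(u^\kmax_\ell, u^{\kmax-1}_\ell) \le \lambda\,\eta_\ell(u^\kmax_\ell)$ and hence (via energy contraction and~\eqref{eq:equivalence}) the control of $\enorm{u_\ell^\exact - u_\ell^\kmax}$ encoded in $\lambdaopt$, and replace the iterate-dependent Lipschitz/stability radii by the fixed $\tau$ from Lemma~\ref{cor:zarantonello:termination} and Remark~\ref{remark:stopping}(ii) so that the globally-Lipschitz argument applies verbatim. Your expanded sketch of the ingredients from~\cite{ghps2021} (D\"orfler comparison on the overlay, NVB mesh-closure, geometric summation through full linear convergence, and the extension from $(\ell,\kmax)$ to intermediate iterates) is accurate and matches the paper's intent.
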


To estimate the work necessary to compute $u_{\ell}^{k}\in \XX_\ell$, we make the following assumptions which are usually satisfied in practice:
\begin{itemize}
	\item[$\triangleright$] The computation of all indicators $\eta_\ell(T,u_{\ell}^{k})$ for $T \in \TT_\ell$ requires $\OO(\#\TT_\ell)$ operations;
	
	\item[$\triangleright$] The marking in Algorithm~\ref{algorithm:idealized}{\rm(iii)} can be performed at linear cost $\OO(\#\TT_\ell)$ (cf. \cite{stevenson2007}, or the algorithm from~\cite{pp2020} providing $\MM_\ell$ with minimal cardinality);
	
	\item[$\triangleright$] We have linear cost $\OO(\#\TT_\ell)$ to generate the new mesh $\TT_{\ell+1}$ in Algorithm~\ref{algorithm:idealized}{\rm(iv)}.
\end{itemize}
In addition, we make the following \enquote{idealized} assumption, but refer to Remark~\ref{remark:linearcost:solver}{\rm (ii)}:
\begin{itemize}
\item[$\triangleright$] The solutions $u^k_\ell \in \XX_\ell$ of the linearized problems in Algorithm~\ref{algorithm:idealized}{\rm (i.a)} can be computed in linear complexity $\OO(\#\TT_\ell)$.
	\end{itemize}
Since a step $(\ell,k) \in \QQ$ of Algorithm~\ref{algorithm:idealized} depends on the full history of preceding steps, the total work spent to compute $u_{\ell}^{k} \in \XX_\ell$ is then of order
\begin{equation}
	\label{eq:work}
	\mathtt{work}(\ell,k) \coloneqq \sum_{\substack{ (\ell',k') \in \QQ \\ (\ell',k') \le (\ell,k) }} \#\TT_{\ell'}
	\quad \text{for all }
	(\ell,k) \in \QQ.
\end{equation}
\begin{remark}\label{remark:linearcost:solver}
	{\rm (i)} In order to avoid the computation of $\eta_{\ell+1}(u^k_{\ell+1})$ in each step of the inner loop, i.e., for all $k$ such that $(\ell + 1, k)\in \QQ$, one may use $\eta_\ell(u^\kmax_\ell)$ instead. While the proof of linear convergence with the adapted stopping criterion is possible, the proof of optimality remains an open question that goes beyond this work.
	
{\rm (ii)} The idealized assumption that the cost of solving the linearized discrete system in Algorithm~\ref{algorithm:idealized}{\rm (i.a)} is linear, can be avoided with an extended algorithm (and refined analysis) in the spirit of~\cite{hpsv2021}. There, an algebraic solve procedure is built into the presented adaptive algorithm as an additional inner loop, taking into account not only discretization and linearization errors but also algebraic errors. In this setting, the ``idealized'' assumption on the solver would be reduced to the assumption that one solver step has linear cost, which is feasible in the context of FEM.
	To keep the length of the present manuscript reasonable, we have decided to focus only on the linearization. The details follow along the lines of~\cite{hpsv2021} and are omitted.
\end{remark}

	The next corollary states the equivalence of rate-optimality with respect to the number of degrees of freedom and rate-optimality with respect to the total work, i.e., the overall computational cost.

\begin{corollary}[rate-optimality w.r.t.~computational cost]\label{cor:cost}
	Let $(\TT_\ell)_{\ell \in \N_0}$ be the sequence generated by Algorithm~\ref{algorithm:idealized}. Suppopse full linear convergence~\eqref{eq:linear} with respect to the quasi-error $\Delta_\ell^k$ from~\eqref{eq:quasi-error}. Then, for all $s > 0$, it holds that
	\begin{align} \label{eq:cost}
		\begin{split}
			C_{\rm rate} \coloneqq \sup_{(\ell,k) \in \QQ} (\#\TT_{\ell} - \#\TT_0 + 1)^s \, \Delta_{\ell}^{k} & \le  \sup_{(\ell,k) \in \QQ} \mathtt{work}(\ell,k)^s \, \Delta_{\ell}^{k}  
			\le  \frac{(\# \TT_0)^s \, \Clin}{\big(1-\qlin^{1/s}\big)^s} \,C_{\rm rate}.
		\end{split}
	\end{align}
Consequently, rate-optimality with respect to the number of elements~\eqref{eq:optimal:rates} yields that
	\begin{align}\label{eq:optimal:cost}
		\begin{split}
			\copt^{-1} \, \norm{u^\exact}{\bbA_s} 
			&\le \sup_{(\ell,k) \in \QQ} \mathtt{work}(\ell,k)^s \, \Delta_{\ell}^{k}
			\le \Copt \, \frac{(\# \TT_0)^s \, \Clin}{\big(1-\qlin^{1/s}\big)^s} \, \max\{\norm{u^\exact}{\bbA_s},\Delta_0^0\}.
		\end{split}
	\end{align}
\end{corollary}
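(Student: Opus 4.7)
\textbf{Proof proposal for Corollary~\ref{cor:cost}.} The lower bound in~\eqref{eq:cost} is essentially free: since NVB refinement never decreases the number of elements, we have $\#\TT_{\ell} - \#\TT_0 + 1 \le \#\TT_\ell \le \mathtt{work}(\ell,k)$ by definition~\eqref{eq:work}, so the supremum over $(\ell,k) \in \QQ$ of $(\#\TT_\ell - \#\TT_0 + 1)^s \Delta_\ell^k$ is dominated by the corresponding supremum with $\mathtt{work}(\ell,k)^s$.

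The nontrivial direction is the upper bound, and here I plan to exploit full linear convergence~\eqref{eq:linear} to ``reverse telescope'' the work sum. Fix $(\ell,k) \in \QQ$. First, I record the elementary observation that, since $\#\TT_0 \ge 1$ and $\#\TT_{\ell'} \ge \#\TT_0$,
\begin{align*}
    \#\TT_{\ell'} = \#\TT_0 + (\#\TT_{\ell'} - \#\TT_0) \le \#\TT_0 \, (\#\TT_{\ell'} - \#\TT_0 + 1).
\end{align*}
Combined with the very definition of $C_{\rm rate}$, this yields the bound $\#\TT_{\ell'} \, (\Delta_{\ell'}^{k'})^{1/s} \le \#\TT_0 \, C_{\rm rate}^{1/s}$ for every $(\ell',k') \in \QQ$. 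This is the key reformulation that turns the dof-rate bound into a pointwise bound suitable for summation.

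Next, I apply linear convergence~\eqref{eq:linear} in the form $(\Delta_\ell^k)^{1/s} \le \Clin^{1/s} \qlin^{(|(\ell,k)|-|(\ell',k')|)/s} (\Delta_{\ell'}^{k'})^{1/s}$ for all $(\ell',k') \le (\ell,k)$, and estimate
\begin{align*}
    \mathtt{work}(\ell,k) \, (\Delta_\ell^k)^{1/s}
    &= \sum_{(\ell',k') \le (\ell,k)} \#\TT_{\ell'} \, (\Delta_\ell^k)^{1/s} \\
    &\le \Clin^{1/s} \sum_{(\ell',k') \le (\ell,k)} \#\TT_{\ell'} \, (\Delta_{\ell'}^{k'})^{1/s} \, \qlin^{(|(\ell,k)|-|(\ell',k')|)/s} \\
    &\le \Clin^{1/s} \, \#\TT_0 \, C_{\rm rate}^{1/s} \sum_{j=0}^{\infty} \qlin^{j/s}
    = \frac{\Clin^{1/s} \, \#\TT_0 \, C_{\rm rate}^{1/s}}{1 - \qlin^{1/s}},
\end{align*}
where the geometric series converges precisely because $0 < \qlin < 1$. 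Raising both sides to the $s$-th power and taking the supremum over $(\ell,k) \in \QQ$ yields the second inequality in~\eqref{eq:cost}. Finally,~\eqref{eq:optimal:cost} is a direct concatenation of~\eqref{eq:cost} with the rate-optimality statement~\eqref{eq:optimal:rates} from Theorem~\ref{theorem:rates}.

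The main (and only) subtle point is the pointwise auxiliary bound $\#\TT_{\ell'} \le \#\TT_0 (\#\TT_{\ell'} - \#\TT_0 + 1)$, which is what lets us absorb $\#\TT_{\ell'}$ into the $C_{\rm rate}$ factor uniformly; without it the $\#\TT_0$ offset in the definition of $\bbA_s$ would obstruct a clean geometric-series estimate. Everything else is a standard consequence of full linear convergence, and the argument is essentially independent of the concrete structure of the algorithm beyond~\eqref{eq:linear}.
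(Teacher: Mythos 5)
Your proof is correct and follows essentially the same route as the paper's: the same key auxiliary inequality $\#\TT_{\ell'} \le \#\TT_0 \,(\#\TT_{\ell'} - \#\TT_0 + 1)$, the same use of full linear convergence to produce a geometric series in $\qlin^{1/s}$, and the same absorption of the mesh-size factor into $C_{\rm rate}$; the only cosmetic difference is that you multiply $\mathtt{work}(\ell,k)$ by $(\Delta_\ell^k)^{1/s}$ and sum directly, whereas the paper first bounds $\sum (\Delta_{\ell'}^{k'})^{-1/s}$ and then combines with the mesh-size bound.
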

\begin{proof}
	The first inequality in~\eqref{eq:cost} is obvious. To obtain the upper bound, let $(\ell,k) \in \QQ$. Elementary calculus (see~\cite[Lemma~22]{bhp2017}) proves that
	\begin{align*}
		\# \TT_{\coarse} \le \# \TT_0 \, \big( \# \TT_{\coarse} - \# \TT_0 + 1 \big) \text{ for all } \TT_{\coarse} \in \T.
\end{align*}
Moreover, linear convergence~\eqref{eq:linear} and the geometric series lead us to
\begin{align*}
	\sum_{\substack{(\ell',k') \in \QQ \\ (\ell',k') \le (\ell,k)}}(\Delta_{\ell'}^{k'})^{-1/s}  \stackrel{\eqref{eq:linear}}{\le} \Clin^{1/s} \,  (\Delta_{\ell}^{k})^{-1/s} \sum_{\substack{(\ell',k') \in \QQ \\ (\ell',k') \le (\ell,k)}} (\qlin^{1/s})^{|(\ell, k)| - |(\ell', k')|} \le \frac{\Clin^{1/s} \,  (\Delta_{\ell}^{k})^{-1/s}}{1-\qlin^{1/s}}.
\end{align*}
Combining the last two inequalities, we obtain that
	\begin{align*}
		\sum_{\substack{(\ell',k') \in \QQ \\ (\ell',k') \le (\ell,k)}} \#\TT_{\ell'} 
		 \le (\# \TT_0) \,\sum_{\substack{(\ell',k') \in \QQ \\ (\ell',k') \le (\ell,k)}}(\#\TT_{\ell'} - \#\TT_0 + 1) 
		&\le  (\# \TT_0) \,C_{\rm rate}^{1/s}\, \sum_{\substack{(\ell',k') \in \QQ \\ (\ell',k') \le (\ell,k)}}(\Delta_{\ell'}^{k'})^{-1/s} \\
		& \le (\# \TT_0) \,\frac{\Clin^{1/s}}{1-\qlin^{1/s}}\, (\Delta_{\ell}^{k})^{-1/s} \, C_{\rm rate}^{1/s}.
	\end{align*}
	Rearraging this estimate, we obtain the upper bound in~\eqref{eq:cost}.
\end{proof}

%\clearpage
%!TEX root = ailfem_semilinear.tex

%%%%%%%%%%%%%%%%%%%%%%%%%%%%%%%%%%%%%%%%%%%%%%%%%%%%%%%%%%%%%%%%
%%%%%%%%%%%%%%%%%%%%%%%%%%%%%%%%%%%%%%%%%%%%%%%%%%%%%%%%%%%%%%%%
\section{Semilinear model problem}\label{section:modelproblem}
%%%%%%%%%%%%%%%%%%%%%%%%%%%%%%%%%%%%%%%%%%%%%%%%%%%%%%%%%%%%%%%%
%%%%%%%%%%%%%%%%%%%%%%%%%%%%%%%%%%%%%%%%%%%%%%%%%%%%%%%%%%%%%%%%

%%%%%%%%%%%%%%%%%%%%%%%%%%%%%%%%%%%%%%%%%%%%%%%%%%%%%%%%%%%%%%%%
\subsection{Model problem}
%%%%%%%%%%%%%%%%%%%%%%%%%%%%%%%%%%%%%%%%%%%%%%%%%%%%%%%%%%%%%%%%
For $d \in \{1, 2, 3\}$, let $\Omega \subset \R^d$ be a bounded Lipschitz domain. 
Given $f \in L^2(\Omega)$ and $\f \in [L^2(\Omega)]^d$, we aim to approximate the weak solution $u^\exact \in \XX \coloneqq H^1_0(\Omega)$ of the semilinear elliptic PDE
\begin{align}\label{eq:strongform:primal}
 -\div(\A \nabla u^\exact) + b(u^\exact) = f - \div \f
 \text{ \ in } \Omega
 \quad \text{subject to} \quad 
 u^\exact = 0 \text{ \ on }  \partial \Omega.
\end{align}
While the precise assumptions on the coefficients $\A \colon \Omega \to \R_{\rm sym}^{d \times d}$ and $b \colon \Omega \times \R \to \R$ are given in Section~\ref{subsection:assump:diff}--\ref{subsection:assump:nonlinear},
we note that, here and below, we abbreviate $\A \nabla u^\exact \equiv \A(\cdot) \nabla u^\exact(\cdot) \colon \Omega \to \R^d$ and $b(u^\exact) \equiv b(\cdot, u^\exact(\cdot)) \colon \Omega \to \R$.

Let $\prod{\, \cdot}{\cdot \,}_\Omega$ denote the $L^2(\Omega)$-scalar product $\prod{v}{w}_\Omega \coloneqq \int_\Omega vw \d{x}$ and let $\sprod{v}{w} \coloneqq \prod{\A\nabla v}{\nabla w}_\Omega$ be the $\A$-induced energy scalar product on $H^1_0(\Omega)$. Then, the weak formulation of~\eqref{eq:strongform:primal} reads as follows: Find $u^\exact \in H^1_0(\Omega)$ such that
\begin{align}\label{eq:weakform:primal}
\mkern-10mu \prod{\AA u^\exact}{v}\!\coloneqq\!  \sprod{u^\exact}{v} \!+\! \prod{b(u^\exact)}{v}_{\Omega}\! =\! \prod{f}{v}_{\Omega}\! +\! \prod{\f}{\nabla v}_{\Omega}\! \eqqcolon\! \prod{F}{v} \text{ for all } v \in H^1_0(\Omega).
\end{align}
Existence and uniqueness of the solution $u^\exact \in H^1_0(\Omega)$ of~\eqref{eq:weakform:primal} follow from the Browder--Minty theorem on monotone operators (see Section~\ref{subsection:browder-minty} for details).

Based on conforming triangulations $\TT_\coarse$ of $\Omega$ and fixed polynomial degree $m \in \N$, let $\XX_\coarse \coloneqq \set{v_\coarse \in H^1_0(\Omega)}{\forall \,T \in \TT_\coarse\colon \, v_\coarse|_T  \text{ is a polynomial of degree} \le m}$. Then, the FEM discretization of~\eqref{eq:weakform:primal} reads: Find $u_\coarse^\exact \in \XX_\coarse$ such that
\begin{align}\label{eq:weakform:primal:discrete}
 \sprod{u_\coarse^\exact}{ v_\coarse} + \prod{b(u_\coarse^\exact)}{v_\coarse}_{\Omega} = \dual{F}{v_\coarse} 
 \quad \text{ for all } v_\coarse \in \XX_\coarse.
\end{align}
The FEM solution $u^\exact_\coarse$ approximates the sought exact solution $u^\exact$.

%%%%%%%%%%%%%%%%%%%%%%%%%%%%%%%%%%%%%%%%%%%%%%%%%%%%%%%%%%%%%%%%
\subsection{General notation}
%%%%%%%%%%%%%%%%%%%%%%%%%%%%%%%%%%%%%%%%%%%%%%%%%%%%%%%%%%%%%%%%
For $1 \le p \le \infty$, let $1 \le p' \le \infty$ be the conjugate H\"{o}lder index which ensures that $\norm{\phi \,\psi}{L^1(\Omega)} \le \norm{\phi}{L^p(\Omega)}\norm{\psi}{L^{p'}(\Omega)}$ for $\phi \in L^p(\Omega)$ and $\psi \in L^{p'}(\Omega)$, i.e., $1/p + 1/p' = 1$ with the convention that $p' = 1$ for $p=\infty$ and vice versa. Moreover, for $1 \le p < d$, let $1 \le p^\ast \coloneqq dp/(d-p) < \infty$ denote the critical Sobolev exponent of $p$ in dimension $d \in \N$.
We recall the Gagliardo--Nirenberg--Sobolev inequality (see, e.g.,~\cite[Theorem~16.6]{fk1980})
\begin{align}\label{eq:intro:gns}
	\norm{v}{L^r(\Omega)} \le \CGNS \, \norm{\nabla v}{L^p(\Omega)} \quad \text{ for all } v \in W^{1,p}_0(\Omega)
\end{align}
with a constant $\CGNS = \CGNS(|\Omega|, d, p, r)$. With $\XX = H^1_0(\Omega)$, we restrict to $p=2$.
If $d \in \{1,2\}$,~\eqref{eq:intro:gns} holds for any $1 \le r < \infty$.  
If $d =3$,~\eqref{eq:intro:gns} holds for all $1 \le r \le p^\ast =6$, where $r = p^\ast$ is the largest possible exponent such that the embedding $W^{1,p}(\Omega) \hook L^r(\Omega)$ is continuous.

%%%%%%%%%%%%%%%%%%%%%%%%%%%%%%%%%%%%%%%%%%%%%%%%%%%%%%%%%%%%%%%%
\subsection{Assumptions on diffusion coefficient}\label{subsection:assump:diff}
%%%%%%%%%%%%%%%%%%%%%%%%%%%%%%%%%%%%%%%%%%%%%%%%%%%%%%%%%%%%%%%%

The diffusion coefficient $\A \colon \Omega \to \R_{\rm sym}^{d \times d}$ satisfies the following standard assumptions:

\renewcommand{\theenumi}{\textrm{ELL}}
\begin{enumerate}
\item $\A \in L^\infty( \Omega; \R_{\rm sym}^{d\times d})$, where $\A(x) \in \R_{\rm sym}^{d\times d}$ is a symmetric and uniformly positive definite matrix, i.e., the minimal and maximal eigenvalues satisfy
\begin{align*}
 0 < \mu_0 \coloneqq \inf_{x \in \Omega} \lambda_{\rm min} (\A(x))
 \le \sup_{x \in \Omega} \lambda_{\rm max} (\A(x)) \eqqcolon \mu_1 < \infty.
\end{align*}
\label{assump:ell}
\end{enumerate}\vspace{-\baselineskip}
In particular, the $\A$-induced energy scalar product $\sprod{v}{w} \coloneqq \prod{\A\nabla v}{\nabla w}_{\Omega}$ induces an equivalent norm $\enorm{v} \coloneqq \sprod{v}{v}^{1/2}$ on $H^1_0(\Omega)$. 

To guarantee later that the residual \textsl{a~posteriori} error estimators are well-defined, we additionally require that $A|_T \in [W^{1, \infty}(T)]^{d \times d}$ for all $T \in \TT_0$, where $\TT_0$ is the initial triangulation of the adaptive algorithm.
%%%%%%%%%%%%%%%%%%%%%%%%%%%%%%%%%%%%%%%%%%%%%%%%%%%%%%%%%%%%%%%%
\subsection{Assumptions on the nonlinear reaction coefficient}\label{subsection:assump:nonlinear}
%%%%%%%%%%%%%%%%%%%%%%%%%%%%%%%%%%%%%%%%%%%%%%%%%%%%%%%%%%%%%%%%

The nonlinearity $b \colon \Omega \times \R \to \R$ satisfies the following assumptions, which follow~\cite[(A1)--(A3)]{bhsz2011}:
\renewcommand{\theenumi}{\textrm{CAR}}
\begin{enumerate}
\item $b\colon \Omega \times \R \to \R$ is a \textit{Carathéodory} function, i.e., for all $n \in \N_0$, the $n$-th derivative  $b^{(n)} \coloneqq\partial_{\xi}^n b$ of $b$ with respect to the second argument $\xi$ satisfies that
\begin{itemize}
\item[$\triangleright$] for any $\xi \in \R$, the function $x \mapsto b^{(n)}(x,\xi)$ is measurable on $\Omega$,
\item[$\triangleright$] for any $x \in \Omega$, the function $\xi \mapsto b^{(n)}(x,\xi)$ exists and is continuous in $\xi$.
\end{itemize} \label{assump:car}
\end{enumerate}
\renewcommand{\theenumi}{\textrm{MON}}
\begin{enumerate}
\item We assume monotonicity in the second argument, i.e., $b'(x, \xi)\coloneqq b^{(1)}(x, \xi) \ge 0$ for all $x \in \Omega$ and $\xi \in \R$. Without loss of generality\footnote{Otherwise, consider $\tilde{b}(v)\coloneqq b(v) - b(0)$ and $\tilde{f} \coloneqq f - b(0)$ instead.}, we assume that $b(x,0)=0$. \label{assump:mon}
\end{enumerate}%\vspace{-\baselineskip}
To establish continuity of $ v \mapsto \prod{b(v)}{w}_{\Omega}$, we impose the following growth condition on $b(v)$; see, e.g., \cite[Chapter III,~(12)]{fk1980} or~\cite[(A4)]{bhsz2011}:
\renewcommand{\theenumi}{\textrm{GC}}
\begin{enumerate}
\item If $d \in \{1,2\}$, there exists $N \in \N$ such that $1 \le N < \infty$. For $d=3$, there exists $N \in \N$ such that $1 \le N \le 5$. Suppose that, for $d \in \{1,2, 3\}$, there exists $R > 0$ such that

\begin{align}\label{eq:estimate:gc}
|b^{(N)}(x,\xi)| \le R \quad \text{ for a.e. } x \in \Omega \text{ and all } \xi \in \R.
\end{align} \label{assump:poly}
\end{enumerate}\vspace{-\baselineskip}
While~\eqref{assump:poly} turns out to be sufficient for plain convergence of the later AILFEM algorithm, we require the following stronger assumption for linear convergence and optimal convergence rates.
\renewcommand{\theenumi}{\textrm{CGC}}
\begin{enumerate}
\item There holds~\eqref{assump:poly}, if $d \in \{1,2\}$. If $d=3$, there holds~\eqref{assump:poly} with the stronger assumption $ N \in \{ 2, 3\}$. \label{assump:compact}
\end{enumerate}
\begin{remark}\label{remark:reg12}
\rm (i) Let $v, w \in H^1_0(\Omega)$. To establish continuity of $(v, w) \mapsto \prod{b(v)}{w}_{\Omega}$, we apply the H\"{o}lder inequality with H\"{o}lder conjugates $1 \le s, s' \le \infty$ to obtain that
\begin{align}
\label{rem:gc:1}
|\prod{b(v)}{w}_{\Omega}| \le \norm{b(v)}{L^{s'}(\Omega)} \norm{w}{L^{s}(\Omega)}. 
\end{align}
The smoothness assumption~\eqref{assump:car} admits a Taylor expansion for $b$. Together with $b(0)=0$ from~\eqref{assump:mon}, this yields that
	\begin{align}\label{eq:nonlinearity:taylor}
		b(v)\, \, \,\,  & \eqreff*{assump:mon}= \quad  \sum_{n=1}^{N-1} \frac{b^{(n)}(0)}{n!} \,  v^n +\bigg( \int_0^1 \frac{(1-\xi)^{N-1}}{(N-1)!}\, b^{(N)}(\xi v) \, \d{\xi}\bigg) \,v^{N}.
	\end{align}
	With $\norm{v^n}{L^{s'}(\Omega)} = \norm{v}{L^{ns'}(\Omega)}^n$, it follows that
	\begin{align*}
		\norm{b(v)}{L^{s'}(\Omega)} &\eqreff*{assump:poly}\lesssim \mkern9mu \sum_{n=1}^{N-1}  \norm{v^n}{L^{s'}(\Omega)} +\norm{v^{N}}{L^{s'}(\Omega)} = \sum_{n=1}^{N-1}  \norm{v}{L^{ns'}(\Omega)}^n +\norm{v}{L^{Ns'}(\Omega)}^{N} \\
		&\lesssim \sum_{n=1}^{N}  \norm{v}{L^{Ns'}(\Omega)}^n \le N \max\{ 1, \norm{v}{L^{Ns'}(\Omega)}^{N-1}\}\, \norm{v}{L^{Ns'}(\Omega)},
	\end{align*}
	where the second to last estimate exploits the $L^p$-space inclusions for bounded $\Omega$. To guarantee that $|\prod{b(v)}{w}_{\Omega}| < \infty$, condition~\eqref{assump:poly} should ensure that the embedding 
\begin{align}\label{eq:embedding}
H^1_0(\Omega) \hook L^{r}(\Omega) \quad \text{ is continuous } \quad \text{ for } \quad r = s \quad  \text{ and } \quad r = Ns'.
\end{align}
 If $d \in \{1,2\}$,~\eqref{eq:embedding} follows if $1 \le r < \infty$ and hence arbitrary $1 < s < \infty$ and $N \in \N$. If $d=3$, $r = s = 2^\ast=6$ is the maximal index in~\eqref{eq:embedding}. Hence, it follows that $ N \le 2^\ast/s' = 2^\ast/{2^\ast}' = 2^\ast-1 = 5$. 
 Altogether, we conclude continuity of $(v, w) \mapsto \prod{b(v)}{w}_{\Omega}$ for all $N \in \N$ if $d \in \{1,2\}$, and $N \le 5$ if $d =3$.

\rm{(ii)} The definition of~\cite[(GC)]{bbimp2022} uses 
\begin{align*}
	|b^{(n)}(x,\xi)| \le R (1+|\xi|^{N-n}) \quad \text{ for all } x \in \Omega, \text{ all } \xi \in \R, \text{ and all } 0 \le n \le N
\end{align*}
instead of~\eqref{eq:estimate:gc}. However, the following observation replaces the estimates for all $b^{(n)}$ with $0 \le n <N$. Due to the smoothness assumption~\eqref{assump:car}, we may apply a Taylor expansion for an admissible $\sigma$ such that $(N-n)\,\sigma < \infty$ if $d=1, 2$ and $(N-n)\,\sigma \le 6$ if $d=3$. Together with $\norm{v^n}{L^\sigma(\Omega)}=\norm{v}{L^{n\sigma}(\Omega)}^n$, this leads us to
\begin{align}\label{rem:estimate:derivatives}
\norm{b^{(n)}(v)}{L^\sigma(\Omega)}\,   & \le \sum_{j=n}^{N-1} \frac{b^{(j)}(0)}{(j-n)!} \,  \norm{v^{j-n}}{L^\sigma(\Omega)} +\bigg( \int_0^1 \frac{(1-\xi)^{N-1-n}}{(N-1-n)!}\, b^{(N)}(\xi v) \, \d{\xi}\bigg) \,\norm{v^{N-n}}{L^\sigma(\Omega)} \nonumber \\
& \eqreff*{assump:poly}\lesssim \, \, \sum_{j=n}^{N-1} \norm{v}{L^{(j-n)\sigma}(\Omega)}^{j-n} + \norm{v}{L^{(N-n)\sigma}(\Omega)}^{N-n} \lesssim \sum_{j=n}^{N} \norm{v}{L^{(N-n)\sigma}(\Omega)}^{j-n} \nonumber \\
&\le (N-n)\,\big( 1 + \norm{v}{L^{(N-n)\sigma}(\Omega)}^{N-n} \big) \lesssim (N-n) \, ( 1 + \enorm{v}^{N-n}),
	\end{align}
where the additive constant stems from the fact that $b^{(n)}(0) \neq 0$ in general (in contrast to the reasoning in~{\rm (i)}). This estimate plays a central role in proving the local Lipschitz continuity of $b$ and thus of the overall semilinear model problem; see Lemma~\ref{prop:energybound} below and the discussion thereafter.
\hfill \qed
\end{remark}
\subsection{Assumptions on the right-hand sides}\label{subsection:rhs}
For $d=1$, the exact solution $u^\exact$ from~\eqref{eq:weakform:primal} below satisfies an $L^\infty$-bound, since $H^1$-functions are absolutely continuous. For $d \in \{2, 3\}$, we need the following assumption:

\renewcommand{\theenumi}{\textrm{RHS}}
\begin{enumerate}
\item We suppose that the right-hand side fulfils that
\begin{align*}
\f \in [L^{p}(\Omega)]^{d} \text{ for some } p  > d \ge 2 \quad \text { and } \quad f \in L^{q}(\Omega) \text{ where } 1/q \coloneqq 1/p + 1/d.
\end{align*} \label{assump:rhs}
\end{enumerate} \vspace{-\baselineskip}
To guarantee later that the residual \textsl{a~posteriori} error estimator from~\eqref{eq:estimator:primal} is well-defined, we additionally require that $\f|_T \in H(\div,T)$ and $\f|_T \cdot \n \in L^2(\partial T)$ for all $T \in \TT_0$, where $\TT_0$ is the initial triangulation of the adaptive algorithm.

%%%%%%%%%%%%%%%%%%%%%%%%%%%%%%%%%%%%%%%%%%%%%%%%%%%%%%%%%%%%%%%%
\subsection{Well-posedness and applicability of abstract framework}\label{subsection:browder-minty}
%%%%%%%%%%%%%%%%%%%%%%%%%%%%%%%%%%%%%%%%%%%%%%%%%%%%%%%%%%%%%%%%
Let $v, w \in H^1_0(\Omega)$. We consider the operator $\AA$, where $H^{-1}(\Omega) \coloneqq H^1_0(\Omega){'}$ is used to denote the dual space of $H^1_0(\Omega)$,
\begin{align}\label{eq:op:bm}
 \AA \colon H^1_0(\Omega) \to H^{-1}(\Omega),
 \quad 
 \AA w \coloneqq \sprod{w}{\cdot \,} + \prod{b(w)}{\cdot \,}_{\Omega}.
\end{align}
Since $b'(x,\zeta) \ge 0$ according to~\eqref{assump:mon}, this implies that
\begin{align*}
	\big( b(x, \xi_2) - b(x, \xi_1) \big) (\xi_2-\xi_1) \ge 0
	\quad \text{for all } x \in \Omega \text{ and } \xi_1, \xi_2 \in \R.
\end{align*}
Together with~\eqref{assump:ell} and for $v, w \in H^1_0(\Omega)$, we thus see that
\begin{align}
	\begin{split}\label{eq:bm:stronglymonotone}
		\prod{ \AA w - \AA v}{w - v}
		&= \sprod{w - v}{ w - v} + \prod{b(w)-b(v)}{w-v}_{\Omega}	\ge \enorm{w-v}^2. %a(w - v, w - v) 
	\end{split}
\end{align}
This proves that $\AA$ is strongly monotone with $\alpha = 1$ with respect to the energy norm $\enorm{\,\cdot\,}$. The following lemma is crucial to prove local Lipschitz continuity.
\begin{lemma}\label{prop:energybound}
	Suppose~\eqref{assump:rhs},~\eqref{assump:ell},~\eqref{assump:car},~\eqref{assump:mon}, and~\eqref{assump:poly}. Let $\vartheta>0$ and let $v, w \in H^1_0(\Omega)$ with $\max \big\{\enorm{w}, \enorm{w-v}\big\} \le \vartheta < \infty$. Then, it holds that
	\begin{align}\label{eq:critical:lipschitz}
		\prod{b(w) - b(v)}{z}_{\Omega}
			\le \widetilde{L}[\vartheta] \, \enorm{w-v} \enorm{z}
			\quad \text{for all } z \in H^1_0(\Omega)
	\end{align}
	with $\widetilde{L}[\vartheta] = \widetilde{L}(|\Omega|, d, \vartheta, N, R, \mu_0)$. 
\end{lemma}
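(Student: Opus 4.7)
The plan is to rewrite the difference $b(w)-b(v)$ by the fundamental theorem of calculus (using the Carathéodory smoothness~\eqref{assump:car}) as
\begin{align*}
 b(w)(x) - b(v)(x) = \big(w(x)-v(x)\big) \int_0^1 b'\big(x, v(x) + t(w(x)-v(x))\big) \, \d t,
\end{align*}
so that
\begin{align*}
 \prod{b(w)-b(v)}{z}_\Omega
 = \int_0^1 \!\! \int_\Omega b'\big(v + t(w-v)\big)\,(w-v)\,z \,\d x \, \d t.
\end{align*}
Then I would apply the generalized Hölder inequality with three exponents $(p_1,p_2,p_3)$ satisfying $1/p_1+1/p_2+1/p_3=1$, choose $p_2=p_3$ dictated by the Sobolev embedding~\eqref{eq:intro:gns}, and bound the $L^{p_1}$-norm of $b'$ via the Taylor/growth estimate~\eqref{rem:estimate:derivatives} from Remark~\ref{remark:reg12}(ii) applied with $n=1$.

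In the most restrictive case $d=3$ and $N=5$, I would take $(p_1,p_2,p_3)=(3/2,6,6)$, for which the Sobolev embedding $H^1_0(\Omega)\hookrightarrow L^6(\Omega)$ yields $\|w-v\|_{L^6}\lesssim\enorm{w-v}$ and $\|z\|_{L^6}\lesssim\enorm{z}$. The requirement $(N-n)p_1 = 4\cdot 3/2 = 6 \le 2^\ast$ is then exactly at the borderline, so~\eqref{rem:estimate:derivatives} applies with $\sigma=3/2$ and gives
\begin{align*}
 \bigl\| b'\bigl(v+t(w-v)\bigr) \bigr\|_{L^{3/2}(\Omega)}
 \lesssim 1 + \enorm{v+t(w-v)}^{N-1}
 \lesssim 1 + \vartheta^{N-1},
\end{align*}
where I would use that $\enorm{v+t(w-v)}\le(1-t)\enorm{v}+t\enorm{w}\le\max\{\enorm{v},\enorm{w}\}\le 2\vartheta$ (since $\enorm{v}\le\enorm{w}+\enorm{w-v}\le 2\vartheta$). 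For $d\in\{1,2\}$, any $N\in\N$ is admissible by~\eqref{assump:poly}, and one can choose, e.g., $p_2=p_3$ large enough and correspondingly $p_1$ so that $(N-1)p_1$ is still admissible in the Sobolev embedding; the computation is strictly easier than the three-dimensional case.

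Combining the three factors under the integral in $t$ gives
\begin{align*}
 \bigl|\prod{b(w)-b(v)}{z}_\Omega\bigr|
 \;\lesssim\; \bigl(1+\vartheta^{N-1}\bigr)\,\enorm{w-v}\,\enorm{z},
\end{align*}
with implicit constants depending only on $|\Omega|$, $d$, $N$, $R$, and $\mu_0$ (the latter entering through the equivalence of $\|\nabla\cdot\|_{L^2}$ and $\enorm{\cdot}$). Setting $\widetilde{L}[\vartheta]\coloneqq C\,(1+\vartheta^{N-1})$ with a suitable constant $C$ yields~\eqref{eq:critical:lipschitz}.

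The main obstacle is the bookkeeping in the borderline case $d=3$, $N=5$: one must track that the exponent triple $(3/2,6,6)$ is admissible on all three sides simultaneously, i.e., that $b'$ (of polynomial degree $N-1=4$) lies in $L^{3/2}$ precisely because $4\cdot 3/2=6=2^\ast$, while $w-v$ and $z$ are controlled by the Sobolev embedding into $L^6$. It is this tight compatibility that forces the restriction $N\le 5$ in assumption~\eqref{assump:poly}.
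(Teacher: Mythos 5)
Your proposal is correct, and it takes a genuinely different (and arguably cleaner) decomposition than the paper. The paper's proof Taylor-expands $b(v)$ around $w$ to full order $N$, obtaining a sum of terms $b^{(n)}(w)(v-w)^n$ plus an integral remainder; it then pulls one factor of $w-v$ out of each term and, for the remaining product $b^{(n)}(w)(w-v)^{n-1}$, applies a further H\"older split with exponents $t_1=(N-1)t''/(N-n)$, $t_2=(N-1)t''/(n-1)$ and invokes the auxiliary bound~\eqref{rem:estimate:derivatives} from Remark~\ref{remark:reg12}(ii). You instead use only the first-order integral form $b(w)-b(v)=(w-v)\int_0^1 b'(v+t(w-v))\,\d t$, which already isolates the single factor $w-v$ and leaves only $\|b'(\cdot)\|_{L^{t''}}$ to estimate --- again by~\eqref{rem:estimate:derivatives}, now with $n=1$. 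This avoids the $n$-indexed bookkeeping entirely and lands on exactly the same borderline case $(p_1,p_2,p_3)=(3/2,6,6)$ with $(N-1)\cdot 3/2=6=2^\ast$ for $d=3$, $N=5$, which is precisely where the restriction $N\le 5$ comes from. Your bound $\enorm{v+t(w-v)}\le\max\{\enorm{v},\enorm{w}\}\le 2\vartheta$ by convexity of the norm is correct, and the remaining exponent choices for $d\in\{1,2\}$ work as you say. One small point worth making explicit: the Taylor expansion underlying~\eqref{rem:estimate:derivatives} requires $N-1-n\ge 0$, i.e.\ $N\ge 2$ when you take $n=1$; the degenerate case $N=1$ is trivial because then $b'=b^{(N)}$ is uniformly bounded by $R$ directly via~\eqref{assump:poly}, so the $L^{t''}$-bound is immediate. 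Both approaches yield the same dependence $\widetilde{L}[\vartheta]\simeq 1+\vartheta^{N-1}$.
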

\begin{proof} Due to the smoothness assumption~\eqref{assump:car}, we may consider the Taylor expansion
	\begin{align}
		\begin{split}\label{eq:primal:taylorexp}
			b(v) &= \sum_{n=0}^{N-1} b^{(n)}(w) \frac{( v - w )^{n}}{n!} + \frac{  ( v- w )^{N}}{(N-1)!} \int_0^1 ( 1 - \xi)^{N-1} \, b^{(N)}\big( w + (v - w) \, \xi \big)  \d{\xi}.
		\end{split}
	\end{align}
	In order to apply the generalized H\"{o}lder inequality for three terms $\phi, \varphi, \psi \in H^1_0(\Omega)$
		\begin{align*}
			\prod{\phi \,\varphi}{\psi}_{\Omega} \le \norm{\phi}{L^{t''}(\Omega)}\, \norm{\varphi}{L^{t}(\Omega)}\,\norm{\psi}{L^{t}(\Omega)},
		\end{align*}
		where $1 = 1/t + 1/t + 1/t''$, we choose $t > 2$ arbitrarily for $d \in \{1, 2\}$ and $t= 6$ and hence $t'' = 3/2$ for $d=3$. In both cases, we see that
	\begin{align*}
		\prod{b(w) - b(v)}{z}_{\Omega} &\le \sum_{n=1}^{N-1} \frac{1}{n!} \,\norm{b^{(n)}(w)( w-v )^{n-1}}{L^{t''}(\Omega)} \norm{w-v}{L^t(\Omega)}\norm{z}{L^{t}(\Omega)} \\
		& \hspace{-15mm} + \norm[\Big]{\frac{  (w-v )^{N-1}}{(N-1)!} \int_0^1 ( 1 - \xi )^{N-1} \, b^{(N)}\big( w + (v - w) \, \xi \big)  \d{\xi}}{L^{t''}(\Omega)} \norm{w-v}{L^t(\Omega)} \norm{z}{L^{t}(\Omega)}  \\
		&\hspace{-20mm} \stackrel{\eqref{assump:poly}}{\lesssim} \Big(\sum_{n=1}^{N-1} \norm{b^{(n)}(w)( w - v )^{n-1}}{L^{t''}(\Omega)} + \norm{w-v}{L^{(N-1)t''}(\Omega)}^{N-1} \Big) \enorm{w-v}\, \enorm{z},
	\end{align*}
where the hidden constant depends on $R$ from~\eqref{assump:poly}. Since $H^1_0(\Omega) \hook L^{(N-1)t''}(\Omega)$ for $d \in \{1, 2, 3\}$, it remains to prove that
	\begin{align}\label{eq:tbp}
		\norm{b^{(n)}(w)( w - v )^{n-1}}{L^{t''}(\Omega)}  \le C[\vartheta]  \quad \text{ for all } n=1, \ldots, N-1.
	\end{align}
	To this end, choose $t_1 = (N-1) t''/(N-n)$ and $t_2 = (N-1)t''/(n-1)$ and note that
	\begin{align*}
		\frac{1}{t''} = \frac{1}{t''} \Big( \frac{N-n}{N-1} + \frac{n-1}{N-1} \Big) = \frac{1}{t_1} + \frac{1}{t_2}.
	\end{align*}
	Using the H\"{o}lder inequality, we arrive at
	\begin{align*}
		\norm{b^{(n)}(w)( w - v )^{n-1}}{L^{t''}(\Omega)} \le \norm{b^{(n)}(w)}{L^{t_1}(\Omega)}\norm{( w - v )^{n-1}}{L^{t_2}(\Omega)}.
	\end{align*}
	Since $\norm{\varphi^j}{L^\sigma(\Omega)} =\norm{\varphi}{L^{j\sigma}(\Omega)}^j$ and $(N-1)t'' < \infty$ if $d \in \{1, 2\}$ and $(N-1)t'' \le 6$ if $d=3$ guarantee admissibility as in Remark~\ref{remark:reg12}{\rm (ii)}, we apply the Sobolev embedding to obtain that
	\begin{align*}
		\norm{b^{(n)}(w)}{L^{t_1}(\Omega)} \eqreff*{rem:estimate:derivatives}{\lesssim} 1 + \norm{w}{L^{(N-n)t_1}(\Omega)}^{N-n} = 1 + \norm{w}{L^{(N-1)t''}(\Omega)}^{N-n} \lesssim 1 + \enorm{w}^{N-n}
	\end{align*}
	and
	\begin{align*}
		\norm{(w-v)^{n-1}}{L^{t_2}(\Omega)} =  \norm{w-v}{L^{(n-1)t_2}(\Omega)}^{n-1} =\norm{w-v}{L^{(N-1)t''}(\Omega)}^{n-1} \lesssim \enorm{w-v}^{n-1}. 
	\end{align*}
	The last estimates together with the assumptions $\enorm{w-v} \le \vartheta$ and $\enorm{w}\le \vartheta$ conclude the proof with hidden constant $\widetilde{L}[\vartheta] = \widetilde{L}(|\Omega|, d, \vartheta, N, R, \mu_0)> 0$.
\end{proof}

To see the local Lipschitz continuity of $\AA$, let $v, w, \psi \in H^1_0(\Omega)$ and observe that
\begin{align*}
	\prod{ \AA w - \AA v}{\psi} & =  \sprod{w - v}{\psi} + \prod{b(w)- b(v)}{\psi}_{\Omega}  \stackrel{\mathclap{\eqref{eq:critical:lipschitz}}}{\le}  (1 + \widetilde{L}[\vartheta]) \,\enorm{w- v}\, \enorm{\psi},
	\end{align*}
provided that $\enorm{w} \le \vartheta$ and $\enorm{w-v} \le \vartheta$. This shows that $\AA$ is locally Lipschitz continuous with Lipschitz constant $L[\vartheta] \coloneqq 1 + \widetilde{L}[\vartheta]$. Hence, $\AA$ fits into the abstract setting of Section~\ref{section:monlip}. 

Furthermore, following~\cite{ahw2022}, we note that the energy for the semilinear model problem~\eqref{eq:strongform:primal} of Section~\ref{section:modelproblem} for $v \in H^1_0(\Omega)$ is given by
		\begin{equation}\label{eq:semilinear:energy}
			\EE(v) = \frac{1}{2} \int_\Omega |\A^{1/2}\nabla v|^2 \, \d{x} + \int_\Omega \int_0^{v(x)} b(s) \, \d{s} \d{x} - \int_\Omega  f  v\, \d{x}-  \int_\Omega  \f \cdot \nabla v\, \d{x}. 
		\end{equation}
	To see that the second integral is well-defined, note that the integration of the Taylor expansion~\eqref{eq:nonlinearity:taylor} gives rise to a term $s^{N+1}$ evaluated at $s = v(x)$ and $s= 0$. Its integrability $\norm{v^{N+1}}{L^1(\Omega)}=\norm{v}{L^{(N+1)}(\Omega)}^{N+1} < \infty$ is ensured by~\eqref{assump:compact}.

%%%%%%%%%%%%%%%%%%%%%%%%%%%%%%%%%%%%%%%%%%%%%%%%%%%%%%%%%%%%%%%%

%%%%%%%%%%%%%%%%%%%%%%%%%%%%%%%%%%%%%%
%
%%%%%%%%%%%%%%%%%%%%%%%%%%%%%%%%%%%%%%
\subsection{Residual error estimators}

For $\TT_\coarse \in \T$ and $v_\coarse \in \XX_\coarse$, the local contributions of the standard residual error estimator for the semilinear model problem~\eqref{eq:weakform:primal} read
\begin{align}
\begin{split}\label{eq:estimator:primal}
\eta_\coarse(T, v_\coarse)^2 &\coloneqq h_T^2 \,\norm{f + \div(\A \, \nabla v_\coarse - \f) - b(v_\coarse)}{L^2(T)}^2  \\
& \quad + h_T \, \norm{\jump{(\A \, \nabla v_\coarse - \f ) \, \cdot \, \n}}{L^2(\partial T \cap \Omega)}^2,
\end{split}
\end{align}
where $\jump{\,\cdot\,}$ denotes the jump across edges (for $d=2$) resp.\ faces (for $d=3$) and $\n$ denotes the outer unit normal vector. For $d=1$, these jumps vanish, i.e., $\jump{\,\cdot\,} = 0$.~\cite[Proposition~15]{bbimp2022} proves the axioms of adaptivity~\eqref{assumption:stab}--\eqref{assumption:drel} for the present setting. 
\begin{proposition}[\phantom{}{\cite[Proposition~15]{bbimp2022}}]\label{proposition:axioms} Suppose~\eqref{assump:rhs},~\eqref{assump:ell},~\eqref{assump:car},\allowbreak~\eqref{assump:mon}, and~\eqref{assump:compact}. Then, the residual error estimator from~\eqref{eq:estimator:primal} satiesfies~\eqref{assumption:stab}--\eqref{assumption:drel} from Section~\ref{subsection:axioms}. The constant $\Crel$ depends only on $d$,  $\mu_0$, and uniform shape regularity of the meshes $\TT_\coarse \in \T$. The constant $\Cdrel$ depends, in addition, on the polynomial degree $m$, and $\Cstab[\vartheta]$ depends furthermore on $|\Omega|$, $\vartheta$, $n$, $R$, and $\A$. \hfill \qed
\end{proposition}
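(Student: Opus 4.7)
The plan is to verify the four axioms in turn, with the nonlinearity handled via the Taylor/Sobolev machinery already developed for Lemma~\ref{prop:energybound}. Since the diffusion part of the residual is standard, I would focus the discussion on how $b$ enters and how $\vartheta$ shows up in $\Cstab[\vartheta]$.

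For stability~\eqref{assumption:stab}, I would apply the reverse triangle inequality on the $\ell^2$-sum of the local contributions to obtain
\begin{align*}
	|\eta_\fine(\UU_\coarse, v_\fine) - \eta_\coarse(\UU_\coarse, v_\coarse)|^2
	\lesssim \sum_{T \in \UU_\coarse}  h_T^2 \, \norm{\div(\A\nabla(v_\fine-v_\coarse))}{L^2(T)}^2 + h_T^2\,\norm{b(v_\fine)-b(v_\coarse)}{L^2(T)}^2 \\
	+ \sum_{T \in \UU_\coarse} h_T\, \norm{\jump{\A\nabla (v_\fine-v_\coarse)\cdot\n}}{L^2(\partial T\cap\Omega)}^2,
\end{align*}
using that $\UU_\coarse \subseteq \TT_\fine\cap\TT_\coarse$ makes jumps on $\partial T$ match, $h_T$ coincide, and the data terms $f, \f$ cancel. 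The divergence and jump terms are bounded by $\enorm{v_\fine-v_\coarse}$ through elementwise inverse estimates (since $v_\fine-v_\coarse$ is piecewise polynomial on $\TT_\fine$) and the local $W^{1,\infty}$-regularity of $\A$. The only delicate contribution is the reaction term, which I would treat as in Lemma~\ref{prop:energybound}: a Taylor expansion of $b$ around $v_\coarse$ followed by a generalized H\"older inequality with exponents chosen so that $H^1\hookrightarrow L^r$ remains valid for $d\le 3$ (this is where condition~\eqref{assump:compact} with $N\in\{2,3\}$ if $d=3$ is needed). The resulting bound is $\norm{b(v_\fine)-b(v_\coarse)}{L^2(T)} \lesssim  \widetilde C[\vartheta]\, \enorm{v_\fine-v_\coarse}$, with $\widetilde C[\vartheta]$ depending on $\vartheta$ via $\enorm{v_\fine}$. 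Multiplication by $h_T\le\diam(\Omega)$ and summation give the claimed local Lipschitz estimate with $\Cstab[\vartheta]$ of the stated dependence.

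For reduction~\eqref{assumption:red}, the argument is purely geometric and does not see the nonlinearity: NVB (or 1D bisection) gives $|T'|\le|T|/2$ and hence $h_{T'}\le 2^{-1/d}h_T$ for any $T'\in\TT_\fine$ refined from $T\in\TT_\coarse\setminus\TT_\fine$; interior faces of such $T'$ either existed in $\TT_\coarse$ (and are not counted) or lie inside $T$ so that the jump vanishes because $v_\coarse|_T$ is a single polynomial. The elementary computation then yields $\qred^2 = 2^{-1/d}$. Reliability~\eqref{assumption:rel} follows from strong monotonicity ($\alpha=1$) combined with the standard residual argument: for $w\coloneqq u^\exact-u^\exact_\coarse$ and any Scott--Zhang quasi-interpolant $J_\coarse w\in\XX_\coarse$, use the Galerkin orthogonality $\prod{\AA u^\exact-\AA u^\exact_\coarse}{J_\coarse w}=0$, write the residual elementwise via integration by parts, and apply the approximation estimates $\norm{w-J_\coarse w}{L^2(T)}\lesssim h_T\enorm{w}_{\omega_T}$ and $\norm{w-J_\coarse w}{L^2(\partial T)}\lesssim h_T^{1/2}\enorm{w}_{\omega_T}$ with finite overlap of patches. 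Discrete reliability~\eqref{assumption:drel} is obtained by the same argument applied to $w\coloneqq u_\fine^\exact-u_\coarse^\exact\in\XX_\fine$: Galerkin orthogonality removes the contributions from elements in $\TT_\fine\cap\TT_\coarse$ because one may use a discrete quasi-interpolant (Scott--Zhang on $\TT_\fine$) that coincides with $w$ on those elements, leaving only the sum over $\TT_\coarse\setminus\TT_\fine$; here the dependence on the polynomial degree $m$ enters through the discrete quasi-interpolation constants.

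The principal obstacle is the stability argument for the reaction term: one must control $\norm{b(v_\fine)-b(v_\coarse)}{L^2(T)}$ in $L^2$ (not merely as a functional pairing, as in the proof of local Lipschitz continuity for $\AA$), and doing so forces the tighter growth condition~\eqref{assump:compact} in dimension three because the available H\"older exponent in an $L^2$-bound is strictly smaller than the one available when pairing against an $H^1$-test function. Once this $L^2$-bound is in place with constant depending polynomially on $\vartheta$ (through $\enorm{v_\fine}^{N-1}$), the other three axioms are routine adaptations of the linear theory, and the constants inherit the stated dependencies.
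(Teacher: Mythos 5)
The paper does not prove Proposition~\ref{proposition:axioms} internally; it delegates the proof entirely to \cite[Proposition~15]{bbimp2022}, so there is no in-text argument to compare against. Your blind sketch is, however, a sound reconstruction of the standard argument the cited reference carries out. You correctly single out the one genuinely nonroutine step: stability~\eqref{assumption:stab} requires an $L^2(\Omega)$ bound on $b(v_\fine)-b(v_\coarse)$, not merely the dual pairing bound of Lemma~\ref{prop:energybound}, and this is exactly why the H\"older exponent budget tightens and forces the stronger growth condition~\eqref{assump:compact} when $d=3$. The handling of reduction, reliability and discrete reliability via NVB size reduction, vanishing new jumps for piecewise polynomials, Galerkin orthogonality, and (discrete) Scott--Zhang quasi-interpolation matches the well-known template, with $\alpha=1$ giving reliability the stated $\mu_0$-dependence and the quasi-interpolation constants feeding $m$ into $\Cdrel$. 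One small inconsistency: you Taylor-expand $b$ around $v_\coarse$ but then attribute the $\vartheta$-dependence to $\enorm{v_\fine}$; since the stability hypothesis bounds $\enorm{v_\fine}$ and $\enorm{v_\fine-v_\coarse}$ by $\vartheta$ (and hence $\enorm{v_\coarse}\le 2\vartheta$), both expansion centers work, but expanding around $v_\fine$ is the cleaner choice consistent with the form of~\eqref{eq:locally-lipschitz} and Lemma~\ref{prop:energybound}, where the role of the ``base point'' is played by the variable directly bounded by $\vartheta$.
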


%\clearpage
% !TEX root = ailfem_semilinear.tex

%%%%%%%%%%%%%%%%%%%%%%%%%%%%%%%%%%%%%%%%%%%%%%%%%%%%%%%%%%%%%%%%
%%%%%%%%%%%%%%%%%%%%%%%%%%%%%%%%%%%%%%%%%%%%%%%%%%%%%%%%%%%%%%%%
\section{Practical algorithm}
%%%%%%%%%%%%%%%%%%%%%%%%%%%%%%%%%%%%%%%%%%%%%%%%%%%%%%%%%%%%%%%%%%%%%%%%%%%%%%%%%%%%%%%%%%%%%%%%%%
\label{section:practical_algorithm}
%v
For the semilinear problem~\eqref{eq:strongform:primal} of Section~\ref{section:modelproblem}, it holds that $\alpha = 1$ according to~\eqref{eq:bm:stronglymonotone}. The optimal damping parameter $\delta>0$ as well as $L[6M]$ are unknown in practice. In this section, we present a practical algorithm which is formulated with computable quantities only.

%%%%%%%%%%%%%%%%%%%%%%%%%%%%%%%%%%%%%%%%%%%%%%%%%%%%%%%%%%%%%%%%
\subsection{AILFEM and contraction of damped Zarantonello iteration}
%%%%%%%%%%%%%%%%%%%%%%%%%%%%%%%%%%%%%%%%%%%%%%%%%%%%%%%%%%%%%%%%

Instead of adaptively choosing $\delta>0$, we adapt the local Lipschitz constant $L$. Since $\alpha = 1$, this already determines the optimal choice $\delta=1/L$ and $q[\delta]^2= 1- \delta^2$; see Remark~\ref{remark:delta}. 

\begin{algorithm}[practical AILFEM]\label{algorithm:practical} 
	\textbf{Input:} initial triangulation $\TT_0$, initial guess $u_0^0 \coloneqq 0$ and $M = \norm{F - \AA 0}{\XX^\prime} < \infty$ according to~\eqref{eq:exact:bounded}, marking parameters $0 < \theta \le 1$ and $\Cmark \ge 1$, solver termination parameter $\lambda >0$, and solver parameters $L_0 \coloneqq 1$ and $\beta \coloneqq \sqrt{2}$.
	
 \noindent \textbf{Loop:} For $\ell = 0, 1, 2, \dots$, repeat the following steps {\rm(i)--(v)}:
	\begin{itemize}
		\item[\rm(i)] Calculate $\delta_\ell \mapsfrom 1/L_\ell$ and $q_\ell^2 \mapsfrom 1 - \delta_\ell^2$. 
		\item[\rm(ii)] For all $k = 1, 2, \dots$, repeat the following steps {\rm(a)--(c)}:
		\begin{itemize}
			\item[\rm(a)] Compute $u_\ell^k \coloneqq \Phi_{\ell}(\delta_\ell; u_\ell^{k-1})$ and $\eta_\ell(T, u_\ell^k)$ for all $T \in \TT_\ell$.
			\item[\rm(b)] Terminate $k$-loop if \quad $\big(|\EE(u^{k-1}_\ell) - \EE(u^{k}_\ell)| \le  \lambda^2\, \eta_\ell(u^k_\ell)^{2} \quad \land \quad \enorm{u^k_\ell} \le 2M$\big).
			\item[\rm(c)] 
 	If $\big( \EE(u^{k}_\ell) > q_\ell^2\, \EE(u^{k-1}_\ell) \big)$, then 
		 	\begin{itemize}
		 		\item[\rm{(c1)}] Discard the computed $u_\ell^k$ and set $k \mapsfrom k-1$.
		 		\item[\rm{(c2)}] Increase $L_\ell \mapsfrom \beta \, L_\ell$.
		 		\item[\rm{(c3)}] Update $\delta_\ell \mapsfrom 1/L_\ell$ and $q_\ell^2 \mapsfrom 1 - \delta_\ell^2$. 
		 	\end{itemize}
		\end{itemize}
		\item[\rm(iii)] Upon termination of the $k$-loop, define $\kk(\ell) \coloneqq k$.
		\item[\rm(iv)] Determine $\MM_\ell \subseteq \TT_\ell$ with $\theta \,\eta_\ell(u_\ell^{\kmaxl})^2 \le \sum_{T \in \MM_\ell} \eta_\ell(T, u_\ell^{\kmaxl})^2$.
		\item[\rm(v)] Generate $\TT_{\ell+1} \coloneqq \refine(\TT_\ell, \MM_\ell)$ and define $u_{\ell+1}^0 \coloneqq u_\ell^{\kmaxl}$. 
	\end{itemize}
\end{algorithm}

\begin{remark}\label{remark:energy:necessary}
	The motivation of the criterion in Algorithm~\ref{algorithm:practical}{\rm (ii.c)} is based on the equivalence
		\begin{align}\label{eq:criterion:equivalence}
		\EE(u^{k}_\ell)\! -\! \EE(u^\exact_\ell) \le q_\ell^2 \big[	\EE(u^{k-1}_\ell)\!  -\! \EE(u^\exact_\ell) \big] \quad  \Longleftrightarrow \quad  
		\EE(u^{k}_\ell) \!- \! q_\ell^2  \EE(u^{k-1}_\ell) \le (1\! -\! q_\ell^2)\, \EE(u^\exact_\ell).
	\end{align}
The energy minimization property from Lemma~\ref{lemma:equivalence} and $b(0)= 0$ from~\eqref{assump:mon} show that $\EE(u^\exact_\ell) \le \EE(0) = 0$; cf.~\eqref{eq:semilinear:energy}.
 As a necessary criterion for energy contraction~\eqref{crucial:energycontractivity}, we thus obtain $\EE(u^{k}_\ell) \le q_\ell^2 \EE(u^{k-1}_{\ell})$, which is enforced by Algorithm~\ref{algorithm:practical}{\rm (ii.c)}.
 \end{remark}
\begin{remark}
	Note that $\lambda >0$ is arbitrary but fixed and remains unchanged throughout the algorithm. In the numerical experiments below, the particular choice $\lambda = 0.1$ is motivated by the following heuristic argument: the estimator $\eta_\ell(u^\exact_\ell)$ and hence approximately $\eta_\ell(u^\kmax_\ell)$ controls the discretization error, while $\enorm{u^\exact_\ell - u^\kmax_\ell}^2 \eqreff{eq:equivalence}{\simeq} \EE(u^\kmax_\ell) - \EE(u^\exact_\ell) \eqreff{eq:energycontr:triangle}\lesssim \EE(u^{\kmax-1}_\ell) - \EE(u^{\kmax}_\ell) \eqreff{eq:f4}{\simeq} \enorm{u^\kmax_\ell - u^{\kmax-1}_\ell}^2$ controls the linearization error --- at least if $\delta_\coarse$ is sufficiently small. Hence, $\EE(u^{\kmax-1}_\ell) - \EE(u^{\kmax}_\ell) \le 0.1^{2} \, \eta_\ell(u^\kmax_\ell)^{2}$ heuristically aims at limiting the linearization error to be at most $10\%$ of the current discretization error.
\end{remark}
The next result states that Algorithm~\ref{algorithm:practical}{\rm{(ii.c)}} will not lead to an infinite loop.
\begin{proposition}\label{prop:damped-zarantonello}
	Suppose that $\AA$ satisfies~\eqref{eq:strongly-monotone},~\eqref{eq:locally-lipschitz}, and~\eqref{eq:potential}. Let $u_\coarse^0 \in \XX_\coarse$ with $\enorm{u_\coarse^0} \le 2M$. Set $L_{0}, L_{\coarse} \mapsfrom 1$ and define $\beta \coloneqq \sqrt{2}$. Compute $\delta_\coarse = 1/L_{\coarse}$ and $q_\coarse^2 = 1 - \delta_\coarse^2$. Starting with $k \mapsfrom 1$ and $u_\coarse^1 \coloneqq \Phi_\coarse(\delta_\coarse; u_\coarse^0) \in \XX_\coarse$, we proceed as follows:
	\begin{itemize}
		\item Given $u_\coarse^k \in \XX_\coarse$ for $k \ge 1$, compute $u_\coarse^{k+1} \coloneqq  \Phi_\coarse(\delta_\coarse; u_\coarse^k) \in \XX_\coarse$
		and check if
		\begin{align}\label{eq:prop:damped-zarantonello}
			 \EE(u^{k+1}_{\coarse}) \le q_{\coarse}^2\, \EE(u^{k}_{\coarse}).
		\end{align}
		\item If~\eqref{eq:prop:damped-zarantonello} holds, then increase $k \mapsfrom k + 1$.
		\item If~\eqref{eq:prop:damped-zarantonello} fails, then 
increase $L_{\coarse}  \mapsfrom \beta \, L_{\coarse} $ and update $\delta_{\coarse} \mapsfrom 1/L_{\coarse} $ and $q_\coarse^2 \mapsfrom 1 - \delta_\coarse^2$. Discard the computed $u_\coarse^{k+1}$.
	\end{itemize}
	Then, the condition~\eqref{eq:prop:damped-zarantonello} fails only finitely often so that this simple algorithm defines the sequence of iterates $(u_\coarse^k)_{k \in \N_0}$. 
\end{proposition}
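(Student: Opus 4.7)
The plan is to show that there exists a threshold $\delta^* > 0$ such that whenever $\delta_H \le \delta^*$, the check~\eqref{eq:prop:damped-zarantonello} must succeed. Since each failure multiplies $L_H$ by $\beta = \sqrt{2}$, the current damping parameter $\delta_H = 1/L_H$ decreases geometrically, so after at most $\lceil \log_\beta(1/\delta^*) \rceil$ failures one has $\delta_H \le \delta^*$ and no further increase of $L_H$ can occur. From that point on, the Zarantonello mapping is applied with a fixed $\delta_H$ and the criterion holds for every subsequent $k$, which is precisely the claim.

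The key reduction is the equivalence highlighted in Remark~\ref{remark:energy:necessary}: writing the criterion as
\begin{align*}
\EE(u_\coarse^{k+1}) - q_\coarse^2 \, \EE(u_\coarse^k) \le (1 - q_\coarse^2) \, \EE(u_\coarse^\exact),
\end{align*}
I see that energy contraction $\EE(u_\coarse^{k+1}) - \EE(u_\coarse^\exact) \le q_\coarse^2 \, [\EE(u_\coarse^k) - \EE(u_\coarse^\exact)]$ \emph{implies} the criterion as soon as $\EE(u_\coarse^\exact) \le 0$. In the semilinear setting for which Algorithm~\ref{algorithm:practical} is designed, this is immediate from $b(\cdot,0)=0$ via~\eqref{assump:mon} together with~\eqref{eq:semilinear:energy}, giving $\EE(0) = 0$; Lemma~\ref{lemma:equivalence} (the energy minimization property) then yields $\EE(u_\coarse^\exact) \le \EE(0) = 0$. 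Consequently, it suffices to guarantee energy contraction \emph{with contraction constant bounded by} $q_\coarse^2 = 1 - \delta_\coarse^2$.

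The appropriate threshold is identified from Proposition~\ref{proposition:econtraction}. First, that proposition is applicable provided $\delta_\coarse < 2\alpha/L[6M]^2$. Second, comparing its contraction constant
\begin{align*}
\qE[\delta_\coarse]^2 = 1 - \Bigl(1 - \tfrac{\delta_\coarse L[6M]}{2}\Bigr) \, \tfrac{2\delta_\coarse \alpha^2}{L[3M]}
\end{align*}
to the algorithmic value $q_\coarse^2 = 1 - \delta_\coarse^2$, the inequality $\qE[\delta_\coarse]^2 \le q_\coarse^2$ reduces after division by $\delta_\coarse$ to the elementary linear condition
\begin{align*}
\delta_\coarse \le \Bigl(1 - \tfrac{\delta_\coarse L[6M]}{2}\Bigr) \, \tfrac{2\alpha^2}{L[3M]},
\end{align*}
which certainly holds whenever $\delta_\coarse \le \min\{1/L[6M],\, \alpha^2/L[3M]\}$. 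Taking $\delta^*$ to be the minimum of these three constants yields the desired threshold, and forwards the criterion to every subsequent step via the reduction of the previous paragraph.

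The main obstacle I expect is the bookkeeping of boundedness, since the constants $L[3M]$ and $L[6M]$ hinge on \textsl{a~priori} control $\enorm{u_\coarse^k} \le 2M$ of the current iterate when Proposition~\ref{proposition:econtraction} is invoked. Here I plan to use that the algorithm only ever \emph{accepts} an iterate $u_\coarse^{k+1}$ for which the criterion~\eqref{eq:prop:damped-zarantonello} held, so that the accepted energy sequence is quasi-monotone; combined with $\EE(u_\coarse^\exact) \le 0$, with~\eqref{eq:equivalence} from Lemma~\ref{lemma:equivalence}, and with the starting assumption $\enorm{u_\coarse^0} \le 2M$, one obtains the same inductive chain as in the proofs of Corollary~\ref{cor1:zarantonello} and Corollary~\ref{cor2:zarantonello}, propagating $\enorm{u_\coarse^k} \le 4M \le 6M$ along the accepted iterates and thereby legitimizing the use of $L[3M]$ and $L[6M]$ throughout. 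Since a discarded trial iterate is never reused, the boundedness is needed only on the accepted history, and this closes the argument.
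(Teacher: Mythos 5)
Your proof takes essentially the same route as the paper's: you invoke Proposition~\ref{proposition:econtraction} to obtain energy contraction with constant $\qE[\delta_\coarse]^2$, use Remark~\ref{remark:energy:necessary} together with $\EE(u^\exact_\coarse)\le \EE(0)=0$ to pass from $\qE[\delta_\coarse]^2\le q_\coarse^2$ to the criterion~\eqref{eq:prop:damped-zarantonello}, and observe that the geometric decrements of $\delta_\coarse$ reach such a threshold after finitely many failures. Your choice to extract the explicit threshold $\delta^*=\min\{1/L[6M],\,\alpha^2/L[3M],\,2\alpha/L[6M]^2\}$ (where the paper argues asymptotically that $1-\qE[\delta]^2\sim\delta$ eventually dominates $1-q_\coarse^2=\delta^2$) is a sound and clean variant.

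The one place where your argument genuinely breaks down is the boundedness repair. You correctly identify that Proposition~\ref{proposition:econtraction} hinges on the starting iterate of the \enquote{theoretical} restart sequence being controlled (the paper indeed leaves this implicit), but the fix you propose does not close the gap. You claim that, because the accepted iterates satisfy~\eqref{eq:prop:damped-zarantonello}, the inductive chain of Corollaries~\ref{cor1:zarantonello}--\ref{cor2:zarantonello} propagates $\enorm{u^k_\coarse}\le 4M\le 6M$. Those corollaries, however, rest entirely on the Zarantonello map being a norm contraction, i.e., on $\delta_\coarse<2\alpha/L[\vartheta]^2$, which is exactly what may still fail in the preasymptotic regime where $L_\coarse$ has not yet been bumped high enough; in that regime $\Phi_\coarse(\delta_\coarse;\cdot)$ need not contract, so the induction does not propagate, and~\eqref{eq:prop:damped-zarantonello} by itself is not strong enough to force $\enorm{u^k_\coarse}\le 4M$. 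What the criterion \emph{does} give you (together with $\EE(u^\exact_\coarse)\le 0$ and the lower bound in~\eqref{eq:equivalence}, which only uses~\eqref{eq:strongly-monotone}) is that the accepted energies lie between $\EE(u^\exact_\coarse)$ and $\max\{\EE(u^0_\coarse),0\}$, and hence $\enorm{u^k_\coarse-u^\exact_\coarse}^2\le \tfrac{2}{\alpha}\bigl[\max\{\EE(u^0_\coarse),0\}-\EE(u^\exact_\coarse)\bigr]$, which yields a uniform bound of the type $\enorm{u^k_\coarse}\le M+3M\sqrt{L[3M]/\alpha}=\tau/2$ as in Lemma~\ref{cor:zarantonello:termination} — but this constant may well exceed $4M$. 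A correct version of your argument must therefore feed this larger radius back into the Lipschitz constants appearing in the threshold (use $L[\tau]$-type constants in place of $L[6M]$), which still gives a finite $\delta^*$ and hence finitely many failures; the specific numbers $4M$ and $6M$ you cite are not justified at that stage.
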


\begin{proof}
{\bf Step 1.} Given the initial $L_0 = 1$, there exists a minimal number $j \in \N_0$ such that
	\begin{align*}
		\frac{L[6M]^2}{2\alpha} < \beta^j L_0 = L_{\coarse}(j)  \quad \text{ and thus } \quad \delta_\coarse \coloneqq\delta_\coarse(j) = \frac{1}{\beta^j L_0} < \frac{2 \alpha}{L[6M]^2}. 
	\end{align*}
Define $q_\coarse[\delta_\coarse(k)]^2 \coloneqq 1 - \delta_\coarse(k)^2$. Recall $\qE[\delta_\coarse]$ from~\eqref{eq:energycontr:const} and observe that 
	\begin{align*}
		\qE[\delta_\coarse(k)]^2 = 1 - \big(1 -  \frac{\delta_\coarse(k) L[6M]}{2} \big) \, \frac{2\delta_\coarse(k) \alpha^2}{L[3M]} \simeq 1 - \delta_\coarse(k) + \delta_\coarse(k)^2  \quad \text{ for } \delta_\coarse(k) \to 0.
		\end{align*} 
	Since $\delta_\coarse(k) \to 0$ for $k \to \infty$, there exists a minimal number $k_0 \in \N$ with $k_0 \ge j$ such that
	\begin{align*}
\qE[\delta_\coarse(k_0)]^2 < q_\coarse^2[\delta_\coarse(k_0)]  = 1 - \frac{1}{\beta^{2k_0} L_0^2}  <1  \quad \text{ as well as } \quad \delta_\coarse(k_0) = \frac{1}{\beta^{k_0} L_0} < \frac{2 \alpha}{L[6M]^2}.
	\end{align*}
 
This implies that Proposition~\ref{proposition:econtraction} holds for the theoretical sequence $\widetilde u_\coarse^{0} \coloneqq u_\coarse^{k_0}$ and $\widetilde u_\coarse^{k+1} \coloneqq \Phi_\coarse(\delta_{\coarse}; \widetilde u_\coarse^k)$. In particular, we conclude that energy contraction~\eqref{crucial:energycontractivity} holds with $q^{2}_{\coarse} =1 - \delta^2_\coarse$. Moreover, Remark~\ref{remark:energy:necessary} shows that the necessary criterion~\eqref{eq:prop:damped-zarantonello} is guaranteed to hold for the iterates $(\widetilde{u}_\coarse^k)_{k\in \N_0}$ as soon as~\eqref{crucial:energycontractivity} holds.

{\bf Step 2.}
	Since the failure of~\eqref{eq:prop:damped-zarantonello} increases the current value of $L$ to $\beta L$, it follows from Step~1 that~\eqref{eq:prop:damped-zarantonello} can fail only finitely often, until the recomputed sequence $(u_\coarse^k)_{k \in \N_0}$ satisfies~\eqref{eq:prop:damped-zarantonello} for all $k \in \N_0$ with $k \ge k_0$. 
\end{proof}

\begin{remark}\label{rem:comparison}
The optimality results for Algorithm~\ref{algorithm:idealized} are expected to carry over --- at least asymptotically --- to Algorithm~\ref{algorithm:practical}; see Proposition~\ref{prop:damped-zarantonello}. The major difficulty lies in algorithmically determining whether the correct estimate of the Lipschitz constant (and thus $\delta_\coarse$) is preasymptotic or not, i.e., determining $k$ in Step~2 from the last proposition by means of computable quantities only. However, it is ensured that $\delta_\coarse$ remains uniformly bounded from below.
	\end{remark}

%\clearpage
%!TEX root = ailfem_semilinear.tex

\section{Numerical experiments}\label{section:numerical}
%%%%%%%%%%%%%%%%%%%%%%%%%%%%%%%%%%%%%%%%%%%%%%%%%%%%%%%%%%%%%%%%
%%%%%%%%%%%%%%%%%%%%%%%%%%%%%%%%%%%%%%%%%%%%%%%%%%%%%%%%%%%%%%%%
\renewcommand{\thesubfigure}{\Alph{subfigure}}

In this section, we test and illustrate Algorithm~\ref{algorithm:practical} with numerical experiments. All experiments were implemented using the Matlab code \emph{MooAFEM}~\cite{ip2022}. Throughout, $\Omega \subset \R^2$ and we use $x = (x_1, x_2) \in \Omega$ to denote the Cartesian coordinates. In all experiments, we consider equation~\eqref{eq:strongform:primal} with isotropic diffusion $\A =\big( \begin{smallmatrix}
		\varepsilon & 0 \\
		0 & \varepsilon 
\end{smallmatrix} \big)$ with $0 < \varepsilon \le 1$. The adaptivity parameter is set to $\theta = 0.5$ and $\Cmark =1$. Moreover, recall the definition of the overall computational cost from~\eqref{eq:work}, which reads
	\begin{align*}
		\mathtt{work}(\ell,k) = \sum_{\substack{ (\ell',k') \in \QQ \\ (\ell',k') \le (\ell,k)} } \# \TT_{\ell'} = k \, \# \TT_\ell + \sum_{\ell'=0}^{\ell-1} \kmax(\ell') \, \# \TT_{\ell'}.
	\end{align*} 
%%%%%%%%%%%%%%%%%%%%%%%%%%%%%%%%%%%%%%%%%%%%%%%%%%%%%%%%%%%%%%%%%%%%%%%%%%%%%%%%%%%
\begin{experiment}[nonlinear variant of the sine-Gordon equation~{\cite[Experiment~5.1]{ahw2022}}]\label{example:gordon1}
		\begin{figure}
		\centering
			{\includegraphics[width=0.49\textwidth]{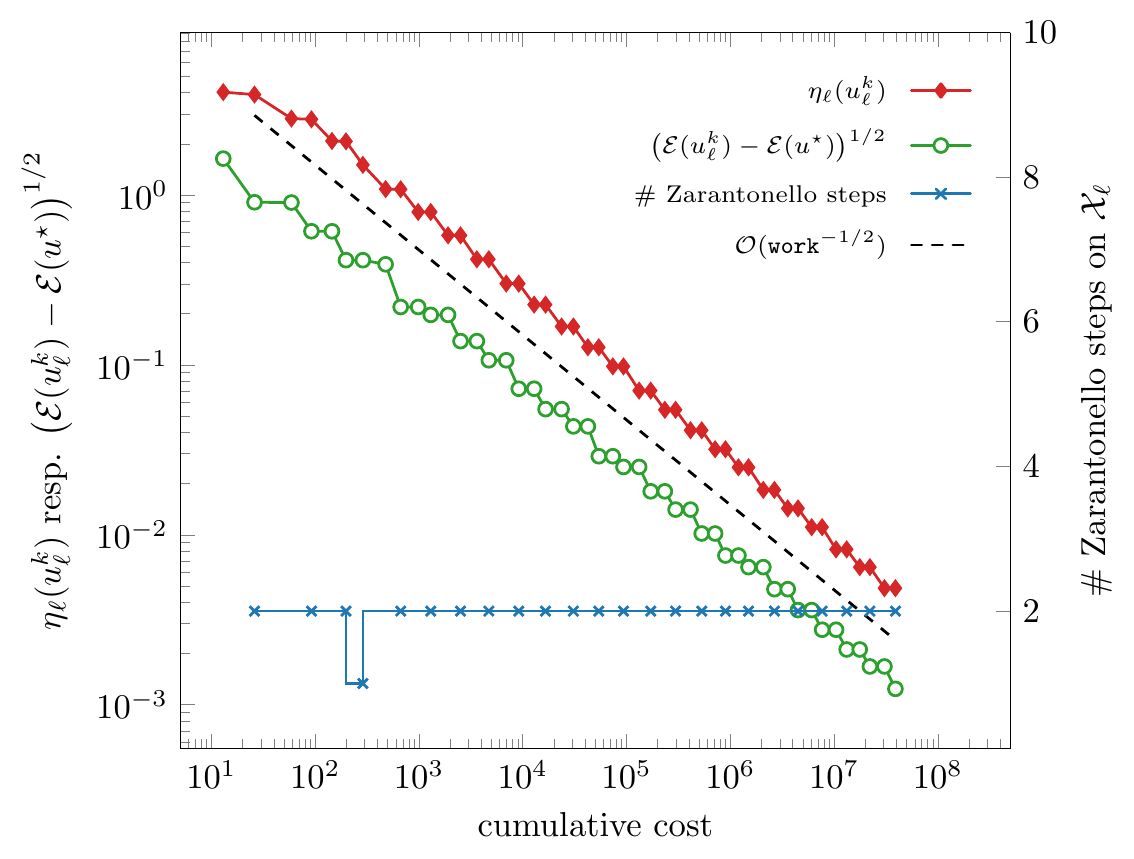}}
		\quad
			{\includegraphics[width=0.45\textwidth]{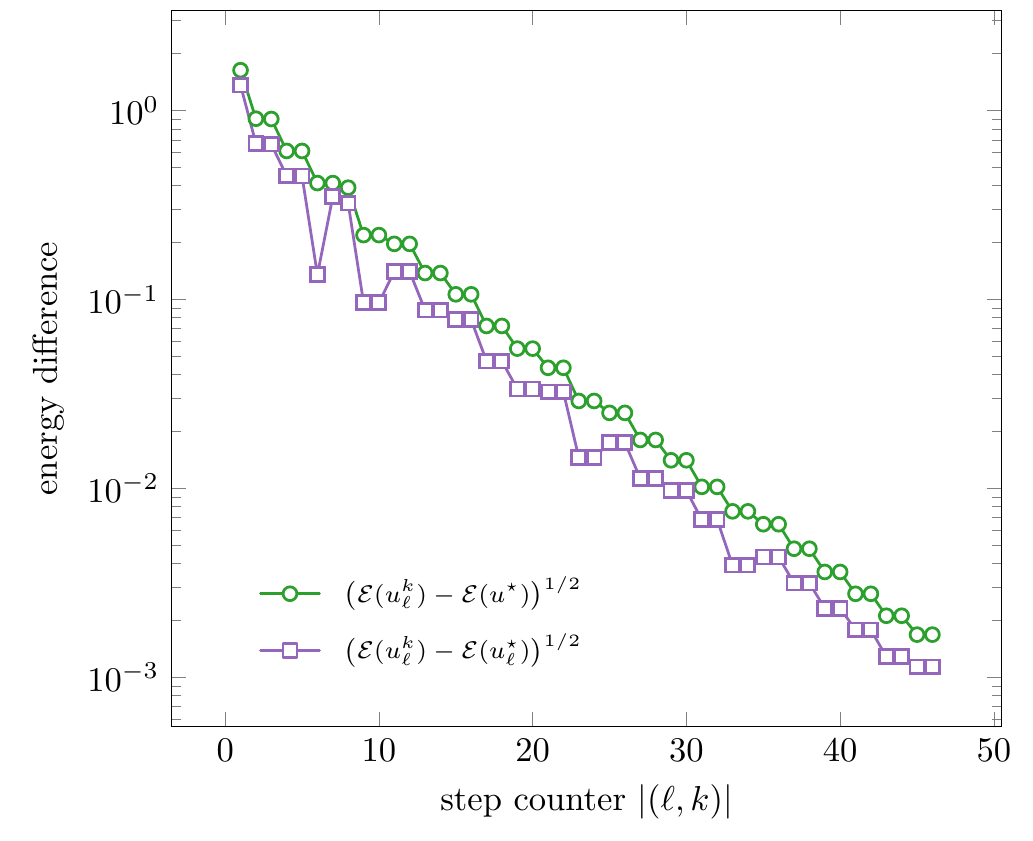}}
			\label{fig:GordonA}
		\caption{Results of Experiment~\ref{example:gordon1} with polynomial degree $m=1$. Left: Error estimator $\eta_\ell(u^k_\ell)$ (diamond, left ordinate) and energy difference of iterative solutions $\big( \EE(u^k_\ell) - \EE(u^\exact) \big)^{1/2}$ (circle, left ordinate) against $\mathtt{work}(\ell,k)$ and the number of Zarantonello steps on $\XX_\ell$ (cross, right ordinate). Right: Energy difference of $\EE(u_\ell^k)$ to $\EE(u^\exact)$ (circle) and to $\EE(u^\exact_\ell)$ (square) over the total step counter $|(\ell,k)|$. Throughout, $\EE(u^\exact)$ is obtained by Aitken extrapolation and $\EE(u^\exact_\ell)$ by sufficient Zarantonello steps on each level $\ell$.} \label{fig:Gordon1}
	\end{figure}
	For $\Omega = (0,1)^2$, let $\XX= H^1_0(\Omega)$ with $\enorm{\, \cdot \, }^{2} = \prod{\nabla \cdot}{\nabla \cdot}$ (i.e., $\varepsilon=1$) and consider
	\begin{align}\label{eq:gordon1}
		-\Delta u^\exact + (u^\exact)^3+ \sin(u^\exact) = f \quad \text{ in } \Omega \quad \text{ subject to } \quad u^\exact= 0 \text{\ on } \partial \Omega, 
	\end{align}
	with the monotone semilinearity $b(v) = v^{3}+ \sin(v)$, which satisfies~\eqref{assump:ell},~\eqref{assump:car}, \eqref{assump:mon}, and~\eqref{assump:poly}. We set $\f = 0$ and choose $f$ in such a way that
	\begin{align*}
		u^\exact(x) = \sin(\pi x_1)\sin(\pi x_2),
	\end{align*}
which satisfies~\eqref{assump:rhs}.
In Figure~\ref{fig:Gordon1}, we plot the \textsl{a~posteriori} estimator $\eta_\ell(u^k_\ell)$ and the energy difference of the iterative solutions $\big( \EE(u^k_\ell) - \EE(u^\exact) \big)^{1/2}$ against the $\mathtt{work}(\ell,k)$ for lowest order FEM $m=1$, where we approximate $\EE(u^\exact)$ by means of Aitken convergence acceleration on uniform meshes with up to $\#\TT_{\mathrm{final}} = 67108864$ degrees of freedom on the finest mesh. The decay rate is of (expected) optimal order $\mathcal{O}(\mathtt{work}(\ell,k)^{-1/2})$ as $|(\ell, k)| \to \infty$. Moreover, the experimentally observed number of sufficient linearization steps $\kmax(\ell)$ is two. Furthermore, in Figure~\ref{fig:Gordon1}, we plot the difference of $\EE(u^k_\ell)$ to the approximated reference energy $\EE(u^\exact)$ using Aitken's acceleration and to the energy $\EE(u^\exact_\ell)$ on $\XX_\ell$ over the step counter $|(\ell,k)|$. The reference energy $\EE(u^\exact_\ell)$ is calculated by a sufficient number of Zarantonello iterations on each level $\ell$ until the energy difference of successive iterates is below the tolerance $\mathtt{tol} < 10^{-15}$.
\end{experiment} 
\begin{experiment}[singularly perturbed sine-Gordon equation]\label{example:gordon2}
	\begin{figure}\label{fig:Gordon2}
		\begin{subfigure}[b]{\textwidth}%
			\includegraphics[width=0.49\textwidth]{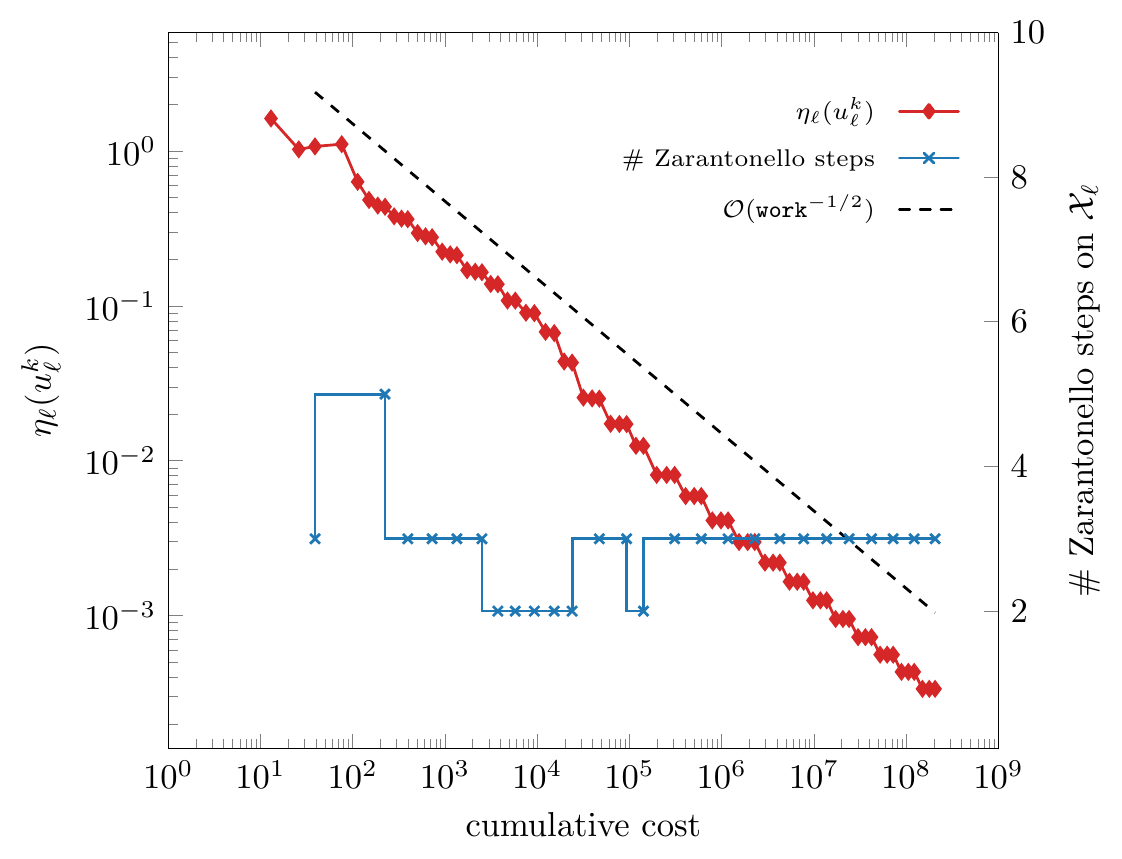} \quad
			\includegraphics[width=0.49\textwidth]{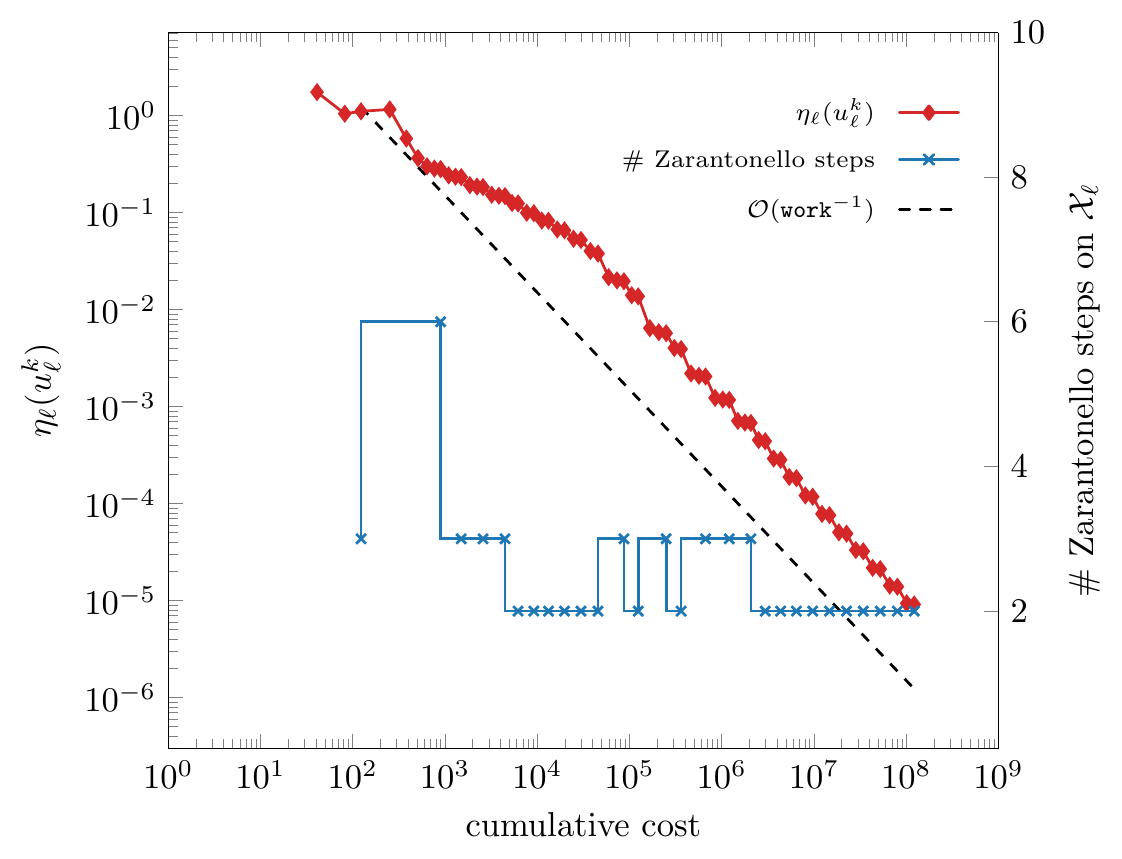}
			\caption{Error estimator $\eta_\ell(u^k_\ell)$ over $\mathtt{work}$ (diamond, left ordinate) and number of Zarantonello iteration steps on $\XX_\ell$ over $\mathtt{work}$ (cross, right ordinate) for $m=1$ (left) and $m=2$ (right).}
			\label{fig:Gordon2A}
		\end{subfigure}
		\bigskip \\
		\begin{subfigure}[b]{0.4\textwidth}
			{\includegraphics[width=\textwidth]{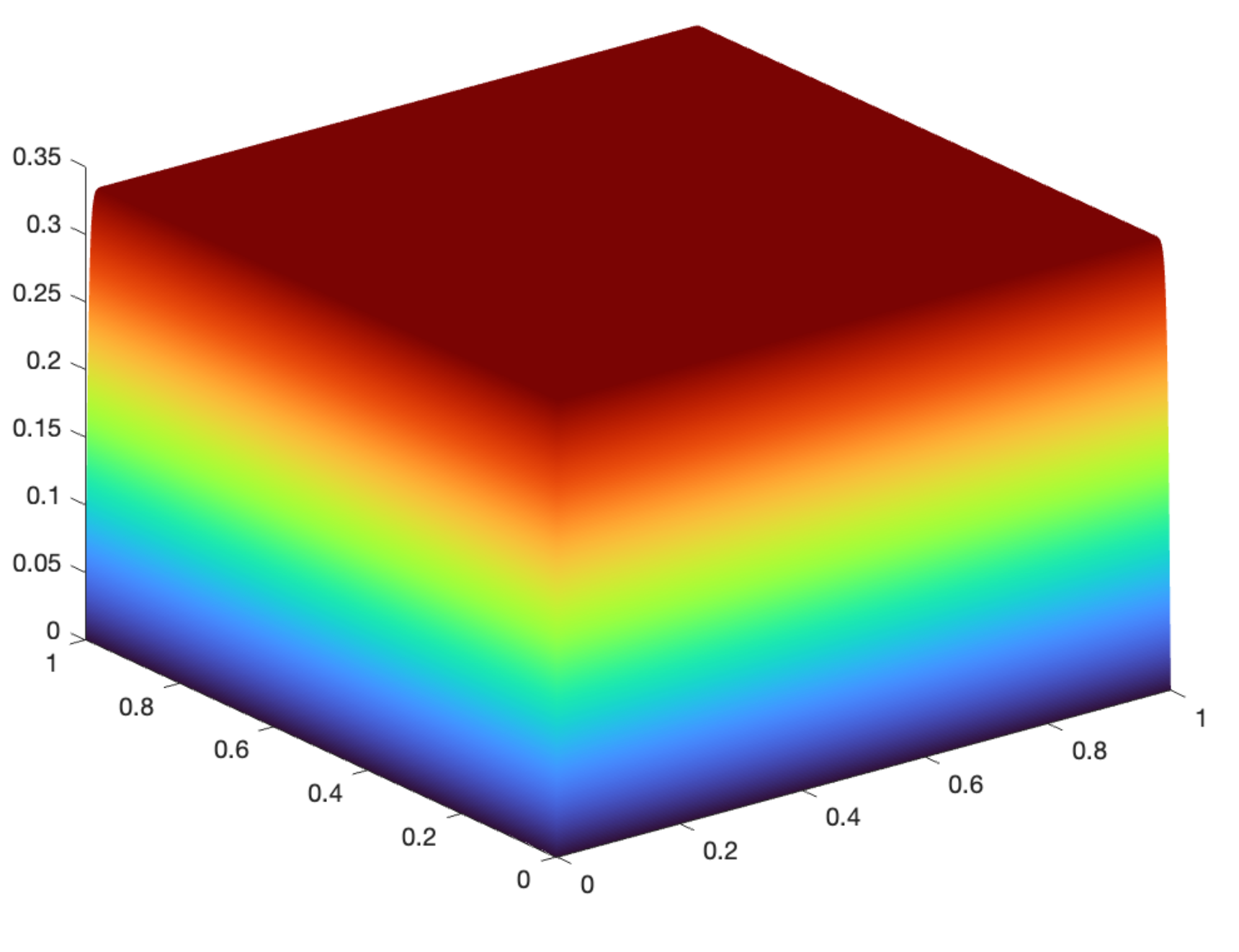}}
			\caption{Approximate solutions $u_\lmax^\kmax$, where $\lmax = 28$, $\kmax(28) = 2$, and $m=2$.}
			\label{fig:Gordon2B}
		\end{subfigure} \quad \quad
		\begin{subfigure}[b]{0.33\textwidth}%
			\includegraphics[width=\textwidth]{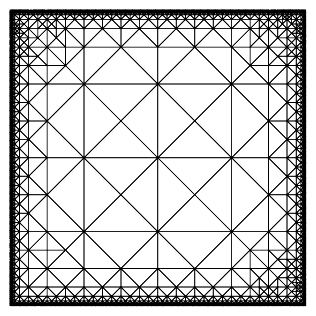}
			\caption{Mesh with $\# \TT_\ell = 4295$, where $\ell = 11$ and $m=1$.}
			\label{fig:Gordon2C}
		\end{subfigure}
		\caption{Using the norm $\enorm{\, \cdot \,}^{2} = \varepsilon\, \prod{\nabla \cdot }{\nabla \cdot } + \prod{\cdot }{\cdot }$ in Experiment~\ref{example:gordon2}. Top: Convergence plot of the error estimator $\eta_\ell(u^k_\ell)$ over $\mathtt{work}(\ell,k)$ and number of Zarantonello iterations on $\XX_\ell$ over $\mathtt{work}$ for $m=1$ (top, left) and $m=2$ (top, right). Bottom: Plot of the approximate solution $u_\ell^{\kmax}$ (bottom, left) and plot of a sample mesh (bottom, right).}%\label{fig:Gordon2}
	\end{figure} This example is a variant of~{\cite[Experiment~5.2]{ahw2022}}. For $d=2$ and $\Omega = (0,1)^2$, let $\varepsilon = 10^{-5}$ and consider
	\begin{align*}%\label{eq:gordon2}
		-\varepsilon \Delta u^\exact + 2u^\exact+ \sin(u^\exact) = 1 \quad \text{ in } \Omega \quad \text{ subject to } \quad u^\exact= 0\text{\ on } \partial \Omega, 
	\end{align*}
	with the monotone semilinearity $b(v) = v+ \sin(v)$. In this case, the exact solution $u^\exact$ is unknown.  The used $\XX$-norm is given by $\enorm{\, \cdot\, }^{2} = \varepsilon\, \prod{\nabla \cdot }{\nabla \cdot } + \prod{\cdot }{\cdot }$. The particular choice of the $\XX$-norm allows for $\alpha = 1$ due to the monotonicity of $b(v)$. The problem clearly satisfies~\eqref{assump:ell},~\eqref{assump:car}, \eqref{assump:mon}, and~\eqref{assump:poly}. Moreover, $f = 1$ and $\f = 0$ satisfy~\eqref{assump:rhs}. In this experiment, we employ a slight modification of the error estimator~\eqref{eq:estimator:primal} following~\cite[Remark~4.14]{v2013}
	\begin{align*}
		\begin{split}
			\eta_\coarse(T, v_\coarse)^2 &\coloneqq \hslash_T^2 \,\norm{f + \varepsilon \Delta v_\coarse  - b(v_\coarse)}{L^2(T)}^2  + \hslash_T \, \norm{\jump{\varepsilon \, \nabla v_\coarse  \, \cdot \, \n }}{L^2(\partial T \cap \Omega)}^2,
		\end{split}
	\end{align*}
	where the scaling factors $\hslash_T = \min\{ \varepsilon^{-1/2} \, h_T, 1 \}$ ensure $\varepsilon$-robustness of the estimator. 
	
	In Figure~\ref{fig:Gordon2A}, we plot the error estimator $\eta_\ell(u^k_\ell)$ for  all $(\ell, k) \in \QQ$ against the $\mathtt{work}(\ell,k)$  for polynomial degrees $m \in \{1,2\}$. The decay rate is of (expected) optimal order \allowbreak $\mathcal{O}(\mathtt{work}(\ell,k)^{-m/2})$ as $|(\ell, k)| \to \infty$. The number of Zarantonello steps on each mesh refinement level $\ell$ stabilizes for $m \in \{1,2\}$ at three ($m=1$) and two ($m=2$) after an initial phase. For $m=2$, Figure~\ref{fig:Gordon2B} shows the approximate solution $u_\ell^\kmax$, where $\ell = 28$ and $\kmax(28) = 2$. Figure~\ref{fig:Gordon2C} depicts a mesh plot for $\# \TT_\ell = 4295$ for $\ell = 11$ and $m=1$. In particular, this experiment shows that Algorithm~\ref{algorithm:practical} is suitable for a setting with dominating nonlinear reaction given that a suitable norm on $\XX$ is chosen. Furthermore, we remark that the nonlinearity $b(v) = v + \sin(v)$ is globally Lipschitz continuous with Lipschitz constant $L=2$. In our experiments, $\delta_\ell$ is decreased twice, i.e., $\delta_\ell$ decreases from $1$ to $0.5 = 1/L$, which is optimal according to Remark~\ref{remark:delta} and remains uniformly bounded from below; cf.~Remark~\ref{rem:comparison}.
\end{experiment} 
\begin{experiment}[Goal-oriented AILFEM (GAILFEM)]\label{example:ms} 
	\begin{figure}
		\captionsetup[subfigure]{justification=justified}
	\begin{subfigure}[b]{0.48\textwidth}
		\centering
		\includegraphics[width=\textwidth, valign=t]{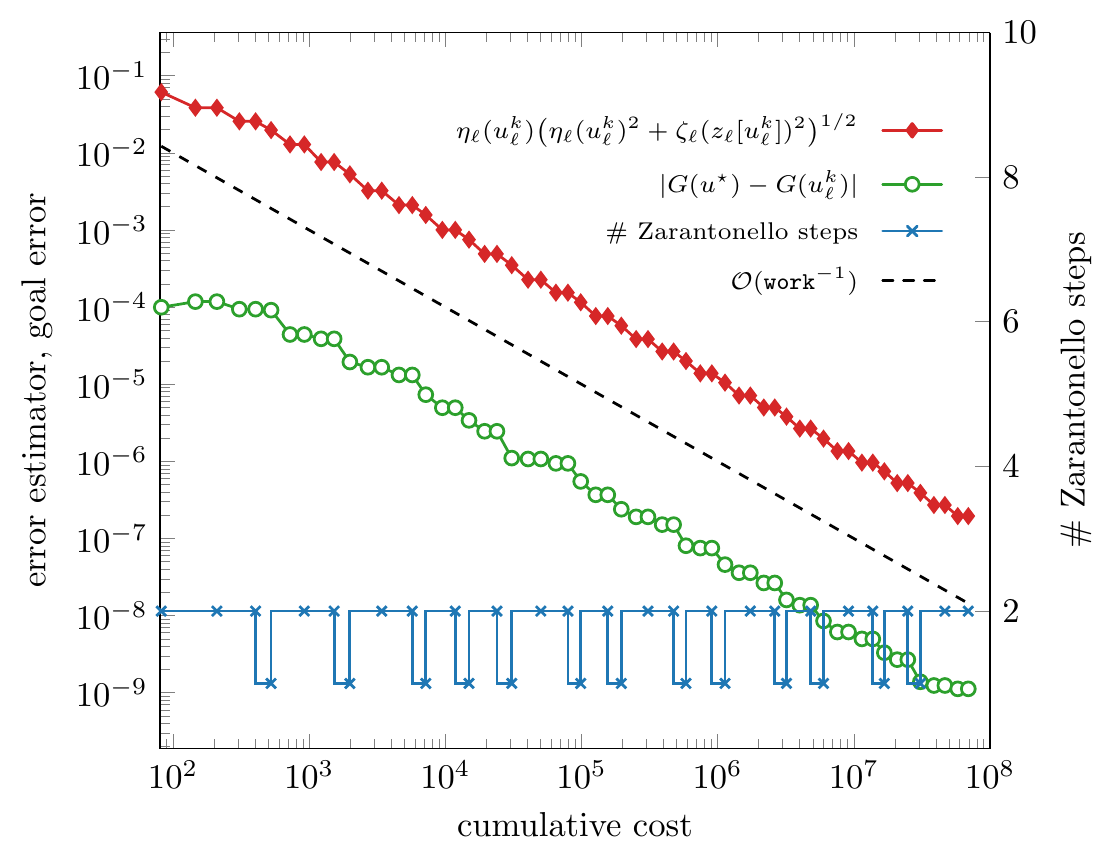}
		\caption{Results for $m=1$.}
		\label{fig:GAIL1}
	\end{subfigure} \quad
\begin{subfigure}[b]{0.48\textwidth}
	\centering
\includegraphics[width=\textwidth, valign=t]{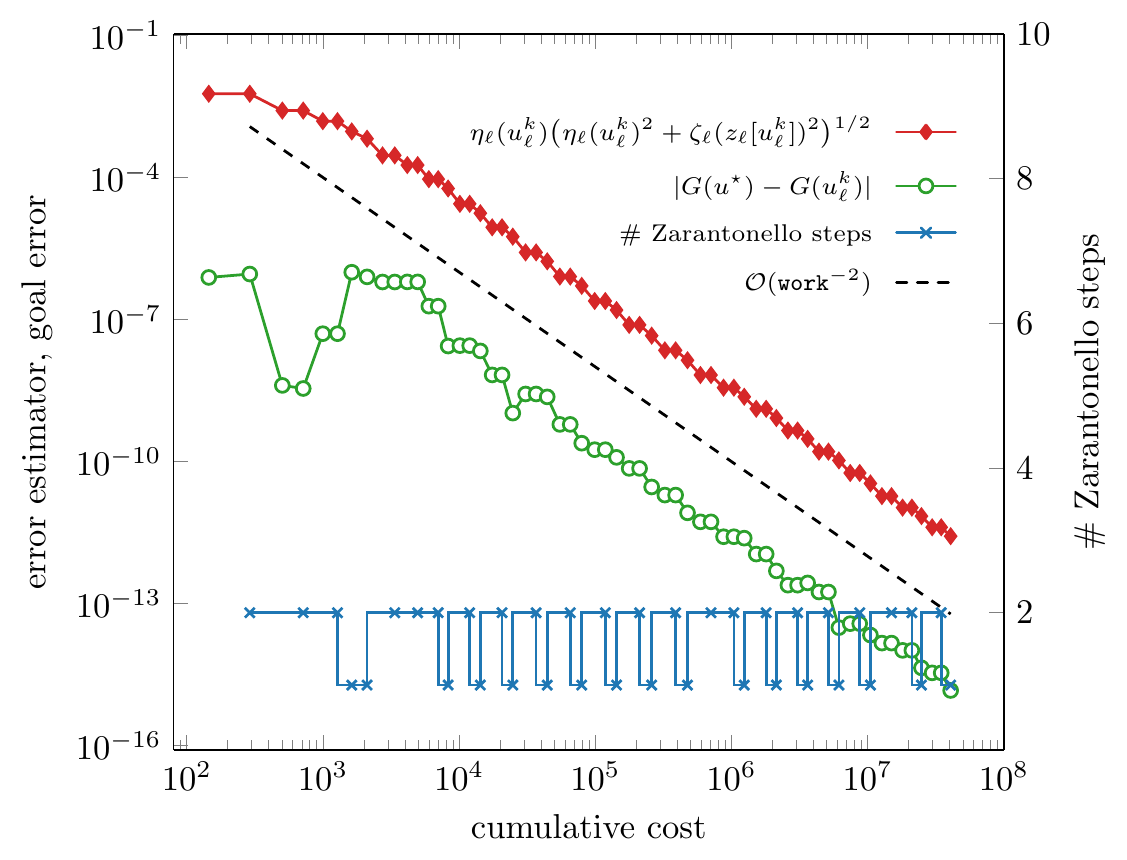}
\caption{Results for $m=2$.}
\label{fig:GAIL2}
	\end{subfigure} 
\begin{subfigure}[b]{0.48\textwidth}
	\centering
\includegraphics[width=\textwidth]{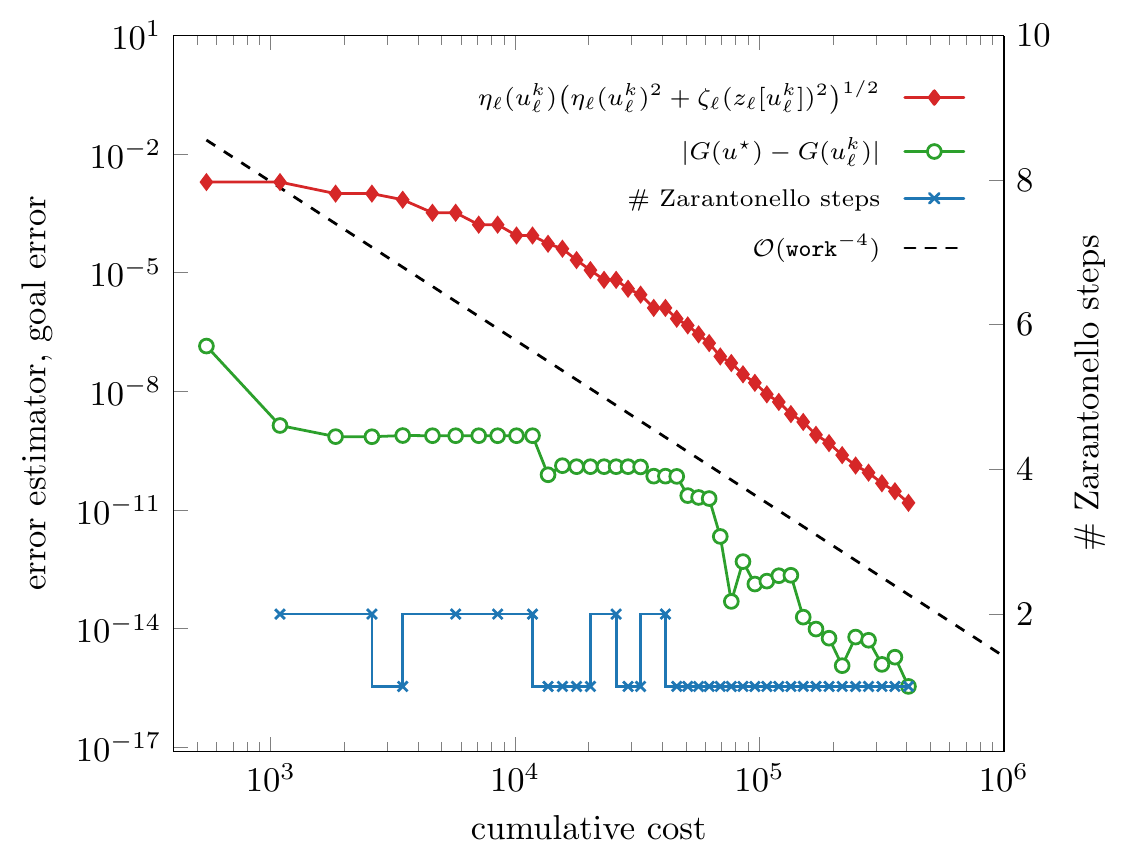}
\caption{Results for $m=4$.}
\label{fig:GAIL4}
\end{subfigure}\quad 
\begin{subfigure}[b]{0.48\textwidth}
	\centering
	\includegraphics[width=\textwidth]{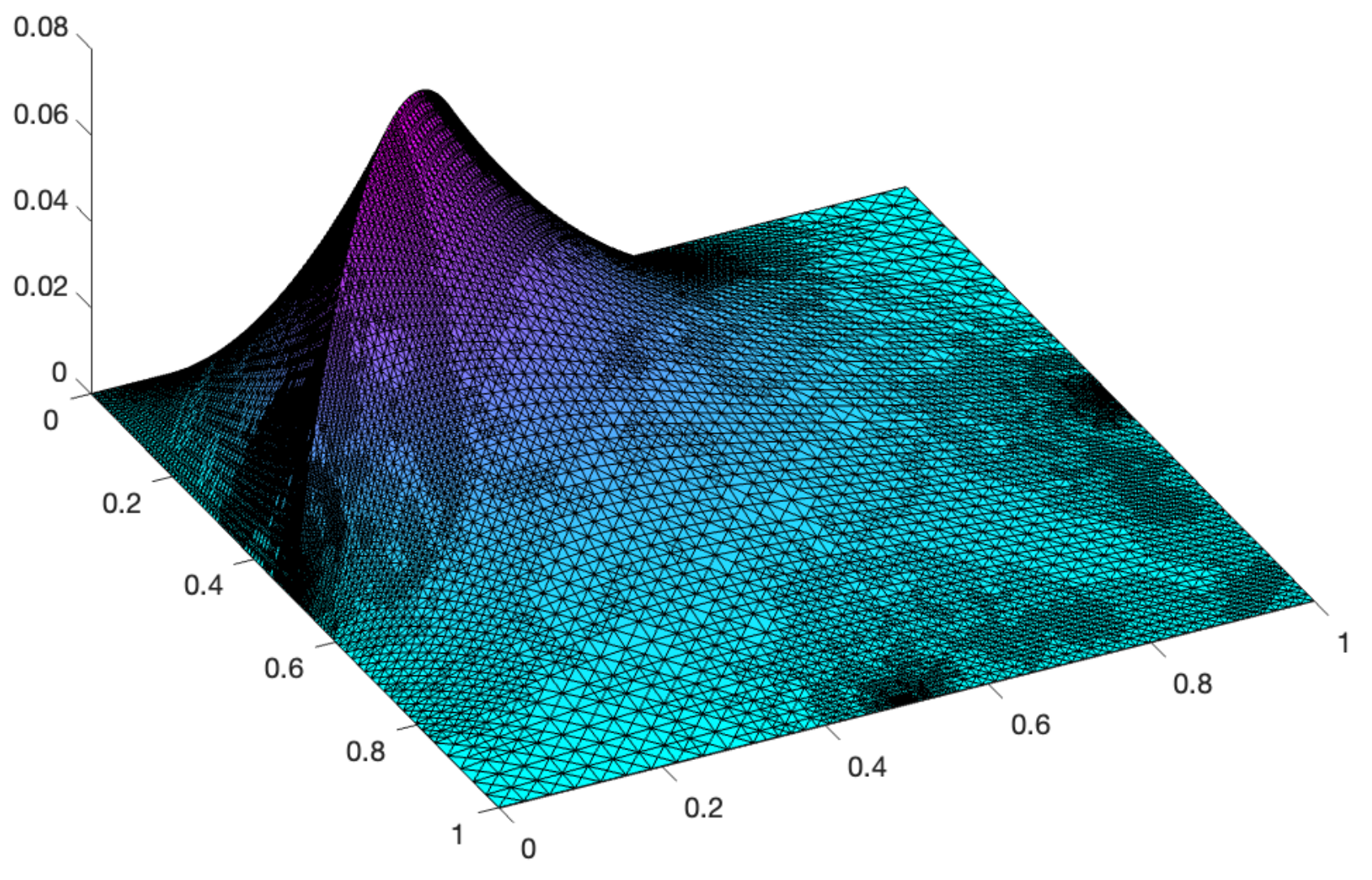}
	\caption{Plot of iterative solution $u^{\kmax}_\ell$, where $\ell = 16$, $\kmax(16) = 2$, $\dim(\XX_{\ell}) = 14599$, and $m=1$.}
	\label{fig:GAILplot}
\end{subfigure}
	\caption{\ref{fig:GAIL1}--\ref{fig:GAIL4}: Product error estimator $\eta_\ell(u_\ell^{k}) \big[\eta_\ell (u_\ell^{k})^2 + \zeta_\ell(z_\ell[u_\ell^{k}])^2\big]^{1/2}$ (diamond, left ordinate), absolute goal error $|G(u^\exact) - G(u_\ell^{k})|$ (circle, left ordinate), and number of Zarantonello steps on $\XX_\ell$ over $\mathtt{work}$ (cross, right ordinate) for $m=1$ (top, left), $m=2$ (top, right), and $m=4$ (bottom, left). \ref{fig:GAILplot}: Plot of an iterative solution $u^{\kmax}_\ell$. (bottom, right).}\label{fig:2d}
\end{figure}

	We also test a canonical extension of Algorithm~\ref{algorithm:practical} in a goal-oriented setting similar to that of~\cite[Example~7.3]{ms2009}. A thorough treatment of this problem (and the assumptions thereof) is found in~\cite[Example~35]{bbimp2022}. We use the proposed practical Algorithm~\ref{algorithm:practical} as the solve module for the semilinear primal problem in the GOAFEM algorithm \cite[Algorithm~17]{bbimp2022}. Let $\Omega= (0,1)^2$ and $\varepsilon = 1$. The weak formulation of the primal problem reads: Find $u^\exact \in H^1_0(\Omega)$ such that
	\begin{align}
		\prod{\nabla u^\exact}{\nabla v} + \prod{b(u^\exact)}{v} = \int_{\Omega} \f \cdot \nabla v \d{x}, \quad \text{ for all } v \in H^1_0(\Omega),
	\end{align}
	where $b(v) = v^3$ and $\f =  \chi_{\Omega_{\f}}\, (-1, 0)$ with the characteristic function $\chi_{\Omega_{\f}}$ of $\Omega_{\f} = \set{x \in \Omega}{x_1 + x_2 \le \tfrac{1}{2}}$.
	The weak formulation of the practical dual problem for the linearization point $w \in H^1_0(\Omega)$ reads: Find $z^\exact[w] \in H^1_0(\Omega)$ such that
	\begin{align*}
		\prod{\nabla z^\exact[w]}{\nabla v} + \prod{b'(w)z^\exact[w]}{v} = \int_{\Omega} \g \cdot \nabla v \d{x}, \quad \text{ for all } v \in H^1_0(\Omega),
	\end{align*}
	where $b'(v) = 3v^2$ and $\g = \chi_{\Omega_{\g}}\,(-1, 0)$ with $\Omega_{\g}= \set{x \in \Omega}{x_1 + x_2 \ge \tfrac{3}{2}}$. The goal functional thus reads
	\begin{align*}
		G(v) \coloneqq - \int_{\Omega_{\g}} \frac{\partial v}{\partial x_1} \d{x}\quad \text{ for all } v\in H^1_0(\Omega). 
	\end{align*}.
	Since $\div(\g) = 0$ on every element $T \in \TT_0$, the associated error estimator for the dual problem reads 
		\begin{align}
			\begin{split}\label{eq:estimator:dual}
				\zeta_\coarse(w; T, v_\coarse)^2 &\coloneqq h_T^2 \,\norm{\Delta v_\coarse - b'(w)(v_\coarse)}{L^2(T)}^2 + h_T \, \norm{\jump{(\nabla v_\coarse - \g ) \, \cdot \, \n }}{L^2(\partial T \cap \Omega)}^2.
			\end{split}
		\end{align}
 We used $\enorm{\, \cdot\, }^2 = \sprod{\cdot \, }{\, \cdot}$ as the $\XX$-norm. For various polynomial degrees $m \in \{1, 2, 4\}$, Figure~\ref{fig:GAIL1}--\ref{fig:GAIL4} shows the results of the proposed GAILFEM algorithm driven by the product estimator $\eta_\ell(u_\ell^{k}) \allowbreak \big[\eta_\ell (u_\ell^{k})^2 + \zeta_\ell(z_\ell[u_\ell^{k}])^2\big]^{1/2}$, which is an upper bound to the goal error difference $G(u^\exact)- G(u^\exact_\ell)$ and a viable way to recover optimal convergence rates; cf.~\cite{bbimp2022}. We plot the estimator product $\eta_\ell(u_\ell^{k}) \big[\eta_\ell (u_\ell^{k})^2 + \allowbreak \zeta_\ell(z_\ell[u_\ell^{k}])^2\big]^{1/2}$, the number of Zarantonello steps, and the absolute goal error difference $|G(u^\exact) - G(u_\ell^{k})|$ over the $\mathtt{work}(\ell,k)$, where $G(u^\exact) = -0.0015849518088245$ serves as a reference value; see \cite[Example~35]{bbimp2022}. In Figure~\ref{fig:GAILplot}, we plot the sample solution $u^{\kmax}_\ell$, where $\ell = 13$, $\kmax(13) = 2$, and $m=1$.
	
	The decay rate is of (expected) optimal order $\mathcal{O}(\mathtt{work}(\ell,k)^{- \,m})$ for $|(\ell, k)| \to \infty$, where $m\in \{1,2,4\}$ is the polynomial degree of the FEM space $\XX_\ell$. The number of Zarantonello steps does not exceed two for $m= \{1, 2, 4\}$ and stabilizes after an initial phase at one for $m=4$, respectively. Figure~\ref{fig:gailfem:meshes} depicts two meshes for $m=1$ and $m=4$.

\begin{figure}
	\begin{subfigure}[t]{0.38\textwidth}
		\centering
		{\includegraphics[width=\textwidth, valign=t]{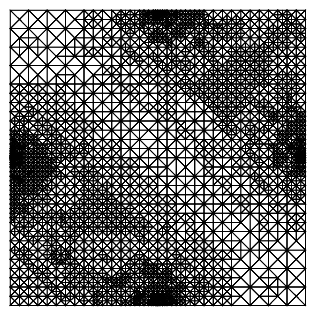}}
		\caption{Mesh generated for $m=1$, where $\dim \XX_\ell = 3092$ and $\ell = 12$.}
		\label{fig:GAIL:mesh:1}
	\end{subfigure} \quad \quad 
\begin{subfigure}[t]{0.38\textwidth}
	{\includegraphics[width=\textwidth, valign=t]{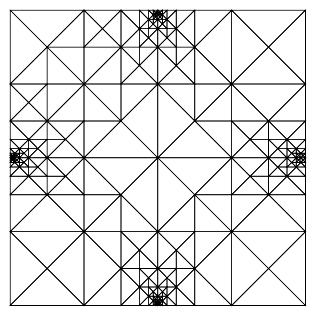}}
	\caption{Mesh generated for $m=4$, where $\dim \XX_\ell = 3081$ and $\ell = 12$.}
		\label{fig:GAIL:mesh:4}
	\end{subfigure}
	\caption{Generated GAILFEM meshes for $m=1$~{(Figure \ref{fig:GAIL:mesh:1})} and $m=4$~(Figure~\ref{fig:GAIL:mesh:4}).}
		\label{fig:gailfem:meshes}%\label{fig:2d}
\end{figure}
\end{experiment}
%\clearpage
%!TEX root = ailfem_semilinear.tex

%%%%%%%%%%%%%%%%%%%%%%%%%%%%%%%%%%%%%%%%%%%%%%%%%%%%%%%%%%%%%%%%
%%%%%%%%%%%%%%%%%%%%%%%%%%%%%%%%%%%%%%%%%%%%%%%%%%%%%%%%%%%%%%%
\appendix
\section{Convergence for vector-valued semilinear pdes}\label{section:appendix}
This appendix aims to extend the analysis from Section~\ref{section:monlip} to problems where the monotone operator does not have a potential, e.g., vector-valued semilinear PDEs.
We prove plain convergence of Algorithm~\ref{algorithm:idealized} without the assumption~\eqref{eq:potential} and with the modified stopping criterion
		\begin{align}\label{eq:stopp:alternative}
			\enorm{u^{k}_\ell - u^{k-1}_\ell} \le \lambda \, \eta_\ell(u^k_\ell) \quad \land \quad \enorm{u^k_\ell} \le 2M \tag{{\rm i.b$''$}}
			\end{align}
		 replacing Algorithm~\ref{algorithm:idealized}{\eqref{eq:stoppingcrit}}. The proof requires some preliminary observations:  First, the convergence of the exact discrete solutions $u^\exact_\ell$ towards the exact solution $u^\exact_\infty$ in the so-called discrete limit space, which dates back to the seminal work~\cite{bv1984}. Second, we need to show that the approximate discrete solutions $u^k_\ell$ converge to the same limit.
\begin{lemma}\label{lemma:inexact}
	Suppose that $\AA$ satisfies~\eqref{eq:strongly-monotone} and~\eqref{eq:locally-lipschitz}. With the discrete subspaces $\XX_\ell \subset \XX$ from Algorithm~\ref{algorithm:idealized} (with or without the modified stopping criterion~\eqref{eq:stopp:alternative}), define the discrete limit space $\XX_\infty := \overline{\bigcup_{\ell = 0}^{\lmax} \XX_\ell}$, where we recall that $\lmax = \sup \set{ \ell \in \N_0}{(\ell,0) \in \QQ}$. Then, there exists a unique $u^\exact_\infty \in \XX_\infty$ which solves
	\begin{align}\label{eq:discretelimitspace}
		\prod{\AA u^\exact_\infty}{v_\infty} = \prod{F}{v_\infty} \quad \text{ for all } v_\infty \in \XX_\infty.
	\end{align}
	 Moreover, given the exact discrete solutions $u^\exact_\ell \in \XX_\ell$, it holds that
	\begin{align}\label{eq:plain:exact}
		\enorm{u^\exact_\infty - u_\ell^\exact} \to 0 \quad \text{ as } \quad \ell \to \lmax.
	\end{align}
Additionally, suppose~\eqref{assumption:stab}--\eqref{assumption:rel} and suppose that the choice of $\delta >0$ in Algorithm~\ref{algorithm:idealized} ensures norm contraction~\eqref{crucial:contractivity}. Then, the approximations  $u_\ell^{k}$ computed in Algorithm~\ref{algorithm:idealized} fulfil that
\begin{align}~\label{eq:lemma:inexact}
	\enorm{u^\exact_\infty - u_\ell^{k}} \to 0 \quad \text{ as } \quad (\ell, k) \in \QQ \quad \text{ with } \quad  |(\ell,k)| \to \infty.
\end{align} \vskip-\lastskip
\end{lemma}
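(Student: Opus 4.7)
The plan is to proceed in three stages. \textbf{First}, since $\XX_\infty \coloneqq \overline{\bigcup_{\ell} \XX_\ell}$ is by construction a closed subspace of $\XX$, the operator $\AA$ restricted to $\XX_\infty$ still fulfils~\eqref{eq:strongly-monotone} and~\eqref{eq:locally-lipschitz}. Hence, I would apply Proposition~\ref{prop:existence} with $\XX_\infty$ in place of $\XX$ to obtain existence and uniqueness of $u^\exact_\infty \in \XX_\infty$ solving~\eqref{eq:discretelimitspace}, together with the uniform bound $\enorm{u^\exact_\infty} \le M$.

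\textbf{Second}, to establish the convergence~\eqref{eq:plain:exact} of the exact Galerkin approximations, I would reuse the C\'{e}a-type argument from the proof of Proposition~\ref{prop:existence}, now with $u^\exact_\infty$ playing the role of the ``continuous'' solution and $\XX_\ell \subseteq \XX_\infty$ that of the Galerkin subspace. Since $\enorm{u^\exact_\infty}, \enorm{u^\exact_\ell} \le M$, the Lipschitz constant $L[2M]$ suffices and yields
\begin{align*}
\enorm{u^\exact_\infty - u^\exact_\ell} \le \Ccea \, \min_{v_\ell \in \XX_\ell} \enorm{u^\exact_\infty - v_\ell}.
\end{align*}
By nestedness $\XX_\ell \subseteq \XX_{\ell+1}$ together with the definition of $\XX_\infty$ as the closure of $\bigcup_\ell \XX_\ell$, the right-hand side tends to zero as $\ell \to \lmax$. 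The degenerate case $\lmax < \infty$ is trivial: the finite-dimensional space $\XX_\lmax$ is closed, so $\XX_\infty = \XX_\lmax$ and $u^\exact_\infty = u^\exact_\lmax$ by uniqueness.

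\textbf{Third}, for the inexact statement~\eqref{eq:lemma:inexact}, I would split
\begin{align*}
\enorm{u^\exact_\infty - u^k_\ell} \le \enorm{u^\exact_\infty - u^\exact_\ell} + \enorm{u^\exact_\ell - u^k_\ell}.
\end{align*}
The first term is handled by~\eqref{eq:plain:exact} provided $\ell \to \lmax$, which I would verify by case distinction: if $\lmax < \infty$, the only way $|(\ell_n,k_n)| \to \infty$ is $\ell_n = \lmax$ and $k_n \to \infty$, reducing everything to the contraction on the fixed space $\XX_\lmax$ (where $u^\exact_\lmax = u^\exact_\infty$); if $\lmax = \infty$, each $\kmax(\ell)$ is finite and therefore $\ell_n \to \infty$ is forced. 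The second term is controlled via Corollary~\ref{cor1:zarantonello}, which applies because the modified stopping criterion~\eqref{eq:stopp:alternative} enforces the nested iteration bound $\enorm{u^0_\ell} \le 2M$; this gives
\begin{align*}
\enorm{u^\exact_\ell - u^k_\ell} \le \qN^k \, a_\ell \quad \text{ with } \quad a_\ell \coloneqq \enorm{u^\exact_\ell - u^0_\ell} \le 3M.
\end{align*}
For $\lmax = \infty$, I would derive the recursion $a_\ell \le \varepsilon_{\ell-1} + \qN \, a_{\ell-1}$ with $\varepsilon_{\ell-1} \coloneqq \enorm{u^\exact_\ell - u^\exact_{\ell-1}}$, using $u^0_\ell = u^\kmax_{\ell-1}$, the triangle inequality, and $\kmax(\ell-1) \ge 1$ combined with the contraction $\enorm{u^\exact_{\ell-1} - u^\kmax_{\ell-1}} \le \qN^{\kmax(\ell-1)} a_{\ell-1}$.

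The main obstacle is converting this recursion into $a_\ell \to 0$, because the estimate $\enorm{u^\exact_\ell - u^k_\ell} \le \qN^k \, a_\ell$ does not vanish on its own when $k$ is bounded (e.g.\ at fresh steps $k = 0, 1$ on deep levels). Since $\varepsilon_{\ell-1} \to 0$ by~\eqref{eq:plain:exact} and $\qN < 1$, a standard $\varepsilon/2$-argument --- choosing $\ell_0$ so that $\varepsilon_\ell < (1-\qN)\varepsilon/2$ for $\ell \ge \ell_0$ and exploiting the geometric damping of $\qN^{\ell - \ell_0} a_{\ell_0}$ for $\ell$ large --- then yields $a_\ell \to 0$, uniformly in $k$, which concludes~\eqref{eq:lemma:inexact}.
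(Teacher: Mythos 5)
Your argument is correct and follows essentially the same route as the paper's: apply Proposition~\ref{prop:existence} to the closed limit space $\XX_\infty$, obtain~\eqref{eq:plain:exact} via the C\'{e}a estimate, split $\enorm{u^\exact_\infty - u^k_\ell}$ into a level error and an iteration error, and close the latter with the geometric recursion $a_\ell \le \qN\, a_{\ell-1} + \varepsilon_{\ell-1}$ driven by the contraction and the Cauchy property of $(u^\exact_\ell)_\ell$. The only cosmetic difference is your choice $a_\ell = \enorm{u^\exact_\ell - u^0_\ell}$ versus the paper's $a_\ell = \enorm{u^\exact_\ell - u^{\kmax}_\ell}$ (equivalent up to one contraction step), and your inline $\varepsilon/2$-argument where the paper cites the elementary lemma~\cite[Corollary~4.8]{axioms}.
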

\begin{proof}
	The proof consists of three steps.

{\bf Step 1 (exact solutions).} Since $\XX_\ell \subseteq \XX_{\ell+1} \subset \XX$, the discrete limit space $\XX_\infty := \overline{\bigcup_{\ell = 0}^{\lmax} \XX_\ell}$ is a closed subspace of $\XX$. Proposition~\ref{prop:existence} proves the existence of a unique $u_\infty^\exact \in \XX_\infty$ satisfying~\eqref{eq:discretelimitspace}. The Galerkin solutions $u^\exact_\ell$ from~\eqref{eq:weakform:discrete} are also Galerkin approximations of $u^\exact_\infty$. Hence, there holds the C\'{e}a-type estimate
\begin{align}
	\enorm{u^\exact_\infty - u^\exact_\ell} \eqreff{eq:lemma:cea}{\le} \Ccea \min_{v_\ell \in \XX_\ell} \enorm{u^\exact_\infty - v_\ell} \xrightarrow[]{\ell \to \lmax} 0,
\end{align} 
where convergence follows by definition of $\XX_\infty$. 

{\bf Step 2 (approximate solutions for $\lmax = \infty$).} The norm contraction~\eqref{crucial:contractivity} and $u^0_{\ell+1} = u^\kmax_\ell$ reveal that
\begin{align*}
	0 \le \enorm{u_{\ell+1}^\exact - u_{\ell+1}^{\kmax(\ell+ 1)}} \eqreff{crucial:contractivity}{\le} \qN^{{\kmax(\ell + 1)}}  \enorm{u_{\ell+1}^\exact - u_{\ell+1}^0} \le \qN \big[ \enorm{u_{\ell}^\exact - u_{\ell}^{{\kmaxl}}} + \enorm{u_{\ell+1}^\exact - u_{\ell}^\exact} \big].
\end{align*}
From Step~1, we infer that $(u^\exact_\ell)_{\ell \in \N_0}$ is a Cauchy sequence. Defining $a_\ell := \enorm{u^\exact_\ell - u^{\kmax}_\ell}$ and $b_\ell := \qN\, \enorm{u^\exact_{\ell+1} - u^\exact_\ell}$, the last estimate can be rewritten as
\begin{align*}
	0 \le a_{\ell + 1} \le \qN\, a_\ell + b_\ell, \quad \text{ where } \quad \lim_{\ell \to \infty} b_\ell = 0.
\end{align*}
It follows from elementary calculus (cf.~\cite[Corollary~4.8]{axioms}) that
\begin{align*}
0= \lim_{\ell \to \infty} a_\ell = \lim_{\ell \to \infty} \enorm{u^\exact_\ell - u^{\kmax}_\ell}.
\end{align*}
Altogether, we obtain that
\begin{align*}
	\enorm{u^\exact_\infty - u_\ell^{{k}}} &\le \enorm{u^\exact_\infty - u_\ell^{\exact}} + \enorm{u^\exact_\ell - u_\ell^{{k}}} \eqreff{crucial:contractivity}{\le} \enorm{u^\exact_\infty - u_\ell^{\exact}} + \enorm{u^\exact_\ell - u_\ell^{0}} \\
& \le  \enorm{u^\exact_\infty - u_\ell^{\exact}} +  \enorm{u^\exact_\ell - u_{\ell-1}^{\exact}} + \enorm{u^\exact_{\ell-1} - u_{\ell-1}^{\kmax}} \to 0 \quad \text{ as } \quad \ell \to \infty.
\end{align*}

{\bf Step 3 (approximate solutions for $\lmax < \infty$ and $\kmaxl = \infty$).} It holds that $u^\exact_\infty = u^\exact_\lmax$ and hence, due to~\eqref{crucial:contractivity}, 
	\begin{align*}
		\enorm{u^\exact_\infty \!- u^k_\ell} = \enorm{u^\exact_\lmax \!- u^k_\lmax} \to 0 \quad \text{ as } \quad |(\ell, k)| \to \infty.
	\end{align*}
This concludes the proof.
\end{proof}

The following theorem states plain convergence in the abstract setting of the proposed AILFEM algorithm.
\begin{theorem}[Plain convergence]
	\label{prop:plain-convergence}
	Suppose that $\AA$ satisfies~\eqref{eq:strongly-monotone} and~\eqref{eq:locally-lipschitz}. Suppose the axioms of adaptivity~\eqref{assumption:stab}--\eqref{assumption:rel}. Suppose that the choice of $\delta >0$ in Algorithm~\ref{algorithm:idealized} ensures~\eqref{crucial:contractivity}. Then, for any choice of the marking parameters $0 < \theta \le 1$, $\lambda >0$, and $1 \le \Cmark \le \infty$, Algorithm~\ref{algorithm:idealized} with modified stopping criterion~\eqref{eq:stopp:alternative} guarantees convergence of the quasi-error from~\eqref{eq:quasi-error}, i.e.,
	\begin{align}\label{appendix:eq:quasi-error}
	\Delta_\ell^k	= \enorm{u^\exact - u_\ell^{k}} + \eta_\ell(u_\ell^{k}) \to 0 \quad \text{ as } (\ell, k) \in \QQ \text{ with }  |(\ell, k)| \to \infty.
	\end{align} \vskip-\lastskip
\end{theorem}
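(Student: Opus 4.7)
The plan is to distinguish the two possible termination scenarios of Algorithm~\ref{algorithm:idealized} with stopping criterion~\eqref{eq:stopp:alternative}: (a)~$\lmax = \infty$ (the outer loop runs forever), and (b)~$\lmax < \infty$ with $\kmax(\lmax) = \infty$ (the inner loop at the final level does not terminate).

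In scenario~(b), since the $k$-loop does not terminate, the stopping criterion~\eqref{eq:stopp:alternative} fails for every $k \in \N$. By the same argument as in Corollary~\ref{cor:crucial}, norm contraction~\eqref{crucial:contractivity} alone implies $\enorm{u_\lmax^k} \le 2M$ for all $k$ sufficiently large. Consequently, the linearization-based condition $\enorm{u_\lmax^k - u_\lmax^{k-1}} \le \lambda\,\eta_\lmax(u_\lmax^k)$ must fail for all $k \ge k_0$. Since norm contraction yields $u_\lmax^k \to u_\lmax^\exact$ and hence $\enorm{u_\lmax^k - u_\lmax^{k-1}} \to 0$, while stability~\eqref{assumption:stab} yields $\eta_\lmax(u_\lmax^k) \to \eta_\lmax(u_\lmax^\exact)$, this forces $\eta_\lmax(u_\lmax^\exact) = 0$. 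Reliability~\eqref{assumption:rel} then gives $u^\exact = u_\lmax^\exact$, and $\Delta_\lmax^k \to 0$ follows at once.

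In scenario~(a), Lemma~\ref{lemma:inexact} provides $u_\ell^k \to u_\infty^\exact$ in the energy norm as $|(\ell,k)| \to \infty$, with $u_\infty^\exact \in \XX_\infty$ solving~\eqref{eq:discretelimitspace}. The strategy is to reduce everything to the single statement
\begin{equation}\label{eq:plan:goal}
\eta_\ell(u_\ell^\exact) \to 0 \quad \text{as } \ell \to \infty.
\end{equation}
Granted~\eqref{eq:plan:goal}, reliability~\eqref{assumption:rel} gives $u_\ell^\exact \to u^\exact$; combined with $u_\ell^\exact \to u_\infty^\exact$ from Lemma~\ref{lemma:inexact}, this identifies $u^\exact = u_\infty^\exact$, and Lemma~\ref{lemma:inexact} then yields $\enorm{u^\exact - u_\ell^k} \to 0$. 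Stability~\eqref{assumption:stab} applied to the pair $(u_\ell^\exact, u_\ell^k)$, together with a uniform bound $\enorm{u_\ell^k} \le \tau/2$ analogous to Lemma~\ref{cor:zarantonello:termination} but derived from norm contraction~\eqref{crucial:contractivity} in place of energy contraction, transfers~\eqref{eq:plan:goal} into $\eta_\ell(u_\ell^k) \to 0$.

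The main step, and the principal obstacle, is thus~\eqref{eq:plan:goal}. I would establish it by following the classical estimator convergence argument of Morin-Siebert-Veeser (see, e.g.,~\cite{msv2008} or~\cite[Section~4]{axioms}): decompose $\TT_\ell = \TT_\ell^{(0)} \cup \TT_\ell^{(+)}$ into the elements that are never refined and the elements that are eventually refined, respectively. Reduction~\eqref{assumption:red} combined with D\"orfler marking makes the $\TT_\ell^{(+)}$-contribution to $\eta_\ell(u_\ell^\exact)^2$ shrink geometrically, while the $\TT_\ell^{(0)}$-contribution is controlled by replacing $u_\ell^\exact$ with (an approximation of) $u_\infty^\exact$ via stability~\eqref{assumption:stab} and exploiting that the Lebesgue measure of the eventually-refined region tends to zero. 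The specific difficulty compared to the classical setting is that every invocation of~\eqref{assumption:stab} or of local Lipschitz continuity requires a uniform \textsl{a~priori} bound on the iterates; this is precisely furnished by the norm-contraction analogue of Lemma~\ref{cor:zarantonello:termination} mentioned above, which allows all relevant constants to be taken as $\Cstab[\tau]$ and $L[\tau]$ and reduces the argument to a routine adaptation of the globally Lipschitz case.
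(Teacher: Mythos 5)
Your two-scenario split matches the paper's two cases, and your Scenario~(b) argument is essentially the paper's: norm contraction provides $\enorm{u_\lmax^k}\le 2M$ for large $k$, so the estimator-based part of criterion~\eqref{eq:stopp:alternative} must fail for all large $k$, whence $\lambda\,\eta_\lmax(u_\lmax^k) < \enorm{u_\lmax^k-u_\lmax^{k-1}}\to 0$; reliability and stability then finish. (The paper deduces $\eta_\lmax(u_\lmax^k)\to 0$ directly and never needs the intermediate conclusion $\eta_\lmax(u_\lmax^\exact)=0$, but that difference is cosmetic.)

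For Scenario~(a), however, you propose a genuinely different and substantially heavier route than the paper's. The paper invokes the \emph{generalized estimator reduction} (cf.\ \cite[Lemma~4.7]{axioms}):
\begin{align*}
\eta_{\ell+1}(u_{\ell+1}^\kmax)^2 \le \qest\,\eta_\ell(u_\ell^\kmax)^2 + \Cest\,\enorm{u_{\ell+1}^\kmax - u_\ell^\kmax}^2,
\end{align*}
where $0<\qest<1$ is supplied by reduction~\eqref{assumption:red} and D\"orfler marking, and the perturbation term vanishes by Lemma~\ref{lemma:inexact}. This gives $\eta_\ell(u_\ell^\kmax)\to 0$ immediately via the same elementary-calculus fact (cf.\ \cite[Corollary~4.8]{axioms}) already used in Lemma~\ref{lemma:inexact}, with no measure-theoretic work. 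You instead aim to prove $\eta_\ell(u_\ell^\exact)\to 0$ by a Morin--Siebert--Veeser split into eventually-refined and never-refined elements. That route is classical and more robust (it does not strictly require D\"orfler), but it adds nontrivial overhead: the measure argument on the eventually-refined region and the separate limit argument on the never-refined part. Also note that your sketch assigns the measure argument to the never-refined set $\TT_\ell^{(0)}$ and the geometric shrinkage to $\TT_\ell^{(+)}$; in the standard MSV proof the shrinking-measure argument concerns the eventually-refined region, while the never-refined part is handled via D\"orfler failing to mark it together with estimator stability against the limit. Since D\"orfler is assumed here, the single-inequality generalized estimator reduction is the cheaper and the paper's choice. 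A further small simplification the paper makes is to target $\eta_\ell(u_\ell^\kmax)$ directly rather than $\eta_\ell(u_\ell^\exact)$, which spares one extra stability transfer; your reduction-to-\eqref{eq:plan:goal} still works, but it is a slight detour.

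Your observation that all required uniform bounds for the stability and local Lipschitz constants can be obtained from norm contraction alone (without the energy contraction used in Lemma~\ref{cor:zarantonello:termination}) is correct and is exactly what the paper exploits via Corollary~\ref{cor1:zarantonello} and~\eqref{eq:normcrit}.
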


\begin{proof} The assertion $|(\ell, k)| \to \infty$ consists of two cases:
	
	{\bf{Case~1 ($\lmax = \infty$).}} 
Recall the generalized estimator reduction~\cite[Lemma~4.7]{axioms}: Let $\omega >0$. Given the D\"{o}rfler marking in Algorithm~\ref{algorithm:idealized}{\rm{(iii)}}, it follows that
 \begin{align}\label{eq:generalized_estimator_reduction}
 	\eta_{\ell+1}({u^{{\kmax}}_{\ell+1})}^2 \le \qest \, \eta_{\ell}(u_\ell^{\kmax})^2 + \Cest \, \enorm{u^{{\kmax}}_{\ell+1} - u_\ell^{\kmax}}^2,
\end{align}
where  $0 < \qest := (1 + \omega) \, \big[ \, 1 - (1-\qred^2) \, \theta \, \big] < 1$ and $\Cest := (1 + \omega^{-1}) \, \Cstab[4M]^2$ with $\omega > 0$ being sufficiently small and where $4M$ stems from nested iteration~\eqref{crucial:nestediteration}. From Lemma~\ref{lemma:inexact}, we infer that $\enorm{u^{{\kmax}}_{\ell+1} - u_\ell^\kmax} \to 0$ as $\ell \to \infty$. Hence, it follows from elementary calculus (cf.~\cite[Corollary~4.8]{axioms}) that $\eta_\ell(u^{\kmax}_\ell) \to 0$ as $\ell \to \infty$. Moreover, this and Lemma~\ref{lemma:inexact} prove that
	  \begin{align*}
		\enorm{u^\exact - u_\ell^\kmax}  & \eqreff{assumption:rel}{\le} \Crel \eta_\ell(u^\exact_\ell) +  \enorm{u^\exact_\ell - u^\kmax_\ell} \eqreff{assumption:stab}{\le} 
		\Crel \eta_\ell(u^\kmax_\ell) + (1 + \Crel \Cstab[3M])\,\enorm{u^\exact_\ell - u^\kmax_\ell} \\
		&\hspace{-10mm}\le 
		\Crel \eta_\ell(u^\kmax_\ell) + (1 + \Crel \Cstab[3M])\big[\enorm{u^\exact_\ell - u^\exact_\infty} + \enorm{u^\exact_\infty - u^\kmax_\ell} \big] \xrightarrow[]{\ell \to \infty} 0.
	\end{align*}
We conclude that $\enorm{u^\exact-u^\kmax_\ell} + \eta_\ell(u^\kmax_\ell) + \eta_\ell(u^\exact_\ell) \to 0$ as $\ell \to \infty$. Due to~\eqref{eq1:cor:zarantonello} together with Lemma~\ref{lemma:inexact} and for $\Crel' \coloneqq 1 + \Crel$, this yields for all $(\ell, k) \in \QQ$ that
\begin{align*}
\Delta_\ell^k  &\le \Crel' \eta_\ell(u^\exact_\ell)+ \big[1+\Cstab[3M]\big]\, \enorm{u^\exact_\ell - u^k_\ell} \eqreff{crucial:contractivity}{\le}  \Crel' \eta_\ell(u^\exact_\ell) + \big[1 + \Cstab[3M]\big]\, \enorm{u^\exact_\ell - u^0_\ell} \\
&\le  \Crel' \eta_\ell(u^\exact_\ell) + \big[ 1 + \Cstab[3M] \big]\, \big[\enorm{u^\exact_\ell - u^\exact_{\ell-1}}  + \enorm{u^\exact_{\ell-1} - u^\kmax_{\ell-1}}\big] \xrightarrow[]{\ell \to \infty} 0.
\end{align*}
 This concludes the proof of the first case.

	{\bf{Case~2 ($\lmax < \infty$ and $\kmax(\lmax) =\infty$).}}
	Since $\kmax(\lmax) = \infty$, at least one of the cases is met:
	\begin{align*}
		\#\set{k\in \N_0}{\enorm{u^k_{\lmax}} > 2M }   = \infty \quad \text{ or } \quad \#\set{k\in \N_0}{\lambda \eta_{\lmax}(u^k_{\lmax}) < \enorm{u^k_{\lmax} - u^{k-1}_{\lmax}}} = \infty.
	\end{align*}
Since norm contraction~\eqref{crucial:contractivity} holds, the arguments to obtain~\eqref{eq:normcrit} prove the existence of $k_0 \in \N$ such that, for all $ k \ge k_0$, it holds that
	\begin{align*}
		\enorm{u^k_{\lmax}} \le 2M.
	\end{align*}
We deduce from the (not met) stopping criterion in Algorithm~\ref{algorithm:idealized}{\eqref{eq:stopp:alternative} and~\eqref{crucial:contractivity}} that
	\begin{align*}
	\lambda \eta_{\lmax}(u^k_{\lmax}) \stackrel{{\eqref{eq:stopp:alternative}}}{<} \enorm{u^k_{\lmax} - u^{k-1}_{\lmax}} \xrightarrow[]{k \to \infty} 0.
	\end{align*}
With contraction~\eqref{crucial:contractivity}, we see that
\begin{align*}
\enorm{u^\exact - u_{\lmax}^k} \! \eqreff{assumption:rel}{\le}\! \Crel \eta_{\lmax}(u^\exact_{\lmax}) +  \enorm{u^\exact_{\lmax}-  u^k_{\lmax}}\! \eqreff{assumption:stab}{\le} \! \Crel \eta_{\lmax}(u^k_{\lmax})+ (1 + \Cstab[3M])\,\enorm{u^\exact_{\lmax} - u^k_{\lmax}} \xrightarrow[]{k \to \infty} 0.
\end{align*}
This concludes the proof of the second case and the proof is complete.
\end{proof}

The next corollary states that the exact solution $u^\exact = u^\exact_{\lmax}$ is discrete if $\lmax<\infty$. Moreover, if there exists $\ell$ with $\eta_{\ell}(u^\kmax_{\ell})=0$, then the exact solution $u^\exact$ coincides with $u^\kmax_{\ell}$.

\begin{corollary}\label{cor:earlystop}
	Under the assumptions of Theorem~\ref{prop:plain-convergence}, there hold the following implications:
	\begin{itemize} 
		\item[\rm (i)] If $\lmax = \sup \set{ \ell \in \N_0}{(\ell,0) \in \QQ} < \infty$, then  $u^\exact = u_{\lmax}^\exact$ and $\eta_\lmax(u^\exact_\lmax) = 0$.
		\item[\rm (ii)] If $\ell \in \N_0$ with $\kmax < \infty$ and $\eta_{\ell}(u^\kmax_{\ell}) = 0$, then $u^\kmax_{\ell} = u^\exact = u^\exact_{\ell}$.
		\end{itemize}
\end{corollary}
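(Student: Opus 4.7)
\textbf{Proof plan for Corollary~\ref{cor:earlystop}.} The strategy for (i) is to exploit two different convergence statements for the sequence $(u^k_\lmax)_{k \in \N_0}$ and to identify their limits. Since $\lmax < \infty$, the definition of $\lmax$ forces $\kmax(\lmax) = \infty$. Theorem~\ref{prop:plain-convergence} (Case~2) therefore yields $\Delta_\lmax^k \to 0$ as $k \to \infty$, which provides both $\enorm{u^\exact - u^k_\lmax} \to 0$ and $\eta_\lmax(u^k_\lmax) \to 0$. On the other hand, norm contraction~\eqref{crucial:contractivity} applied in $\XX_\lmax$ (equivalently, Lemma~\ref{lemma:inexact} with $\XX_\infty = \XX_\lmax$) gives $\enorm{u^\exact_\lmax - u^k_\lmax} \to 0$. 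By uniqueness of the limit in $\XX$, the two limits coincide, so $u^\exact = u^\exact_\lmax$. To conclude $\eta_\lmax(u^\exact_\lmax) = 0$, I would apply stability~\eqref{assumption:stab} with the common mesh $\TT_\fine = \TT_\coarse = \TT_\lmax$ and $v_\fine = u^\exact_\lmax$, $v_\coarse = u^k_\lmax$ (both eventually bounded, so admissible), and pass to the limit $k \to \infty$.

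The plan for (ii) is shorter and purely algebraic. Because $\kmax < \infty$, the modified stopping criterion~\eqref{eq:stopp:alternative} is satisfied at $k = \kmax$; combined with the assumption $\eta_\ell(u^\kmax_\ell) = 0$, this forces
\begin{equation*}
\enorm{u^\kmax_\ell - u^{\kmax-1}_\ell} \le \lambda \, \eta_\ell(u^\kmax_\ell) = 0,
\end{equation*}
hence $u^\kmax_\ell = u^{\kmax-1}_\ell$. Since $u^\kmax_\ell = \Phi_\ell(\delta; u^{\kmax-1}_\ell)$, this means $u^\kmax_\ell$ is a fixed point of the Zarantonello mapping $\Phi_\ell(\delta; \cdot)$. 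As noted after~\eqref{eq:zarantonello:lipschitz}, fixed points of $\Phi_\ell(\delta; \cdot)$ are precisely the solutions of~\eqref{eq:weakform:discrete}, so uniqueness (Proposition~\ref{prop:existence}) gives $u^\kmax_\ell = u^\exact_\ell$. Stability~\eqref{assumption:stab} with $v_\fine = v_\coarse = u^\kmax_\ell = u^\exact_\ell$ on $\TT_\ell$ then yields $\eta_\ell(u^\exact_\ell) = \eta_\ell(u^\kmax_\ell) = 0$, and reliability~\eqref{assumption:rel} finally gives $\enorm{u^\exact - u^\exact_\ell} \le \Crel \, \eta_\ell(u^\exact_\ell) = 0$.

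There is no real obstacle here since the work has been done in the preceding sections. The only subtle point is the matching-of-limits step in~(i): one must recognize that plain convergence identifies $u^\exact$ as the limit of $u^k_\lmax$, while the solver analysis identifies $u^\exact_\lmax$ as the same limit, so both identities follow at once without invoking reliability for the identity $u^\exact = u^\exact_\lmax$ (reliability would only give the trivial bound $\enorm{u^\exact - u^\exact_\lmax} \le \Crel \, \eta_\lmax(u^\exact_\lmax)$ which by itself does not conclude $\eta_\lmax(u^\exact_\lmax) = 0$).
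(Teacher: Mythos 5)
Your proof is correct and follows essentially the same path as the paper: for (i) you match the two limits of $(u^k_{\lmax})_k$ coming from plain convergence and from norm contraction, then use stability to pass the vanishing of $\eta_\lmax(u^k_\lmax)$ to $\eta_\lmax(u^\exact_\lmax)$; for (ii) the stopping criterion forces $u^\kmax_\ell = u^{\kmax-1}_\ell$, whence $u^\kmax_\ell$ is the unique fixed point $u^\exact_\ell$, and reliability closes the argument. The only cosmetic difference is your invocation of stability to get $\eta_\ell(u^\exact_\ell) = \eta_\ell(u^\kmax_\ell)$ in (ii), which is superfluous since $u^\exact_\ell$ and $u^\kmax_\ell$ are the same element of $\XX_\ell$.
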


\begin{proof} 
	{\bf (i). }According to Theorem~\ref{prop:plain-convergence}, it holds that 
	\begin{align*} 
	\Delta_{\lmax}^k = \enorm{u^\exact - u_{\lmax}^k} + \eta_\lmax(u^k_\lmax)   \to 0
	\quad \text{ as }\quad  k \to \infty.
	\end{align*}
	Norm contraction~\eqref{crucial:contractivity} proves that 
	\begin{align*}
	\enorm{u_{\underline\ell}^\exact - u_{\lmax}^k}
	\le \qN^k \, \enorm{u_{\lmax}^\exact - u_{\lmax}^0}
	\to 0
	\quad \text{ as }\quad k \to \infty.
	\end{align*}
	Uniqueness of the limit yields that $u^\exact=u_{\lmax}^\exact$. With stability~\eqref{assumption:stab}, we obtain that
	\begin{align*}
 	0 \le \eta_\lmax(u^\exact_\lmax) \le \eta_\lmax(u^k_\lmax) + \Cstab[3M] \, \enorm{u^\exact_\lmax - u^k_\lmax} \to 0 \quad \text{ as } \quad k \to \infty.
	\end{align*}
	This concludes the proof of {\rm (i)}.
	
	{\bf (ii). }Note that the stopping criterion in Algorithm~\ref{algorithm:idealized}{\eqref{eq:stopp:alternative}} implies that $ \enorm{u_{\ell}^{\kmax}-u_{\ell}^{\kmax-1}} \le \lambda \,\eta_{\ell}(u_{\ell}^\kmax)=0$ by assumption. Thus, $u^{\kmax}_\ell = u^{\kmax-1}_\ell$. This implies that $u^{\kmax-1}_\ell$ is a fixed point of $\Phi_\ell(\delta; \cdot)$. Since the fixed point is unique, we infer that $u^{\kmax}_\ell = u^{\kmax-1}_\ell = u^\exact_\ell$. With reliability~\eqref{assumption:rel}, we thus obtain that
	\begin{align*}
		\enorm{u^\exact - u_{\ell}^\exact} \eqreff{assumption:rel}{\le} \Crel \, \eta_{\ell}(u^{\exact}_{\ell}) = \Crel \,\eta_{\ell}(u^{\kmax}_{\ell}) =0. 
	\end{align*}
	This concludes the proof.
\end{proof}

Plain convergence is required to obtain results proving weak convergence in the spirit of~\cite[Lemma~28]{bbimp2022}. This is pivotal for achieving quasi-orthogonality along the lines of~\cite[Lemma~29]{bbimp2022}, which can substitute~\eqref{eq:qo:abstract} in the proof of full linear convergence. Details are omitted.

\renewcommand*{\bibfont}{\scriptsize}
\setlength{\biblabelsep}{-5pt}
\printbibliography  

%%%%%%%%%%%%%%%%%%%%%%%%%%%%%%%%%%%%%%%%%%%%%%%%%%%%%%%%%%%%%%%%%%%%%%%%%%%%%%%%%%%

\end{document}